\newcommand{\Dav}{\mathsf{D}}
\newcommand{\s}{\mathsf{s}}
\newcommand{\e}{\mathsf{e}}
\newcommand{\N}{\mathbb{N}}
\newcommand{\Z}{\mathbb{Z}}
\newcommand{\F}{\mathbb{F}}
\newcommand{\vs}{\mathsf{v}}
\DeclareMathOperator{\ord}{ord}
\newtheorem{theorem}{Theorem}[section]
\newtheorem{lemma}[theorem]{Lemma}
\newtheorem{corollary}[theorem]{Corollary}
\newtheorem{proposition}[theorem]{Proposition}
\theoremstyle{definition}
\newtheorem{definition}[theorem]{Definition}
\numberwithin{equation}{section}
\begin{document}

\title[Multi-wise and constrained fully weighted Davenport constants]{Multi-wise and constrained fully weighted Davenport constants and interactions with coding theory}

\author{Luz E. Marchan \and  Oscar Ordaz \and Irene Santos \and Wolfgang A. Schmid}

\address{(L.E.M) Departamento de Matem\'aticas, Decanato de Ciencias y Tecnolog\'{i}as, Universidad Centroccidental Lisandro Alvarado, Barquisimeto, Venezuela}
\address{(O.O. \& I. S.) Escuela de Matem\'aticas y Laboratorio MoST, Centro ISYS, Facultad de Ciencias,
Universidad Central de Venezuela, Ap. 47567, Caracas 1041--A, Venezuela}
\address{(W.A.S.) Universit\'e Paris 13, Sorbonne Paris Cit\'e, LAGA, CNRS, UMR 7539, Universit\'e Paris 8, F-93430, Villetaneuse, France}

\email{luzelimarchan@gmail.com}
\email{oscarordaz55@gmail.com}
\email{iresantos@gmail.com}
\email{schmid@math.univ-paris13.fr}

\thanks{The research of O. Ordaz is supported by the Postgrado de la Facultad de Ciencias de la U.C.V., the CDCH project, and the Banco Central de Venezuela; the one of W.A. Schmid by the ANR project Caesar, project number ANR-12-BS01-0011.}

\subjclass[2010]{11B30, 11B75, 20K01, 94B05, 94B65, 51E22}

\keywords{finite abelian group, weighted subsum, zero-sum problem, Davenport constant, linear intersecting code, cap set}

\begin{abstract}
We consider two families of weighted zero-sum constants for finite abelian groups. For a finite abelian group $( G , + )$, a set of weights $W \subset \mathbb{Z}$, and an integral parameter $m$, the $m$-wise Davenport constant with weights $W$ is the smallest integer $n$ such that each sequence over $G$ of length $n$ has at least $m$ disjoint zero-subsums with weights $W$.  And, for an integral parameter $d$, the $d$-constrained  Davenport constant with weights $W$ is the smallest $n$ such that each sequence over $G$ of length $n$ has a zero-subsum with weights $W$ of size at most $d$. First, we establish a link between these two types of constants and several basic and general results on them. Then, for elementary $p$-groups, establishing a link between our constants and the parameters of linear codes as well as the cardinality of cap sets in certain projective spaces, we obtain various explicit results on the values of these constants.
\end{abstract}

\maketitle

\section{Introduction}

For a finite abelian group $(G,+)$ the Davenport constant of $G$ is the smallest $n$ such that each sequence $g_1 \dots g_n$ over $G$ has a non-empty subsequence the sum of whose terms is $0$. This is a classical example of a zero-sum constant over a finite abelian group, and numerous related invariants have been studied in the literature. We refer to \cite{gaogersurvey} for a survey of the subject.

An example are the multi-wise  Davenport constants: the $m$-wise Davenport constant, for some integral parameter $m$, is defined like the Davenport constant yet instead of asking for \emph{one} non-empty subsequence with sum $0$ one asks for $m$ disjoint non-empty subsequences with sum $0$. 
These constants were first considered by Halter-Koch \cite{HK} due to their relevance in a  quantitative problem of non-unique factorization theory. Delorme, Quiroz and the second author \cite{DOQ} high-lighted the relevance of these constants when using the inductive method to determine the Davenport constant itself.

Another variant are constants defined in the same way as the Davenport constant, yet imposing a constraint on the length of the subsequence whose sum is $0$.
A classical example are generalizations of the well-known Erd\H{o}s--Ginzburg--Ziv theorem, where one seeks zero-sum subsequences of lengths equal to the exponent of the group or also equal to the order of the group.
Another common constraint is to impose an upper bound on the length, for example again the exponent of the group, yielding the $\eta$-invariant of the group. Here, we refer to these constants as constrained Davenport constants, more specifically the $d$-constrained  Davenport constant is the constant that arises when one asks for the existence of a non-empty zero-sum subsequence of length at most $d$. There are numerous contributions to this problem and we refer to \cite[Section 6]{gaogersurvey} for an overview. 

In addition to these classical zero-sum constants, in recent years there was considerable interest in weighted versions of these constants. There are several ways to introduce weights in such problems. One that received a lot of interest lately is due to Adhikari et al. (see \cite{adetal,adhi0,thanga-paper,YZ} for some  contributions, and \cite{ZY} for a more general notion of weights)  where for a given set of weights $W \subset \Z$ one asks for the smallest $n$ such that a sequence $g_1 \dots g_n$ over $G$ has a $W$-weighted subsum that equals $0$, that is there exists a subsequence $g_{i_1}\dots g_{i_k}$ and $w_j \in W$ such that $\sum_{j=1}^k w_j g_{i_j}=0$, yielding the $W$-weighted Davenport constant of $G$. And, analogously, one defines the $m$-wise $W$-weighted Davenport constant and the $d$-constrained $W$-weighted Davenport (see Definition \ref{def_main} for a more formal definition).  We refer to \cite{grynk_book} for an overview on weighted zero-sum problems, including a more general notion of weights, and to \cite{halterkoch} for an arithmetical application of a weighted Davenport constant. 

In the present paper we investigate multi-wise weighted Davenport constants and constrained weighted Davenport constants. We obtain some results for general sets of weights $W$ and general finite abelian groups, but our focus is on elementary $p$-groups and on the case that the set of weights is ``full,'' that is it contains all integers except multiples of the exponent of the groups, which is the largest set of weights for which the problem is non-trivial (see Section \ref{sec_prel} for details). This work builds on and generalizes earlier work on the classical versions of these constants  by Cohen and Z{\'e}mor \cite{CZ},  Freeze and the last author \cite{FS}, and Plagne and the last author \cite{PS} for elementary $2$-groups; in particular we establish that the problems can be linked to problems in coding theory, as was known in the case of elementary $2$-groups.

Moreover, for the case of elementary $3$-groups the problem coincides with the plus-minus weighted  problem, that is the problem for sets of weights $\{+1, -1\}$. Recently, constrained Davenport constants were investigated in this case by Godhino, Lemos, and Marques \cite{GLM}, and we improve several of their results.

The organization of the paper is as follows. After recalling some standard terminology, we recall in Section \ref{sec_general} the definitions of the key invariants for this paper, and prove some general results on them.
In particular, we show that for arbitrary sets of weights the multi-wise weighted Davenport constants are eventually arithmetic progressions. This generalizes a result of Freeze and the last author \cite{FS} for the classical case.

We then focus on the fully-weighted case. First, in Section \ref{sec_codesgeneral}, we explain the link to coding theory in a general way. The crucial difference to earlier works such as \cite{CZ, GT, PS} is that here we are not restricted to binary linear codes and elementary $2$-groups, but more generally can consider $p$-ary linear codes for some prime $p$ and elementary $p$-groups (we deviate from the more usual convention to talk about $q$-ary codes, since on the one hand we only consider primes not prime powers and on the other hand to stay in line with the common usage for groups). We use this link in two different ways. In Section \ref{sec_exactvalue} we use known results on the optimal minimal distance of codes of small dimension and length to obtain the exact values of or good bounds for the constrained Davenport constants of some elementary $p$-groups for small exponent and rank. In that section, among other results, we use the fact that linear codes of minimal distance four are closely linked to cap sets in projective spaces. In particular, we obtain a characterization of the $3$-constrained fully-weighted Davenport constants of elementary $p$-groups in terms of the maximal cardinality of cap sets in \emph{projective} spaces over the fields with $p$ elements. For context, we recall that a link between a certain classical zero-sum problem and the cardinality of caps in \emph{affine} spaces was known (see \cite{edeletal}). In the other direction, in Section \ref{sec_asymptotic}, we use asymptotic bounds on the parameters of codes to obtain asymptotic bounds on our constants. In that section we also obtain some lower bounds for our constants that can be interpreted as existence-proofs for a certain type of codes related to the notion of intersecting codes introduced by Cohen and Lempel \cite{CL}.
Moreover, in Section \ref{sec_allmultiwise} we determine all the fully-weighted multi-wise Davenport constants for elementary $p$-groups of rank at most $2$, and for $C_3^3$ (for the values of the multi-wise Davenport constant in the classical case for these groups see \cite{HK} and \cite{BSP}, respectively). The last example shows an interesting additional phenomenon and  illustrates the difficulty of obtaining more general results.

\section{Preliminaries}
\label{sec_prel}

We recall some standard terminology and notation. We denote by $\N$ the positive integers and we set $\N_0=\N \cup \{0\}$. We denote by $\log$ the natural logarithm and by $\log_b$ the logarithm to the base $b$. For abelian groups, we use additive notation and we denote the neutral element by $0$. For $n \in \N$ let $C_n$ denote a cyclic group of order $n$. For $p$ a prime number we denote by $\F_p$ the field with $p$ elements. For each finite abelian group there exist uniquely determined $1 < n_1 \mid \dots \mid n_r$ such that $G \cong C_{n_1} \oplus \dots  \oplus C_{n_r}$.
One calls $n_r$ the exponent of $G$, and it is denoted by $\exp(G)$; moreover $r$ is called the rank of $G$. We say that $G$ is a $p$-group if its exponent is a prime power and we say that $G$ is an elementary $p$-group if the exponent is a prime (except for the trivial group).

We recall that a (finite) abelian group is a $\Z$-module, and also a $\Z / \exp(G) \Z$ module. In particular, an elementary $p$-group is a vector space over the field with cardinality $\exp(G)$ (except for the trivial group).

A family $(e_1 , \dots , e_k)$ of non-zero elements of a finite abelian group is called independent if $\sum_{i=1}^k a_i e_i = 0$ with $a_i \in \Z$ implies that $a_ie_i = 0$ for each $i$. For an elementary $p$-group, a family of non-zero elements is independent in this sense if and only if it is linearly independent when considering the group as vector space in the way given above. We call an independent generating set of non-zero elements a basis; in the case of elementary $p$-groups this notion of basis coincides with the usual one for vector spaces.

A key notion of this paper are sequences. We recall some terminology and notation. A sequence over $G_0$, a subset of a finite abelian group $G$, is an element of $\mathcal{F}(G_0)$, the free abelian monoid over $G_0$. We use  multiplicative notation.  This means that a sequence $S$ can be written uniquely as $\prod_{g \in G_0} g^{v_g}$ with $v_g \in \N_0$, or uniquely except for ordering as $S= g_1 \dots g_s$ where $g_i \in G_0$ and repetition of elements can occur. We denote by $|S|=s$ the length of the sequence, by $\sigma(S) = \sum_{i=1}^s g_i$ the sum of $S$, and by $\vs_g(S)= v_g$ the multiplicity of $g$ in $S$. Formally, a subsequence of $S$ is a divisor $T$ of $S$ in $\mathcal{F}(G_0)$, that is $T=\prod_{i \in I}g_i$ for some $I \subset \{1, \dots, s\}$, which thus matches the notion of subsequence used in other contexts.

We call subsequences $T_1, \dots, T_t$ of $S$ disjoint if $ \prod_{j=1}^t T_j \mid S$. This can be expressed as saying that there are pairwise disjoint subsets $I_j \subset \{1, \dots , s \}$, for $j \in \{1, \dots, t\}$, such that   $T_j=\prod_{i \in I_j}g_i$.  However, it should be noted that disjoint subsequences of a sequence can have elements in common; this can happen in case they appear with multiplicity greater than one in the original sequence. 

A sequence is called squarefree if no element appears with multiplicity greater than $1$. These sequences could be identified with sets; however, it is sometimes advantageous to keep the notions separate.

A sequence over $G_0$ is called a zero-sum sequence if its sum is $0 \in G$, and a zero-sum sequence is called a minimal zero-sum sequence if it is non-empty and does not have a proper and non-empty subsequence that is a zero-sum sequence.  We denote the set of all minimal zero-sum sequences over $G_0$ by $\mathcal{A} (G_0)$.

For a subset $W \subset \Z$ a $W$-weighted sum of $S$ is an element of the form $\sum_{i=1}^s w_i g_i$ with $w_i \in W$. We denote by $\sigma_W(S)$ the set of all $W$-weighted sums of $S$.  This notion extends the notion of sum in the classical case; indeed, $\sigma_{ \{ 1 \} }(S) = \{ \sigma(S) \}$. 
A $W$-weighted subsum of $S$ is a $W$-weighted sum of a non-empty subsequence of $S$; we choose to exclude the empty sequence, since  this is more convenient in general (in the rare cases we admit the empty sequence we mention it locally). The length of a subsum is just the length of the respective subsequence.  A $W$-weighted zero-sum (or zero-subsum) is merely a $W$-weighted sum (or subsum) whose value is $0 \in G$. 

Of course, the $W$-weighted sums of a sequence $S$ depend only on the image of $W$ under the standard map to $\mathbb{Z}/ \exp(G) \mathbb{Z}$. We thus could restrict to considering sets of weights contained in $\{0, 1, \dots, \exp(G)-1\}$.
The problems we study become trivial if $W$ contains $0$ or more generally a multiple of $\exp(G)$. Thus, 
we call a set of weights $W$ trivial with respect to $\exp(G)$ if it contains a multiple of $\exp(G)$.

As mentioned in the introduction in later sections we focus on the problem for the ``full'' set of weights $\mathbb{Z}\setminus  \exp(G) \mathbb{Z}$ or equivalently $\{1, \dots, \exp(G)-1\}$ that is the largest set of weights that is not trivial; we reserve the letter $A$ for this set of weights. Since in these investigations at least the exponent of the groups under consideration will always be clear and essentially fixed, no confusion should arise from the fact that the set $A$ depends implicitly on the exponent of the group.  

Moreover, we call a set of weights $W$ multiplicatively closed modulo  $\exp(G)$ if the image of $W$ under the standard map from $\mathbb{Z}$ to $\mathbb{Z}/ \exp(G) \mathbb{Z}$ is multiplicatively closed.

\section{Main definitions and general results}
\label{sec_general}

We begin by stating in a more formal and in part more general way the definition of the weighted versions of the constrained and multi-wise Davenport constants. For a discussion of earlier appearances of the $m$-wise Davenport constant we refer to the introduction;  in \cite{congruence} a definition of $\s_{W, L} (G)$ without weights was given.  

\begin{definition}
\label{def_main}
Let $G$ be a finite abelian group. Let $W \subset \Z$ be a non-empty subset.
\begin{enumerate}
\item For a non-empty set $L \subset \N$, the $L$-constrained $W$-weighted Davenport constant of $G$, denoted $\s_{W, L} (G)$, is the smallest $n \in \N \cup \{ \infty \}$ such that each $S \in \mathcal{F}(G)$ with $|S|\ge n $  has a $W$-weighted zero-subsum of length in $L$. For the special case that $ L = \{1, \dots, d\}$, we denote the constant by $\s_{W, \le d} (G)$ and call it the $d$-constrained $W$-weighted Davenport constant.
\item  For $m \in \N$, the $m$-wise $W$-weighted Davenport constant of $G$, denoted $\Dav_{W, m} (G)$, is the smallest $n \in \N \cup \{ \infty \}$ such that each $S \in \mathcal{F}(G)$ with $|S|\ge n $  has at least $m$ disjoint  $W$-weighted non-empty zero-subsums.
\end{enumerate}
\end{definition}
It is easy to see that  $\Dav_{W, m} (G)$ is in fact always finite; for a characterization of the finiteness of $\s_{W, \le d} (G)$ see Lemma \ref{lem_swd_gen}.

Sometimes we use simplified versions of this notation for common special cases.   
When no set of weights is indicated, we mean the set of weights $W = \{1\}$. Moreover,  $\Dav_{W}(G)= \Dav_{W, 1}(G)$. For some further conventions see the discussion before Lemma \ref{lem_lb_general}.

We make some simple observations how these constants depend on the parameters; we continue to use the just introduce notations. Furthermore, let $H \subset G$ be a subgroup, let $W' \subset W$, $L' \subset L$, and let $m' \le m$ a positive integer.
Then 
\begin{itemize}
\item $\Dav_{W,m}(H) \le \Dav_{W,m}(G)$ and  $\s_{W, L} (H) \le \s_{W, L} (G)$.  
\item $\Dav_{W,m}(G) \le \Dav_{W',m}(G)$ and  $\s_{W, L} (G) \le \s_{W', L} (G)$.  
\item $\Dav_{W,m'}(G) \le \Dav_{W,m}(G)$ and  $\s_{W, L} (G) \le \s_{W, L'} (G)$.    
\end{itemize}

Some of the results below allow to refine the inequalities given above. The following results show that multi-wise weighted Davenport constants are eventually arithmetic progressions. In the classical case it is known that they are eventually arithmetic progressions with difference equal to the exponent of the group (see \cite[Lemma 5.1]{FS}); in the presence of weights the difference depends on the set of weights and the exponent of the group, in a form we make precise below. We mention that a somewhat similar phenomenon occurs in recent investigations  of a quantity related to the $m$-wise Davenport constant for  non-commutative finite groups \cite{hungary}.

\begin{theorem}
\label{thm_arithprogr}
Let $G$ be a finite abelian group. Let $W \subset \mathbb{Z}$ be a set of weights, and let $\overline{W}$ denote its image under the standard map from $\mathbb{Z}$ to $\mathbb{Z}/ \exp(G) \mathbb{Z}$. Then $(\Dav_{W,m}(G))_{m \in \mathbb{N}}$ is eventually an arithmetic progression with difference $\min \{|U| \colon U \in \mathcal{A}(\overline{W})\}$.
\end{theorem}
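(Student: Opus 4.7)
The plan is to prove the theorem by establishing matching upper and lower bounds on the increments $\Dav_{W, m+1}(G) - \Dav_{W, m}(G)$, both equal to $d := \min \{|U| \colon U \in \mathcal{A}(\overline W)\}$ for $m$ sufficiently large; this is the natural weighted generalization of \cite[Lemma~5.1]{FS}, with $d$ replacing $\exp(G)$. The starting observation is that, for any $g \in G$ and any $\bar w_1 \cdots \bar w_d \in \mathcal{A}(\overline W)$ with lifts $w_i \in W$, we have $\sum_{i=1}^d w_i \equiv 0 \pmod{\exp(G)}$, so $\sum_{i=1}^d w_i g = 0$; the block $g^d$ therefore always carries a $W$-weighted zero-subsum of length exactly $d$, while by minimality of $d$ no block $g^j$ with $j < d$ admits any nontrivial $W$-weighted zero-subsum.

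For the upper bound $\Dav_{W, m+1}(G) \leq \Dav_{W, m}(G) + d$ I would argue by pigeonhole. Once $m$ is large enough that $\Dav_{W, m}(G) + d > (d-1)|G|$ (which eventually holds since $\Dav_{W, m}(G) \to \infty$), every sequence $S$ with $|S| \geq \Dav_{W, m}(G) + d$ has some $g \in G$ with $\vs_g(S) \geq d$. Extract the zero-subsum given by $g^d$; the remainder has length at least $\Dav_{W, m}(G)$ and hence admits $m$ further disjoint $W$-weighted zero-subsums, for $m+1$ disjoint ones in total.

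For the lower bound $\Dav_{W, m+1}(G) \geq \Dav_{W, m}(G) + d$ I would construct a witnessing sequence. Given an extremal sequence $S_m$ of length $\Dav_{W, m}(G) - 1$ with at most $m - 1$ disjoint $W$-weighted zero-subsums, let $H$ be the subgroup generated by the distinct elements of $S_m$, and pick $g \in G$ with $\ord(g) = \exp(G)$ and $\langle g \rangle \cap H = \{0\}$. Consider $S_m \cdot g^d$, a sequence of length $\Dav_{W, m}(G) + d - 1$. Any $W$-weighted zero-subsum using $0 < j < d$ copies of $g$ would force an identity $t = -ag$ for some nonempty $W$-weighted subsum $t$ of $S_m$ and some $a \not\equiv 0 \pmod{\exp(G)}$ (the latter by minimality of $d$), contradicting $t \in H$ together with $-ag \in \langle g \rangle \setminus \{0\}$. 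Hence every disjoint zero-subsum of $S_m \cdot g^d$ either lies inside $S_m$ (at most $m - 1$ of those) or uses all $d$ copies of $g$ (at most one), giving at most $m$ disjoint $W$-weighted zero-subsums in total.

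The main obstacle is securing the transversality in the lower bound, namely the existence of $g \in G$ with $\ord(g) = \exp(G)$ and $\langle g \rangle \cap H = \{0\}$, equivalently $\exp(G/H) = \exp(G)$. This is automatic when every subgroup of $G$ is a direct summand, in particular for elementary $p$-groups, but can fail for general $G$ if the support of the extremal $S_m$ spans too much of $G$. Circumventing this requires either allowing a slightly suboptimal $S_m$ whose support avoids a cyclic factor of order $\exp(G)$ (at the cost of the arithmetic progression only starting from some $m \geq M$ rather than from $m = 1$), or an averaging argument over candidate $g$'s to show a good choice always exists. Combined with the upper bound, this yields $\Dav_{W, m+1}(G) = \Dav_{W, m}(G) + d$ for all sufficiently large $m$, which is the claimed eventual arithmetic progression.
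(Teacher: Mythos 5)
Your upper-bound half is exactly the paper's argument: once $m$ is large enough that $\Dav_{W,m}(G)+d$ exceeds $(d-1)|G|$, pigeonhole produces a block $g^{d}$ carrying a $W$-weighted zero-subsum of length $d$ (this is Lemma \ref{lem_swd_gen} combined with Lemma \ref{lem_ub_rec}), giving $\Dav_{W,m+1}(G)\le \Dav_{W,m}(G)+d$ for all large $m$. The divergence, and the genuine gap, is in your lower-bound half. You try to prove the exact increment inequality $\Dav_{W,m+1}(G)\ge \Dav_{W,m}(G)+d$ by appending $g^{d}$ to an extremal sequence $S_m$, and this requires an element $g$ with $\ord(g)=\exp(G)$ and $\langle g\rangle\cap H=\{0\}$, where $H$ is generated by the support of $S_m$. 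Such a $g$ will typically not exist: by the same kind of reasoning as in Lemma \ref{lem_Dadd}, extremal sequences for $\Dav_{W,m}$ generally generate all of $G$, so $H=G$. Your two proposed repairs do not obviously close this. Replacing $S_m$ by a suboptimal sequence supported on a proper subgroup $H'$ only yields a lower bound in terms of $\Dav_{W,m}(H')$, not $\Dav_{W,m}(G)$; and no averaging argument is actually given. The paper itself signals that an effective statement of this kind (which is essentially what your increment lower bound would deliver) ``should be somewhat lengthy,'' and it deliberately avoids proving it.

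The paper closes the argument by a softer route that you should adopt: it never bounds the increments from below. It only uses the easy global bound $\Dav_{W,m}(G)\ge md$ (Lemma \ref{lem_lb_general}, witnessed by $g^{md-1}$ for $g$ of order $\exp(G)$, whose zero-subsums all have length at least $d$), and then observes that the eventual upper bound on increments makes the integer sequence $\bigl(\Dav_{W,m}(G)-md\bigr)_{m\ge m_0}$ non-increasing, while the global bound makes it non-negative; hence it is eventually constant, which is precisely the claimed eventual arithmetic progression. This replaces your problematic construction entirely and is why the paper's proof is non-constructive about where the progression starts. Keep your upper bound, discard the transversality construction, and substitute the monotone-plus-bounded argument.
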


We point out that the proof of this result is not constructive, in the sense that it does not yield an actual upper bound on an $M$ such that  $(\Dav_{W,m}(G))_{m \ge M}$ is an arithmetic progression. This could likely be overcome in a similar way as in the classical case (see \cite[Proposition 6.2]{FS}). However, the argument should be somewhat lengthy and the bound very weak. Moreover, in Corollary \ref{cor_mWG} we see that in many cases of interest a not too bad bound can be obtained in another way. Thus, we do not pursue the question of obtaining a general explicit bound.

We split the proof of the result into several lemmas that are also useful in their own right. In all the results below let $G$ be a finite abelian group, let $W \subset \mathbb{Z}$ be a set of weights, and let $\overline{W}$ denote its image under the standard map to $\mathbb{Z}/ \exp(G) \mathbb{Z}$. Furthermore, let $\e_W(G)= \min \{|U| \colon U \in \mathcal{A}(\overline{W})\}$. This notation is chosen for this specific context only, to stress the role $\min \{|U| \colon U \in \mathcal{A}(\overline{W})\}$ plays in our context; it is a parameter to describe $W$-weighted zero-sum constants of $G$. 
The quantity  $\min \{|U| \colon U \in \mathcal{A}(H_0)\}$ for $H_0$ a subset of a finite abelian group $H$  also comes up in other contexts (see below).  

We make an additional definition that allows to phrase some results on the multi-wise Davenport constants in a concise way; the definition makes sense in view of the result above. 

\begin{definition}
\label{def_D0}
Let $G$, $W$, $\overline{W}$, and $\e_W(G)$ as above.
Then, let $\Dav_{W,0}(G)$ denote the integer such that $\Dav_{W,m}(G) = \Dav_{W,0}(G)+ m\e_W(G) $ for all sufficiently large $m$. Moreover, let $m_W(G)$ denote the minimal integer such that $\Dav_{W,m}(G) = \Dav_{W,0}(G)+ m\e_W(G) $ for each $m \ge m_W(G)$.
\end{definition}

We start by collecting some simple remarks on $\e_W(G)$. 
We have $\e_W(G)\le \exp(G)$. This follows from the fact that the Davenport constant of $\Z / \exp(G)\Z$ is $\exp(G)$.  Equality holds if and only if modulo $\exp(G)$ the set $W$ contains only a single element that generates $\Z / \exp(G)\Z$, that is, in the classical case. 
Moreover, in the plus-minus weighted case, that is $W= \{+1,-1\}$, as well as in the fully weighted case, that is $W= \{1, \dots, \exp(G)-1\}$, we have $\e_W(G) = 2$. 
Of course, there is a large variety of other possible values for $\e_W(G)$ in general, for example 
$\e_{ \{1, 2\}} (G)= \lceil \exp(G) / 2 \rceil$. Indeed, we recall that the problem of determining the minimal cross number, a notion similar to that of length, of minimal zero-sum sequences plays an important role in investigations on the elasticity (see \cite{CS_ZS}); for elementary $p$-groups the problems in fact coincide (up to a scaling constant) and we refer to \cite{CS_rend} for recent investigations related to this problem.  
      
Now, we investigate under which conditions on $d$ we have that $\s_{W, \le d}(G)$ is finite; the bound we give in case it is finite is rather crude, and  mainly given for definiteness. The actual value is investigated in latter sections in certain cases.

\begin{lemma}
\label{lem_swd_gen}
\
\begin{enumerate}
\item We have $\s_{W, \le d}(G) = \infty$ for $d < \e_W(G)$ and $\s_{W, \le d}(G)  \le (\e_W(G)-1) |G|+1$ for $d \ge \e_W(G)$.
\item We have $\s_{W, \le \Dav_W(G)} (G) = \Dav_W(G)$.
\end{enumerate}
\end{lemma}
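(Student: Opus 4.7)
For part 1, my plan is to handle the two inequalities essentially independently. For the lower bound $\s_{W,\le d}(G) = \infty$ when $d < \e_W(G)$, the natural candidate for a long "bad" sequence is $g^n$ for $g \in G$ of order $\exp(G)$ and arbitrary $n$. A $W$-weighted subsum of $g^n$ of length $k$ has the form $(w_1 + \dots + w_k)g$ with $w_i \in W$, and since $g$ has order $\exp(G)$ this vanishes in $G$ precisely when the sequence $\overline{w_1}\dots\overline{w_k}$ is a zero-sum sequence over $\overline{W} \subset \Z/\exp(G)\Z$. Every such zero-sum sequence contains a minimal one (in the sense of $\mathcal{A}(\overline{W})$) and hence has length at least $\e_W(G)$; so no subsum of length $\le d < \e_W(G)$ is zero, showing $\s_{W,\le d}(G) = \infty$.

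For the upper bound when $d \ge \e_W(G)$, I will use the pigeonhole principle: if $|S| \ge (\e_W(G)-1)|G|+1$, then some $g \in G$ appears at least $\e_W(G)$ times in $S$. Fix any minimal zero-sum sequence $\overline{w_1}\dots\overline{w_{\e_W(G)}} \in \mathcal{A}(\overline{W})$ achieving the minimum length, and choose lifts $w_i \in W$. Then $\sum_{i=1}^{\e_W(G)} w_i \equiv 0 \pmod{\exp(G)}$, so $\sum_{i=1}^{\e_W(G)} w_i g = 0$ in $G$; this yields a $W$-weighted zero-subsum of $S$ of length $\e_W(G) \le d$, as required.

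For part 2, the argument is essentially tautological. The inequality $\s_{W,\le \Dav_W(G)}(G) \ge \Dav_W(G)$ follows directly from the definition of $\Dav_W(G)$: there exists a sequence of length $\Dav_W(G)-1$ with no $W$-weighted zero-subsum at all, hence in particular none of length $\le \Dav_W(G)$. For the reverse, given any $S$ with $|S| \ge \Dav_W(G)$, pick any subsequence $T \mid S$ with $|T| = \Dav_W(G)$; by definition of $\Dav_W(G)$ it has a $W$-weighted zero-subsum, whose length is at most $|T| = \Dav_W(G)$, and this is also a $W$-weighted zero-subsum of $S$.

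None of the steps present a real obstacle; the only point requiring slight care is the translation between $W$-weighted subsums in $G$ and zero-sum sequences over $\overline{W}$ in $\Z/\exp(G)\Z$, which is what makes the quantity $\e_W(G)$ the correct threshold in part 1. The bound $(\e_W(G)-1)|G|+1$ is almost certainly far from sharp, but is sufficient for the definiteness purpose stated before the lemma.
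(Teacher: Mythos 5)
Your proof is correct and follows essentially the same route as the paper's: the sequence $g^{\ell}$ over an element of maximal order, translated into zero-sum sequences over $\overline{W}$, for the infinitude when $d < \e_W(G)$; pigeonhole producing $h^{\e_W(G)}$ for the upper bound; and the tautological two inequalities for part 2. If anything, your write-up of part 2 is slightly more careful than the paper's (you explicitly pass to a subsequence of length exactly $\Dav_W(G)$ to control the length of the zero-subsum), and your implicit use of $\ord(h) \mid \exp(G)$ in the pigeonhole step is exactly the point the paper flags in passing.
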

\begin{proof}
1. Let $g \in G$ an element of order $\exp(G)$. We assert that for every $\ell \in \N$ the sequence $g^{\ell}$ does not have a nonempty $W$-weighted zero-subsum of length strictly less than $\e_W(G)$. 
We note that $\sum_{i=1}^r w_i g$ with $w_i \in W$ is $0$ if and only if $\sum_{i=1}^rw_i \equiv 0 \pmod{\exp(G)}$. In other words, $ \overline{w_1} \dots \overline{w_r} $ is a zero-sum sequence over $\Z/ \exp(G)\Z$. Thus, the minimal length of a nonempty $W$-weighted zero-sum only involving the group element $g$ is $\e_W(G)$, establishing our claim. Thus, $\s_{W, \le d}(G) = \infty$ for $d < \e_W(G)$.

Now, let $S$ be a sequence over $G$ of length at least $(\e_W(G)-1) |G|+1$. It contains a subsequence of the form $h^{\e_W(G)}$ for some $h \in G$.
Let $w_1, \dots, w_{\e_W(G)}$ such that their sum is $0$ modulo $\exp(G)$; the existence is guaranteed by the definition of $\e_W(G)$. Then, $\sum_{i=1}^{\e_W(G)}w_i h$ is a $W$-weighted zero-subsum of $h^{\exp(G)}$ and thus of $S$; note that the order of $h$ might not be equal to $\exp(G)$, yet it is always a divisor of $\exp(G)$ and this suffices. Thus, $\s_{W, \le d}(G)  \le (\e_W(G)-1) |G|+1$ for $d \ge \e_W(G)$.

2. Every sequence of length $\Dav_W(G)$ has a non-empty $W$-weighted zero-subsum, which of course has length at most $\Dav_W(G)$ and thus $\Dav_W(G) \le \s_{W, \le \Dav_W(G)} (G)$. The converse inequality is obvious.
\end{proof}

We point out that in the classical case, that is $W= \{1\}$, of course $\e_W(G)= \exp(G)$. The fact that this is the first value of $d$ for which the constant is finite in that case can be seen as one reason that  there is a particular focus on $\mathsf{s}_{W, \le \e_W(G)}(G)$, typically denoted $\eta_W(G)$ and also $\mathsf{s}_{W,\{\exp(G) \}}(G)$ typically denoted just $\mathsf{s}_W(G)$.    

\begin{lemma}
\label{lem_lb_general}
Let $d, \ell \in \mathbb{N}$ with $d \ge 2$ and  $\ell \le \s_{W,\le d-1}(G)$. Then $\Dav_{W, \lceil \ell/d  \rceil}(G) \ge \ell$.
In particular, for each $m \in \N$ we have $\Dav_{W, m}(G) \ge m \e_W(G)$.
\end{lemma}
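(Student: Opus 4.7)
The plan is to produce, for the given $\ell$ and $d$, a single sequence $S$ over $G$ of length $\ell - 1$ that witnesses $\Dav_{W,\lceil \ell/d \rceil}(G) > \ell - 1$. That is, a sequence admitting no collection of $\lceil \ell/d \rceil$ disjoint $W$-weighted non-empty zero-subsums. The whole argument is essentially a packing bound, and the only nontrivial content is making sure the definitions line up.

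First I would unpack the hypothesis through the definition of the constrained constant. Since $\ell \le \s_{W, \le d-1}(G)$ we have $\ell - 1 < \s_{W, \le d-1}(G)$, and hence by the minimality built into Definition \ref{def_main}(1) there exists a sequence $S$ over $G$ with $|S| = \ell - 1$ that possesses no $W$-weighted zero-subsum of length at most $d - 1$. Equivalently, every $W$-weighted non-empty zero-subsum of $S$ has length at least $d$.

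Next comes the packing step. If $T_1, \dots, T_k$ are disjoint $W$-weighted non-empty zero-subsums of $S$, then the disjointness condition as recalled in Section \ref{sec_prel} forces $\sum_{j=1}^k |T_j| \le |S| = \ell - 1$, and the previous paragraph gives $|T_j| \ge d$ for each $j$. Hence $kd \le \ell - 1$, so $k \le (\ell - 1)/d < \ell/d \le \lceil \ell/d \rceil$, and because $k$ and $\lceil \ell/d \rceil$ are integers this forces $k \le \lceil \ell/d \rceil - 1$. Thus $S$ does not carry $\lceil \ell/d \rceil$ disjoint $W$-weighted non-empty zero-subsums, which is exactly the required lower bound on $\Dav_{W,\lceil \ell/d \rceil}(G)$.

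For the ``in particular'' clause I would specialize to $d = \e_W(G)$, which we may assume is at least $2$ (otherwise $W$ is trivial modulo $\exp(G)$ and the inequality is vacuous). Lemma \ref{lem_swd_gen}(1) yields $\s_{W, \le d-1}(G) = \infty$, so the hypothesis $\ell \le \s_{W, \le d-1}(G)$ is automatic; taking $\ell = m \e_W(G)$ gives $\lceil \ell/d \rceil = m$ and therefore $\Dav_{W,m}(G) \ge m \e_W(G)$. I do not foresee any real obstacle: the only conceptual point is reading off the existence of the bad sequence $S$ from the hypothesis on $\s_{W, \le d-1}(G)$, after which the proof reduces to a clean pigeonhole estimate.
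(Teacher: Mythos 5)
Your proof is correct and follows essentially the same route as the paper's: pick a sequence of length $\ell-1$ with no $W$-weighted zero-subsum of length at most $d-1$, observe every zero-subsum then has length at least $d$, and conclude by the packing bound $\lceil \ell/d\rceil d \ge \ell > \ell-1$; the ``in particular'' part is likewise obtained by specializing to $d=\e_W(G)$ via Lemma \ref{lem_swd_gen}. If anything, you are slightly more explicit than the paper about why such an extremal sequence of length $\ell-1$ exists.
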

\begin{proof}
Let $S$ be a sequence of length $\ell - 1$. Since $|S| < \s_{W,\le d-1}(G)$, it follows that each $W$-weighted zero-subsum of $S$ has length at least $d$. Since $\lceil \ell/d  \rceil d \ge \ell > |S|$, it follows that $S$ cannot have $\lceil \ell/d  \rceil$ disjoint $W$-weighted zero-subsums. Thus, $\Dav_{\lceil \ell/d  \rceil}(G) > |S|$, establishing the first part of the result. 

To see the second part, we recall from Lemma \ref{lem_swd_gen} that  $\s_{W,\le \e_W(G)-1}(G) = \infty$. Thus, for each $m \in \N$, we have $m \e_W(G) \le \s_{W,\le \e_W(G) - 1}(G)$ and thus by the first part with $\ell = m \e_W(G)$ and $d= \e_W(G)$, the claim follows. 
\end{proof}

We establish an upper-bound for $\Dav_{W,m+1}(G)$ involving $\Dav_{W,m}(G)$ and $\s_{W, \le d}(G)$.
The result is similar to a result in \cite[Proposition 3.1]{FS} for the classical case. However, the weighted version is minimally weaker; in the classical version we can take $\s_{W, \le d}(G)-1$ instead of $\s_{W, \le d}(G)$.

\begin{lemma}
\label{lem_ub_rec}
We have $\Dav_{W,m+1}(G) \le \min_{d \in \N}\max\{\Dav_{W,m}(G) +d, \s_{W,\le d}(G)\}$.
\end{lemma}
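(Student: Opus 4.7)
The plan is to prove the inequality pointwise in $d$, i.e., show that for each fixed $d \in \mathbb{N}$ we have $\Dav_{W,m+1}(G) \le \max\{\Dav_{W,m}(G)+d,\, \s_{W,\le d}(G)\}$, and then take the minimum over $d$.

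To prove this pointwise statement, I would fix $d \in \mathbb{N}$ and set $n = \max\{\Dav_{W,m}(G)+d,\, \s_{W,\le d}(G)\}$. Let $S \in \mathcal{F}(G)$ be an arbitrary sequence with $|S| \ge n$. The task is to exhibit $m+1$ pairwise disjoint $W$-weighted non-empty zero-subsums of $S$. The natural strategy is a peeling argument: use the $d$-constrained invariant to extract one short zero-subsum, then apply the multi-wise invariant to what remains.

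Concretely, since $|S| \ge \s_{W,\le d}(G)$, the definition of $\s_{W,\le d}(G)$ supplies a $W$-weighted zero-subsum, coming from a subsequence $T$ of $S$ with $|T| \le d$. Now consider $S' = S T^{-1} \in \mathcal{F}(G)$, which satisfies $|S'| = |S| - |T| \ge n - d \ge \Dav_{W,m}(G)$. Hence, by the definition of $\Dav_{W,m}(G)$, the sequence $S'$ possesses $m$ pairwise disjoint non-empty $W$-weighted zero-subsums, coming from subsequences $T_1,\dots,T_m$ with $\prod_{j=1}^{m} T_j \mid S'$. Since $S = S' \cdot T$, it follows that $T_1,\dots,T_m,T$ are $m+1$ pairwise disjoint $W$-weighted non-empty zero-subsums of $S$, as required. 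Taking the minimum over $d$ on the right-hand side yields the stated bound.

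There is no real obstacle here; the argument is a direct peeling. The only points warranting a moment of care are (i) that the $W$-weighted zero-subsum extracted from $S$ via $\s_{W,\le d}(G)$ is non-empty (which is part of the definition) and disjoint from the $m$ subsums found in $S'$ (which is automatic since these $m$ subsums divide $S' = S T^{-1}$), and (ii) that one cannot in general sharpen $\s_{W,\le d}(G)$ to $\s_{W,\le d}(G) - 1$ in the weighted setting, explaining the remark about the minimally weaker form compared to the classical case of \cite{FS}.
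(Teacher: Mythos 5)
Your proof is correct and follows essentially the same peeling argument as the paper: extract one $W$-weighted zero-subsum of length at most $d$ using $\s_{W,\le d}(G)$, then apply $\Dav_{W,m}(G)$ to the remainder. The only cosmetic difference is that you prove the bound pointwise in $d$ and then minimize, whereas the paper fixes the minimizing $d_0$ at the outset.
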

\begin{proof}
Let $S$ be a sequence over $G$ such that
\[
|S| \ge \min_{d \in \N}\max\{ \Dav_{W,m}(G) +d, \s_{W,\le d}(G) \}.
\]
Let $d_0\in \N$ such that the minimum is attained for $d_0$. Since $|S| \ge \max\{\Dav_{W,m}(G) +d_0, \s_{W,\le d_0}(G)\}$, it follows that $S$ has a $W$-weighted zero-subsum of length at most $d_0$; let $T$ denote the corresponding subsequence of $S$. Then, $|ST^{-1}| \ge \Dav_{W,m}(G)$.

By the very definition of $\Dav_{W,m}(G)$ the sequence $ST^{-1}$ has $m$ disjoint $W$-weighted zero-subsums. Thus, we have established the existence of $m+1$ disjoint $W$-weighted zero-subsums of $S$, showing that  $\Dav_{W,m+1}(G) \le \min_{d \in \N}\max\{\Dav_{W,m}(G) +d, \s_{W,\le d}(G)\}$.
\end{proof}

We now can prove Theorem \ref{thm_arithprogr}.

\begin{proof}[Proof of Theorem \ref{thm_arithprogr}]
For $m \ge (\e_W(G)-1) |G|+1=m_0$ we have, using Lemma \ref{lem_swd_gen} and Lemma \ref{lem_lb_general},
\[
\max\{\Dav_{W,m}(G) +\e_W(G), \s_{W,\le \e_W(G)}(G)\} = \Dav_{W,m}(G) +\e_W(G).
\]
Thus by Lemma \ref{lem_ub_rec} we have for $m \ge m_0$ that  $\Dav_{W,m+1}(G) \le \Dav_{W,m}(G) +\e_W(G))$. Consequently the sequence $(\Dav_{W,m}(G) - m \e_W(G) )_{m \ge m_0 }$ is non-increasing. And, by Lemma \ref{lem_lb_general} it is a sequence of non-negative integers. Therefore it is eventually constant, establishing the result.
\end{proof}

We collect some lemmas that relate the values of the constants we investigate for a group $G$ to those of some subgroup $H$ and the quotient group $G/H$. These results generalize results known in the classical case and sometimes also in more restricted cases with weights.

We start with two lower bounds, which among others can be useful to assert in certain cases that the value of 
$\Dav_{W, m}(G)$ or  $\s_{W,\le d}(G)$ are strictly greater than the respective values for a proper subgroup (in other words the extremal sequences with respect to these constants generate the group).  These bounds generalize results known in the classical case, see for example \cite[Section 6.1]{geroldingerhalterkochBOOK}.

\begin{lemma}
\label{lem_Dadd}
Let $m,m_1, m_2 \in \mathbb{N}$ such that  $m \ge m_1 + m_2 -1$.  Let $H$ be a subgroup of $G$. Then $\Dav_{W, m}(G) \ge \Dav_{W, m_1}( H ) + \Dav_{W, m_2}( G / H ) - 1$.
\end{lemma}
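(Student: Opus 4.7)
The plan is to build an extremal sequence for the left-hand side by juxtaposing extremal sequences for $H$ and for $G/H$. First I would fix a sequence $S_1 \in \mathcal{F}(H)$ of length $\Dav_{W,m_1}(H)-1$ admitting at most $m_1 - 1$ disjoint non-empty $W$-weighted zero-subsums, and similarly a sequence $\overline{S_2}$ over $G/H$ of length $\Dav_{W,m_2}(G/H)-1$ admitting at most $m_2 - 1$ such zero-subsums. Lifting $\overline{S_2}$ term-by-term to an arbitrary sequence $S_2 \in \mathcal{F}(G)$ and setting $S = S_1 S_2$, I obtain a sequence in $\mathcal{F}(G)$ of length $\Dav_{W,m_1}(H) + \Dav_{W,m_2}(G/H) - 2$. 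It then suffices to prove that $S$ admits at most $m_1 + m_2 - 2$ disjoint non-empty $W$-weighted zero-subsums, since $m \ge m_1 + m_2 - 1$ would then force $\Dav_{W,m}(G) > |S|$.

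I would argue by contradiction: assume $T_1, \dots, T_k$ are $k = m_1 + m_2 - 1$ pairwise disjoint non-empty $W$-weighted zero-subsums of $S$. For each $j$, write uniquely $T_j = U_j V_j$, where $U_j$ is the divisor of $T_j$ supported on $S_1$ and $V_j$ the divisor supported on $S_2$. Since $H$ is a subgroup of $G$, the weighted sum of $U_j$ (with any integer weights) lies in $H$, and thus the weighted sum of $V_j$ also lies in $H$. Projecting modulo $H$ therefore gives that $\overline{V_j}$, the image of $V_j$ in $\mathcal{F}(G/H)$, has weighted sum $0$ in $G/H$ with respect to the same weights that were used in $T_j$.

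I would then partition the indices by whether $V_j$ is empty. If $V_j$ is empty, then $T_j = U_j$ is a non-empty $W$-weighted zero-subsum of $S_1$, and disjointness of the $T_j$'s transfers directly to these $U_j$'s; hence there are at most $m_1 - 1$ such indices. If $V_j$ is non-empty, then $\overline{V_j}$ is a non-empty $W$-weighted zero-subsum of $\overline{S_2}$, and disjointness is preserved under term-by-term projection; hence there are at most $m_2 - 1$ such indices. Summing yields $k \le m_1 + m_2 - 2$, contradicting $k = m_1 + m_2 - 1$.

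I do not expect any serious obstacle: the argument is the weighted analogue of the classical lower bound via subgroup/quotient. The only point needing care is that the weighted sum respects the projection $G \to G/H$, which is immediate because $H$ is a $\Z$-submodule of $G$ and the same integer weights are used on both sides.
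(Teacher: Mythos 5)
Your proposal is correct and follows essentially the same route as the paper: the same juxtaposition of extremal sequences for $H$ and $G/H$, the same decomposition of each putative zero-subsum into its part over $H$ and its lifted part, and the same case split on whether the quotient part is empty, yielding the bound $m_1-1$ plus $m_2-1$. The only (cosmetic) difference is that you phrase the counting as a contradiction while the paper bounds the number of disjoint zero-subsums directly.
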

\begin{proof}
Let $S$ be a sequence over $H$ of length  $\Dav_{W, m_1}( H ) - 1$ that does not have $m_1$ disjoint $W$-weighted zero-subsums.
Let $T$ be a sequence over $G$ of length $\Dav_{W, m_2}( G / H ) - 1$ such that the sequence $\overline{T}$ over $G/H$ obtained from $T$ by applying the canonical epimorphism to each element in $T$ does not have $m_2$ disjoint $W$-weighted zero-subsums. 

We claim that $ST$ does not have $m$ disjoint $W$-weighted zero-subsums. We start by analyzing the types of $W$-weighted zero-subsums there are. Let  $R \mid ST$ such that $0 \in \sigma_{W}(R)$. There are $S' \mid S$ and $T' \mid T$ such that  $R=S'T'$. 
\begin{itemize}
\item If $T'$ is empty, then obviously $0 \in \sigma_W(S')$, and this yields a $W$-weighted zero-subsum of $S$
\item If $T'$ is non-empty, then, since $\sigma_W(T') \cap (- \sigma_W(S')) \neq \emptyset$ and since $\sigma_W(S') \subset H$, we get that $\sigma_W(T')$ contains an element from $H$. Thus, the image of $T'$ under the canonical epimorphism gives rise to a $W$-weighted subsum over $G/H$. 
\end{itemize}
Suppose we have $R_1 \dots R_v \mid ST$ with $R_i$ non-empty and $0 \in \sigma_W (R_i)$ for each $i$.  
We write $R_i = S_i T_i$ such that $S_1  \dots S_m \mid S$ and $T_1 \dots T_m \mid T$.  

For each $i$ we know that $ 0 \in \sigma_W(S_i)$ where $S_i$ is non-empty or $0 \in \sigma_W (\overline{T_i})$ where $T_i$ is non-empty. Let us denote the set of indices $i$ for which the former holds by $I_S$ and the complement of $I_S$ in $\{1, \dots, m\}$ by $I_T$. 

Since $\prod_{i\in I_S} S_i \mid S$ and $S$ has at most $m_1 - 1$ disjoint $W$-weighted zero-subsums wet get that $|I_S|\le m_1 - 1$. Since $\prod_{i\in I_T} T_i \mid T$ and $\overline{T}$ has at most $m_2 - 1$ disjoint $W$-weighted zero-subsums wet get that $v= |I_S| + |I_T| \le m_1 -1 +  m_2 - 1 < m$. 

Thus, $ST$  does not have $m$ disjoint   $W$-weighted zero-subsums  and  $\Dav_{W, m}(G)-1 \ge |ST| \ge ( \Dav_{W, m_1}( H ) - 1 )  + ( \Dav_{W, m_2}( G / H ) - 1)$, establishing our claim.  
\end{proof}

\begin{lemma}
\label{lem_sadd}
Let $d \in \mathbb{N}$. Let $H$ be a subgroup of $G$, then $\s_{W,\le d}(G)  \ge \s_{W,\le d}(H) + \s_{W,\le d}(G/H) - 1$.
\end{lemma}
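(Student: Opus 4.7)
The plan is to mimic the concatenation argument used in Lemma \ref{lem_Dadd}, but simplified since we only need to block a single short zero-subsum rather than many disjoint ones. First I would dispose of the case when one of $\s_{W,\le d}(H)$ or $\s_{W,\le d}(G/H)$ is infinite: in that case one has, for every length $n$, an extremal sequence over $H$ (respectively, a sequence over $G$ whose image in $G/H$ is extremal), and since $W$-weighted sums of elements of $H$ computed in $G$ agree with those computed in $H$, and since a $W$-weighted zero-subsum over $G$ projects to one over $G/H$ with the same weights and length, this sequence certifies $\s_{W,\le d}(G)=\infty$ as well.

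Now assume both quantities on the right are finite. Pick a sequence $S \in \mathcal{F}(H)$ of length $\s_{W,\le d}(H)-1$ having no $W$-weighted zero-subsum of length at most $d$, and pick $T \in \mathcal{F}(G)$ of length $\s_{W,\le d}(G/H)-1$ whose image $\overline{T}$ under the canonical epimorphism $G \to G/H$ has no $W$-weighted zero-subsum of length at most $d$. The core of the argument is to show the concatenation $ST$ has no $W$-weighted zero-subsum of length at most $d$; this yields $\s_{W,\le d}(G)-1 \ge |ST| = (\s_{W,\le d}(H)-1)+(\s_{W,\le d}(G/H)-1)$, which is the claim.

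To verify the core claim, suppose for contradiction that $R \mid ST$ is non-empty with $0 \in \sigma_W(R)$ and $|R|\le d$, and factor $R = S'T'$ with $S'\mid S$ and $T'\mid T$. If $T'$ is empty, then $S'=R$ is a non-empty $W$-weighted zero-subsum of $S$ of length at most $d$, contradicting the choice of $S$. If $T'$ is non-empty, choose weights realizing $0$ as a $W$-weighted sum of $R$: the contribution coming from $S'$ lies in $H$, so the contribution coming from $T'$ also lies in $H$, which means that taking the images of the same weighted terms in $G/H$ produces a $W$-weighted zero-subsum of $\overline{T'}$ of length $|T'|\le |R|\le d$, contradicting the choice of $T$.

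I do not expect any serious obstacle: the proof is purely structural, the only subtle point being to observe that $W$-weighted sums respect the quotient map (same integer weights), so a short weighted zero-sum in $G$ descends to an equally short weighted zero-sum in $G/H$ once we have isolated the part supported on $T$. This is the same observation that drives the corresponding step in the proof of Lemma \ref{lem_Dadd}, so no new technique is needed.
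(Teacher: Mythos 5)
Your proposal is correct and follows essentially the same route as the paper: concatenate an extremal sequence over $H$ with a sequence over $G$ whose image in $G/H$ is extremal, and observe that any short $W$-weighted zero-subsum of the concatenation either lies entirely in the $H$-part or forces its $T$-part to have a $W$-weighted subsum in $H$, hence a short weighted zero-subsum of $\overline{T}$. Your version is in fact slightly more careful than the paper's, which leaves the length constraint and the infinite case implicit.
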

\begin{proof}
Let $T$ be a sequence over $G$ whose image in $G/H$ does not have a $W$-weighted zero-subsum.
Then $T$ does not have a $W$-weighted subsum that is an element of $H$.
Thus, for each $S$ a sequence over $H$ that does not have a  $W$-weighted zero-subsum,
we have that $ST$ does not have a $W$-weighted zero-subsum.
It follows that  $\s_{W, \le d}(G)  > (\s_{W,\le d} ( H ) - 1) + (\s_{W, \le d} ( G / H ) - 1 )$ establishing the claim.
\end{proof}

We observe that this lemma yields some information on the structure of the sequences over $G$ of length $\s_{W,\le d}(G) - 1$ that do not yet have a $W$-weighted subsum of length at most $d$. 
Namely, it follows, denoting by $H$ the subgroup generated by the elements in such a sequence, that $\s_{W,\le d}(G/H) =1$. Under various circumstances this implies that $G/H$ is trivial, which means that  the elements in such a sequence generate the group $G$.  In particular, this is the case for $G$ an elementary $p$-group and $W$ a non-trivial set of weights modulo $p$.

The following two results establish `inductive' upper bounds on our constants in terms of the constants for a subgroup and the quotient group with respect to this subgroup. The two preceding lemmas could be thought of as `inductive' lower bounds; this terminology is not so common.  For an overview of the inductive method see \cite[Section 5.7]{geroldingerhalterkochBOOK}. Our results expand known results to this more general context, containing the classical ones as a special case; yet in fact Lemma \ref{lem_sind} is even more general than the existing results in the classical case. For these `inductive' results we need to impose some restriction on the sets of weights, namely that $W$ is multiplicatively closed modulo $\exp(G)$.

Before phrasing our actual results we explain the relevance of this condition and more generally the inductive method. This method consists of splitting a zero-sum problem for $G$ into a problem for a subgroup $H$ and a problem for the quotient group $G/H$. Let $\pi: G \to G / H$ denote the natural epimorphism. Let $S$ be a sequence over $G$. To find a zero-sum subsequence of $S$ one can proceed in the following way. 
One considers the sequence $\pi(S)$ over $G/H$, obtained by applying $\pi$ to each element in $S$, for simplicity of notation we denote $\pi(S)$ by $\overline{S}$ and do alike for subsequences.  
Suppose we can assert the existence of disjoint zero-sum subsequences  $\overline{S_1} \dots \overline{S_k} \mid \overline{S}$. The condition that $\overline{S_i}$ is a zero-sum sequence in $G/H$ means that the sum of $S_i$ is an element of $H$. Thus, denoting $s_i = \sigma (S_i)$, we have a sequence $s_1 \dots s_k$ over $H$. Now, if this sequence has a zero-sum subsequence $\prod_{i \in I}s_i$, then it follows that $\prod_{i \in I}S_i$ is a zero-sum subsequence of $S$.  

We point out that we have used that we can take the sum of a sequence in an iterated way, namely we have $\sigma( \sigma( S_1 ) \dots \sigma( S_k ) ) =  \sigma(S_1 \dots  S_k)$. It is at this point that trying to directly carry over the results from the classical case to the problem with weights would fail.  

We now phrase a generalization of this for the weighted case. 
Let $s_i \in \sigma_W( S_i )$ for each $i$, then for $W$ multiplicatively closed modulo $\exp(G)$, we have  
\begin{equation}
\label{eq_sigincl}
\sigma_W( s_1 \dots s_k ) \subset \sigma_W( S_1 \dots  S_k ).
\end{equation}
To see this just recall that an element of $\sigma_W( s_1 \dots s_k )$ is of the form $\sum_{i=1}^k w_i s_i$ with $w_i \in W$ while each $s_i$ is of the form $\sum_{j \in J_i} v_j^i g_j$ with $v_j^i \in W$ where 
$S_i = \prod_{j \in J_i} g_j $; and, since modulo $\exp(G)$ we have that $w_i v_j^i $ is again an element of $W$, we have that $\sum_{j \in \cup_{i=1}^k J_i } (w_i v_j^i) g_j$ is an element of $\sigma_W( S_1 \dots  S_k )$.  

We now formulate our inductive bounds for the two types of constants that we study. 
For the classical Davenport constant this result appears in \cite[Proposition 2.6]{DOQ}

\begin{lemma}
\label{lem_Dind}
Let $m \ge 1$ and let  $H$ be a subgroup of $G$. Suppose $W$ is multiplicatively closed modulo $\exp(G)$. Then
\[
\Dav_{W, m} ( G ) \le \Dav_{W, \Dav_{W, m}(H)}( G / H).
\]
\end{lemma}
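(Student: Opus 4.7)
The plan is to apply the inductive method sketched just before the statement, in a weighted form. Fix a set of weights $W$ that is multiplicatively closed modulo $\exp(G)$, set $N = \Dav_{W,m}(H)$, and take an arbitrary sequence $S$ over $G$ of length at least $\Dav_{W,N}(G/H)$. I will show that $S$ admits $m$ disjoint $W$-weighted zero-subsums, which forces the inequality.

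First, consider $\overline{S}$, the image of $S$ in $G/H$ under the canonical epimorphism $\pi$. Since $|\overline{S}| = |S| \ge \Dav_{W,N}(G/H)$, by definition we can extract $N$ pairwise disjoint non-empty subsequences $\overline{S_1}, \dots, \overline{S_N}$ of $\overline{S}$, each carrying a $W$-weighted zero-sum. Lifting the weights, for each $i$ there exist weights $w_j^i \in W$ (indexed by the terms of $S_i$) so that the element
\[
s_i \;=\; \sum_{j} w_j^i g_j^i \;\in\; \sigma_W(S_i)
\]
lies in $H$, where the $g_j^i$ are the terms of $S_i$. This yields a sequence $s_1 \cdots s_N$ over $H$ of length $N = \Dav_{W,m}(H)$.

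By definition of $\Dav_{W,m}(H)$, this sequence over $H$ admits $m$ pairwise disjoint non-empty $W$-weighted zero-subsums. Concretely, there exist pairwise disjoint non-empty subsets $I_1, \dots, I_m \subset \{1, \dots, N\}$ and weights $u_i^k \in W$ with $\sum_{i \in I_k} u_i^k s_i = 0$ for each $k$. For each $k$, define the subsequence $T_k = \prod_{i \in I_k} S_i$ of $S$; these are pairwise disjoint subsequences of $S$ because the $I_k$ are disjoint and the $S_i$ themselves are disjoint subsequences of $S$. Expanding,
\[
\sum_{i \in I_k} u_i^k s_i \;=\; \sum_{i \in I_k} \sum_{j} (u_i^k w_j^i) g_j^i,
\]
which exhibits $0$ as a sum of the elements of $T_k$ weighted by the integers $u_i^k w_j^i$. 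Because $W$ is multiplicatively closed modulo $\exp(G)$, each product $u_i^k w_j^i$ is congruent modulo $\exp(G)$ to some element of $W$, and replacing it by that element does not change the value of the sum in $G$; this is exactly the inclusion (\ref{eq_sigincl}) applied to each $T_k$. Hence each $T_k$ is a non-empty $W$-weighted zero-subsum of $S$.

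This produces $m$ disjoint $W$-weighted zero-subsums of $S$, and therefore $\Dav_{W,m}(G) \le \Dav_{W,N}(G/H) = \Dav_{W, \Dav_{W,m}(H)}(G/H)$. The only nontrivial step is the final substitution: a priori $u_i^k w_j^i$ need not lie in $W$, and it is precisely the hypothesis of multiplicative closure modulo $\exp(G)$ that rescues the argument via (\ref{eq_sigincl}).
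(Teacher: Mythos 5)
Your proof is correct and follows essentially the same route as the paper's: push $S$ to $G/H$ to extract $\Dav_{W,m}(H)$ disjoint subsequences whose $W$-weighted sums meet $H$, apply the definition of $\Dav_{W,m}(H)$ to the resulting sequence over $H$, and use the multiplicative closure of $W$ via the inclusion \eqref{eq_sigincl} to convert the iterated weighted zero-sums back into $W$-weighted zero-subsums of $S$. You also correctly identify the substitution $u_i^k w_j^i$ as the one step where multiplicative closure is genuinely needed, which is exactly the point the paper emphasizes.
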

\begin{proof}
For notational simplicity let $ D = \Dav_{W, \Dav_{W, m} (H)}(G / H) $.
Let $S$ be a sequence over $G$ of length $D$.  We need to show that $S$ has $m$ disjoint $W$-weighted zero-subsums. By the definition of $D$ and considering $\overline{S}$ the sequence obtained from $S$ by applying the canonical epimorphism from $G$ to $G/H$ to each element, it follows that $\overline{S}$ has $\Dav_{W, m}(H)$ disjoint $W$-weighted zero-subsums (in $G/H$). Thus, $S$ has $\Dav_{W, m}(H)$ disjoint subsequences $S_i$ such that $\sigma_W(S_i)$ contains an element from $H$; denote this  element by $s_i$.  

The sequence $T=s_1 \dots s_{\Dav_{W, m}(H)}$ is thus a sequence over $H$. By the definition of $\Dav_{W, m}(H)$, there exists $T_1 \dots T_m \mid T$ with $T_j$ non-empty and $0 \in \sigma_W(T_j)$ for each $j$. 
Let $I_j \subset \{1, \dots, \Dav_{W, m}(H)\}$, for $j\in \{1, \dots, m\}$, be disjoint subsets such that $T_j = \prod_{i \in I_j}s_i$. 
 
Since $W$ is multiplicatively closed, we have by \eqref{eq_sigincl} that 
\[
\sigma_{W} (\prod_{i \in I_j} s_i) \subset \sigma_{W} (\prod_{i \in I_j} S_i).
\] 
Consequently, $R_j= \prod_{i \in I_j} S_i$ for $j \in \{1, \dots, m\}$ are non-empty disjoint subsequences of $S$ such that $0  \in \sigma_{W}(R_j)$. Thus, we established the existence of $m$ pairwise disjoint $W$-weighted zero-subsums of $S$.
\end{proof}

\begin{lemma}
\label{lem_sind}
Let $L_1, L_2 \subset \mathbb{N}$ be non-empty subsets, and let  $H$ be a subgroup of $G$. Suppose $W$ is multiplicatively closed modulo $\exp(G)$. Then
\[
\s_{W,  L} ( G ) \le    (\max L_2) ( \s_{W, L_1}(H)  - 1 )   +  \s_{ W , L_2 }( G / H )
\]
where $L = \cup_{l \in L_1} l L_2$ and $l L_2$ denotes the $l$-fold sumset of $L_2$.
\end{lemma}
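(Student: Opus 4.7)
The plan is to generalize the inductive-method argument that precedes the lemma: extract weighted zero-subsums of $\overline{S} = \pi(S)$ one after another, collect their ``$H$-values'' into an auxiliary sequence over $H$, and then apply $\s_{W,L_1}(H)$ to that auxiliary sequence.

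Concretely, let $S$ be a sequence over $G$ of length at least $n = (\max L_2)(\s_{W, L_1}(H) - 1) + \s_{W, L_2}(G/H)$, and write $\overline{S}$ for its image in $G/H$. I would iteratively build disjoint subsequences $S_1, S_2, \dots$ of $S$ as follows: having chosen $S_1, \dots, S_{k-1}$, look at the remaining sequence $S \cdot (S_1 \cdots S_{k-1})^{-1}$; as long as its length is at least $\s_{W, L_2}(G/H)$, its image in $G/H$ contains a nonempty $W$-weighted zero-subsum of length in $L_2$, which lifts to a new $S_k$ with $|S_k| \in L_2$ and some $s_k \in \sigma_W(S_k) \cap H$. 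A quick length count shows this process can be run for at least $\s_{W, L_1}(H)$ rounds: after $k-1$ rounds we have removed at most $(k-1)\max L_2$ terms, and the inequality $(k-1)\max L_2 \le n - \s_{W, L_2}(G/H)$ holds exactly when $k \le \s_{W, L_1}(H)$.

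Now form the sequence $T = s_1 \cdots s_{\s_{W, L_1}(H)}$ over $H$. By definition of $\s_{W,L_1}(H)$, there exists a nonempty $I \subset \{1, \dots, \s_{W, L_1}(H)\}$ with $|I| \in L_1$ such that $0 \in \sigma_W(\prod_{i \in I} s_i)$. Applying the key inclusion \eqref{eq_sigincl} (which is where the hypothesis that $W$ is multiplicatively closed modulo $\exp(G)$ enters), we get
\[
0 \in \sigma_W\bigl(\prod_{i \in I} s_i\bigr) \subset \sigma_W\bigl(\prod_{i \in I} S_i\bigr),
\]
so $R = \prod_{i \in I} S_i$ is a nonempty $W$-weighted zero-subsum of $S$. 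Its length is $|R| = \sum_{i \in I}|S_i|$, which is the sum of $|I|$ elements of $L_2$; since $|I| \in L_1$, this lies in $|I|\cdot L_2 \subset L$, as desired.

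The only delicate point is the length bookkeeping at the extraction stage—making sure that the last of the $\s_{W,L_1}(H)$ extractions is actually possible; everything else is a straightforward assembly of the inductive-method ideas already spelled out before the lemma, combined with \eqref{eq_sigincl}. I do not expect any serious obstacle beyond checking the arithmetic of that bound.
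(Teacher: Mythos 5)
Your proposal is correct and follows essentially the same route as the paper's own proof: iteratively extract disjoint subsequences whose images in $G/H$ have $W$-weighted zero-sums of length in $L_2$, collect the resulting elements of $H$ into an auxiliary sequence, apply $\s_{W,L_1}(H)$ to it, and transfer back via the inclusion \eqref{eq_sigincl}. Your explicit length bookkeeping for why the extraction runs for $\s_{W,L_1}(H)$ rounds is a slightly more detailed version of what the paper leaves implicit, and it checks out.
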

\begin{proof}
Let $S$ be a sequence over $G$ of length  $(\max L_2) ( \s_{W, L_1}(H)  - 1 )   +  \s_{ W , L_2 }( G / H )=t$. 
Let $\overline{S}$ be the sequence obtained from $S$ by applying the canonical epimorphism from $G$ to $G/H$ to each element. Since $t \ge \s_{W,  L_2}( G / H) $  there exists a subsequence $T_1 \mid S$  with $|T_1| \in L_2$, and in particular $ |T_1| \le \max L_2$, such that $ 0 \in \sigma_{W}(\overline{T_1})$ that is there exists some element $s_1 \in H$ such that  $s_1 \in \sigma_{W}(T_1)$.
We now consider $S_1 = T_1^{-1}S$.   If $|S_1| \ge  \s_{W, L_2}( G / H)$ we get $T_2 \mid S_1$  with the analogous properties. Continuing in this way we get $\s_{W, L_1 }(H) = c$ disjoint subsequences $T_1 \dots T_{c} \mid S$ with $|T_i| \in L_2$ and such that there exists some $s_i \in  \sigma_{W}(T_i) \cap  H$. By definition of $c$ the sequence $s_1 \dots s_{c}$  has a $W$-weighted subsum of length in $L_1$, say $0 \in \sigma_W( \prod_{j \in J}s_j)$ with $J \subset \{1, \dots,  c\}$ and $|J| \in L_1$.
Now, by \eqref{eq_sigincl}, since $W$ is multiplicatively closed modulo $\exp(G)$, we have
$\sigma_W( \prod_{j \in J}s_j) \subset  \sigma_{W} ( \prod_{j \in J} T_j) $ and 
thus $\prod_{j \in J}T_j$ has $0$ as a $W$-weighted sum. 
Since $|T_j| \in L_2$ for each $j$, we have that the length of $\prod_{j \in J}T_j$ is  in $|J| L_2 $, which is a subset of $\cup_{l \in L_1} l L_2$. Thus, we have found a $W$-weighted subsum of $S$ whose length is in $\cup_{l \in L_1} l L_2$, establishing the result. 
\end{proof}

We use the result above with special choices of $L_1$ and $L_2$, to make explicit some consequences of particular relevance to our investigations. The second part of this corollary generalizes \cite[Proposition 6]{GLM}, an inductive result in the weighted case. For the classical case one can find various such results in the literature, especially for $\mathsf{s}(G)$ and $\eta(G)$ they are well-known (see for example \cite[Section 6]{gaogersurvey}). For example, they allow in combination with results for $p$-groups, to determine the exact value of  $\mathsf{s}(G)$ and $\eta(G)$ for groups of rank at most $2$ (see \cite[Theorem 5.8.3]{geroldingerhalterkochBOOK}). For some recent results for groups of higher rank we refer to \cite{fanetal1, fanetal2, wzhuang}.   
 
\begin{corollary}
\label{cor_sind}
Let $d_1, d_2 \ge 1$ and let  $H$ be a subgroup of $G$. Suppose $W$ is multiplicatively closed modulo $\exp(G)$. Then
\begin{enumerate}
\item $\s_{ W,  \le d_1d_2 } ( G )    \le d_2 (\s_{W, \le d_1 }(H) -1  ) + \s_{W,  \le d_2}  ( G / H )$.
\item $\s_{ W,  \{ d_1d_2 \}} ( G )  \le d_2 (\s_{W, \{d_1\} }(H) -1) + \s_{W,  \{d_2\} } ( G / H )$.
\end{enumerate}
\end{corollary}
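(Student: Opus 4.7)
The plan is to obtain both bounds as direct specializations of Lemma \ref{lem_sind}, by making suitable choices of $L_1$ and $L_2$ and then applying the monotonicity $\s_{W, L'}(G) \le \s_{W, L}(G)$ whenever $L \subseteq L'$ (a larger set of admissible lengths only weakens the requirement on the existence of a weighted zero-subsum).

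For part (1), I would take $L_1 = \{1, \dots, d_1\}$ and $L_2 = \{1, \dots, d_2\}$, so that $\max L_2 = d_2$, $\s_{W, L_1}(H) = \s_{W, \le d_1}(H)$, and $\s_{W, L_2}(G/H) = \s_{W, \le d_2}(G/H)$. The key elementary observation is that the $\ell$-fold sumset of $\{1, \dots, d_2\}$ equals the interval $\{\ell, \ell+1, \dots, \ell d_2\}$ (the minimum and maximum are clear, and every integer in between is realizable). Since consecutive intervals overlap (as $\ell d_2 \ge \ell + 1$ whenever $d_2 \ge 1$), the union $L = \bigcup_{\ell = 1}^{d_1} \ell L_2$ is the full interval $\{1, 2, \dots, d_1 d_2\}$. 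Plugging these choices into Lemma \ref{lem_sind} yields exactly $\s_{W, \le d_1 d_2}(G) \le d_2(\s_{W, \le d_1}(H) - 1) + \s_{W, \le d_2}(G/H)$.

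For part (2), I would instead take the singletons $L_1 = \{d_1\}$ and $L_2 = \{d_2\}$. Then $\max L_2 = d_2$ and the $d_1$-fold sumset of $\{d_2\}$ is the singleton $\{d_1 d_2\}$, so $L = \{d_1 d_2\}$, and Lemma \ref{lem_sind} immediately gives the desired inequality $\s_{W, \{d_1 d_2\}}(G) \le d_2(\s_{W, \{d_1\}}(H) - 1) + \s_{W, \{d_2\}}(G/H)$.

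There is no real obstacle here; both items are essentially a substitution into Lemma \ref{lem_sind}, and the only point that merits a moment of care is the elementary sumset calculation used in part (1). Since Lemma \ref{lem_sind} already built in the hypothesis that $W$ is multiplicatively closed modulo $\exp(G)$, no additional structural work is needed.
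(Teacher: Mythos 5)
Your proof is correct and follows essentially the same route as the paper: both parts are obtained by specializing Lemma \ref{lem_sind} to $L_1=\{1,\dots,d_1\}$, $L_2=\{1,\dots,d_2\}$ and to $L_1=\{d_1\}$, $L_2=\{d_2\}$, respectively. The only (harmless) difference is that you compute $\bigcup_{l\in L_1} lL_2$ to be exactly $\{1,\dots,d_1d_2\}$, whereas the paper only observes the containment $\bigcup_{l\in L_1} lL_2\subset\{1,\dots,d_1d_2\}$ and invokes the monotonicity of $\s_{W,L}(G)$ in $L$, which also sidesteps the off-by-one issue in your overlap justification when $d_2=1$ (there $ld_2=l<l+1$, though the intervals are still contiguous).
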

\begin{proof}
For 1., we apply Lemma \ref{lem_sind} with $L_1 =\{1, \dots, d_1\}$ and $L_2 = \{ 1, \dots, d_2 \}$. 
Since in this case  $\cup_{l \in L_1} l L_2 \subset \{1, \dots, d_1d_2 \}$ the result follows.

In the same way, with  $L_1 =\{ d_1\}$ and $L_2 =\{ d_2\}$ and thus  $L=\cup_{l \in L_1} l L_2 = \{d_1d_2 \}$, we get 2.
\end{proof}

We end this section with a recursive lower bound for $\Dav_{W, m}(G)$. At first this bound could seem weak, but in view of Lemma \ref{lem_ub_rec} we note that it is in some sense optimal in case  $\e_W(G)=2$, which covers various cases of interest (see the discussion after Definition \ref{def_D0}). 
Indeed, this bound allows us to obtain a more explicit version of Theorem \ref{thm_arithprogr}.  
We also note that $\Dav_{W,m+1}(G) \ge \Dav_{W,m}(G) +1$ always holds; adding $0$ to a sequence increases the maximal number of disjoint $W$-weighted zero-subsums by exactly one.

\begin{lemma}
\label{lem_lb_+2}
Let $|G|>1$ and let $m \in \N$. Suppose that $W$ is multiplicatively closed modulo $\exp(G)$ and non-trivial modulo $\exp(G)$. Then $\Dav_{W,m+1}(G) \ge \Dav_{W,m}(G) +2$.
\end{lemma}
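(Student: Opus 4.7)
The plan is to produce a sequence $S'$ of length $\Dav_{W,m}(G)+1$ that still admits at most $m$ disjoint $W$-weighted zero-subsums; this immediately yields $\Dav_{W,m+1}(G)\ge\Dav_{W,m}(G)+2$. I would start from an extremal sequence $S$ of length $\Dav_{W,m}(G)-1$ that has no $m$ disjoint $W$-weighted zero-subsums, pick an element $g_0\in G$ of order $\exp(G)$ (available since $|G|>1$), and set $S' := S\cdot g_0\cdot(-g_0)$. The two newly added slots are treated as distinct positions in the sequence, which matters only if $\exp(G)=2$, in which case $g_0$ and $-g_0$ coincide as group elements.

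Next I would assume for contradiction that $S'$ admits $m+1$ disjoint $W$-weighted zero-subsums $Z_1,\dots,Z_{m+1}$ and do a short case analysis based on how they intersect the two new slots. Since at most one of the $Z_i$ can occupy the $g_0$-slot and at most one can occupy the $-g_0$-slot, the only configuration not immediately contradicting the extremality of $S$ is: one $Z_i$ is of the form $T_1\cdot g_0$, another of the form $T_2\cdot(-g_0)$ with $T_1,T_2\mid S$, and the remaining $m-1$ lie in $S$ and are disjoint from $T_1T_2$. I would then rule out $T_1$ or $T_2$ being empty by noting that, since $g_0$ has order $\exp(G)$ and $W$ contains no multiple of $\exp(G)$, no $w\in W$ satisfies $wg_0=0$.

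The crucial step would then be to recombine these two partial zero-subsums into a single $W$-weighted zero-subsum of $T_1T_2$, which is a subsequence of $S$ disjoint from the other $m-1$; the resulting $m$ disjoint zero-subsums of $S$ give the sought contradiction. Writing the two partial relations as $\sigma_1=-wg_0$ and $\sigma_2=w'g_0$, with $w,w'\in W$ and $\sigma_1,\sigma_2$ appropriate $W$-weighted sums of $T_1,T_2$, I would form the combination $w'\sigma_1+w\sigma_2$. The multiplicative closure of $W$ modulo $\exp(G)$ ensures that each rescaled coefficient $w'u$ or $wu$ (with $u$ an original weight in the image of $W$ modulo $\exp(G)$) still lies in that image, so this is a genuine $W$-weighted sum of $T_1T_2$; the pairing of $g_0$ with $-g_0$ then makes the two $g_0$-contributions cancel exactly.

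The main obstacle is this recombination step. Without multiplicative closure the rescaled coefficients need not lie in $W$ modulo $\exp(G)$, and without pairing $g_0$ with $-g_0$ the $g_0$-contributions in $w'\sigma_1+w\sigma_2$ would not cancel. Both the multiplicative closure and the non-triviality of $W$ are thus used essentially here, with non-triviality already invoked earlier to force $T_1$ and $T_2$ to be non-empty.
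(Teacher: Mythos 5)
Your proposal is correct and follows essentially the same route as the paper's proof: append $g_0(-g_0)$ to an extremal sequence, force the two new slots into two distinct zero-subsums (non-emptiness of the remainders coming from non-triviality of $W$), and merge those two via the cross-multiplication $w'\sigma_1+w\sigma_2$, which lands back in $\sigma_W(T_1T_2)$ precisely because $W$ is multiplicatively closed modulo $\exp(G)$. No substantive differences to report.
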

\begin{proof}
Let $S$ be a sequence of length $\Dav_{W,m}(G) - 1$ that does not have $m$ disjoint $W$-weighted zero-subsums. Let $g \in G$ with $\ord g = \exp(G)$. We consider $S(-g)g$.
The result follows if we can show that $S(-g)g$ does not have $m+1$ disjoint $W$-weighted zero-subsums. Assume to the contrary there are $T_1 \dots T_{m+1} \mid S$ such that $0 \in \sigma_W(T_i)$ for each $i$. We may assume that $g \mid T_1$ and $(-g) \mid T_2$; if only one or none of the $T_i$'s would contain elements from $(-g)g$ then the remaining sequences would be already subsequences of $S$, contradicting the assumption that $S$ does not contain $m$ disjoint $W$-weighted subsums. Since $W$ is non-trivial modulo $\exp(G)$ and $\ord g = \exp(G)$, it is clear that $g^{-1}T_1$ and $(-g)^{-1}T_2$ are non-empty. 

Thus, we have $-wg \in \sigma_W(g^{-1}T_1)$ and $-w'(-g) \in \sigma_W((-g)^{-1}T_2)$.
Consequently,   $w'(-wg) \in  w' \cdot \sigma_W(g^{-1}T_1) = \sigma_W(g^{-1}T_1)$, where for the last equality we used that $W$ is multiplicatively closed modulo $\exp(G)$, and likewise $w(-w'(-g)) \in \sigma_W((-g)^{-1}T_2)$. 

Thus  $-ww'g  \in  \sigma_W(g^{-1}T_1) $ and  $ww'g \in \sigma_W((-g)^{-1}T_2)$, and therefore $0 \in \sigma_{W}(((-g)g)^{-1}T_1 T_2)$. Yet, $(((-g)g)^{-1}T_1 T_2) T_3 \dots T_{m+1} \mid S$, and we have $m$ disjoint $W$-weighted zero-subsums of $S$, a contradiction.
\end{proof}

For the  case $\e_W(G) = 2$ and $W$ multiplicatively closed modulo $\exp(G)$,   
we now obtain some explicit upper bound for $m_W(G)$, the index at which $\Dav_{W,m}(G)$ starts to be an arithmetic progression. 
\begin{corollary}
\label{cor_mWG}
Suppose that $W$ is multiplicatively closed modulo $\exp(G)$ and that $\e_W(G)= 2$.
Then  $\Dav_{W, m+1} (G) = \Dav_{W, m} (G) + 2$ for each $m \ge |G|$. In particular, $m_W(G) \le |G|$.  
\end{corollary}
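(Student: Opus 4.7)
The plan is to combine the recursive upper bound of Lemma \ref{lem_ub_rec} with the matching lower bound of Lemma \ref{lem_lb_+2}, once one checks that the $d=2$ term in the minimum is the dominant one for $m \ge |G|$.

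First I would note that the hypothesis $\e_W(G)=2$ together with $W$ being multiplicatively closed forces $W$ to be non-trivial modulo $\exp(G)$ (if $0 \in \overline{W}$ then $\e_W(G)=1$) and forces $|G|>1$. Thus Lemma \ref{lem_lb_+2} applies and gives the lower bound $\Dav_{W,m+1}(G) \ge \Dav_{W,m}(G)+2$ for every $m \in \N$, so I only have to establish the reverse inequality for $m \ge |G|$.

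Next I would specialize Lemma \ref{lem_ub_rec} to $d=2$, obtaining
\[
\Dav_{W,m+1}(G) \le \max\{\Dav_{W,m}(G)+2,\ \s_{W,\le 2}(G)\}.
\]
By Lemma \ref{lem_swd_gen}, since $\e_W(G)=2$, we have $\s_{W,\le 2}(G) \le |G|+1$. On the other hand, by Lemma \ref{lem_lb_general} we have $\Dav_{W,m}(G) \ge 2m$, so $\Dav_{W,m}(G)+2 \ge 2m+2 \ge 2|G|+2 > |G|+1$ whenever $m \ge |G|$. Consequently the maximum is attained by the first term and $\Dav_{W,m+1}(G) \le \Dav_{W,m}(G)+2$, which combined with the lower bound gives equality for all $m \ge |G|$.

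Finally, an easy induction shows that $\Dav_{W,m}(G) = \Dav_{W,|G|}(G) + 2(m-|G|)$ for all $m \ge |G|$, so writing $\Dav_{W,0}(G) := \Dav_{W,|G|}(G) - 2|G|$ (in accordance with Definition \ref{def_D0}) we get $\Dav_{W,m}(G) = \Dav_{W,0}(G) + m \e_W(G)$ for every $m \ge |G|$, hence $m_W(G) \le |G|$. There is essentially no obstacle here; the only thing worth double-checking is that the two required hypotheses of Lemma \ref{lem_lb_+2} ($|G|>1$ and non-triviality of $W$) are automatic under the assumptions of the corollary, as remarked above.
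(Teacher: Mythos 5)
Your proposal is correct and follows essentially the same route as the paper, which simply cites Lemma \ref{lem_ub_rec} and Lemma \ref{lem_lb_+2}; your extra steps (bounding $\s_{W,\le 2}(G)\le |G|+1$ via Lemma \ref{lem_swd_gen} and $\Dav_{W,m}(G)\ge 2m$ via Lemma \ref{lem_lb_general} to see which term of the maximum dominates) are exactly the computation implicit in the paper's one-line proof, mirroring the argument in the proof of Theorem \ref{thm_arithprogr}. Your observation that $\e_W(G)=2$ automatically yields $|G|>1$ and the non-triviality of $W$ needed for Lemma \ref{lem_lb_+2} is a worthwhile check that the paper leaves tacit.
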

\begin{proof}
This is a direct consequence of Lemma \ref{lem_ub_rec} and Lemma \ref{lem_lb_+2}. 
\end{proof}
We remark that the bound on $m_W(G)$ could be somewhat improved even with the methods at hand; yet we merely meant to give some explicit bound here.

\section{Coding theory and weighted sequences}
\label{sec_codesgeneral}

In this section, we develop the link between fully-weighted zero-sum problems and problems on linear codes that was mentioned already in the introduction. We recall that for elementary $2$-groups, this  link was already known;
note that in fact for these groups the fully-weighted problem coincides with the classical one, and the connection came up in that context (see \cite{CZ, PS}). Furthermore, we recall that for elementary $3$-groups the fully-weighted problem coincides with the plus-minus weighted problem, which is of particular interest. 

Before we discuss the link to coding theory we recall some basic facts related to fully-weighted zero-sum problems over elementary $p$-groups. Let $p$ be a prime number. Let $G$ be a group with exponent $p$, and let $A =\{1, \dots , p-1\}$ denote the full set of weights. Recall that $G$ can be considered in a natural way as a vector space over the field with $p$ elements. We also identify the elements of $A$  with the non-zero elements of $\F_p$ in the natural way.  Furthermore, we recall that a sequence $S = g_1 \dots g_n$ over $G$ has no $A$-weighted zero-subsum if and only if $(g_1, \dots, g_n)$ is linearly independent,
in particular
\begin{equation}
\label{eq_davA}
\Dav_A (C_p^r) = r+1  \text{ and } \s_{A, \le r+1} (C_p^r) = r + 1.
\end{equation}

Now, we briefly recall some notions from coding theory in a way suitable for our application. As said above, $C_p^n$ is in a natural way a vector space over $\F_p$ the field with $p$ elements. We implicitly fix some basis of  $C_p^n$, and thus can write its elements simply as $n$-tuples of elements of $\F_p$:
\[
C_{p}^{n}=\{(x_1,x_2,\dots,x_n) \colon x_i \in \F_p,\,\,1\leq i \leq n\}.
\]

A $p$-ary linear code  of length $n$ and dimension $k$ is a subspace $\mathcal{C} \subset C_{p}^{n}$ of dimension $k$.
Briefly, we say that  $\mathcal{C}$ is an $[n,k]_{p}$-code. The elements of the code $\mathcal{C}$ are called codewords. The support of an  element $x= (x_1, x_2, \dots, x_n)$ of $C_{p}^{n}$, is the subset of $\{1, \dots, n\}$ corresponding to the indices of non-zero coordinates $x_i$. (This is the usage of the word `support' common in coding theory; in the context of zero-sum sequences `support' typically has a different meaning.) The weight of an element $x\in C_p^n$,  denoted by $d(x)$, is the cardinality of the support of $x$. The minimal distance of a code $\mathcal{C}$, denoted $d(\mathcal{C})$, is equal to the minimum $d(x)$ with $x$ a non-zero codeword of $\mathcal{C}$; that is, the minimal distance of the code $\mathcal{C}$, is equal to the minimum cardinality of the support of a non-zero element of $\mathcal{C}$. If $\mathcal{C}$ has minimal-distance $d$, then we say that $\mathcal{C}$ is a $[n,k,d]_{p}$-code. Since in the present paper we only consider linear codes, we choose the above quick, but not very intuitive way, to introduce the minimal distance.

A parity check matrix of an $[n,k]_{p}$-code  $\mathcal{C}$ is a matrix $H$ of dimension $(n-k)\times n$ (with full rank) over $\F_p$ such that $c\in \mathcal{C}$ if and only if $Hc=0$ (where we consider $c$ as a column vector). For $H=[g_1|\dots|g_n]$, we can interpret the columns $g_i$ as elements of $C_{p}^{n-k}$, where again some basis is fixed, and in this way one can assign to an $[n,k]_{p}$-code a sequence $S=g_1\dots g_n$ of length $n$ over $C_{p}^{n-k}$. Now, $c = (c_1, \dots , c_n) \in \mathcal{C}$ means that $\sum_{i=1}^n c_i g_i = 0$. If we let $I$ denote the support of $c$, then we have  $\sum_{i\in I} c_i g_i = 0$ for \emph{non-zero} $c_i$. 

In other words $\sum_{i\in I} c_i g_i = 0$ is an $A$-weighted zero-subsum of $S$ (possibly the empty one); its length is exactly the cardinality of the support of $c$, that is $d(c)$. Conversely, if $ \sum_{i\in J} c_i' g_i = 0 $ is a (possibly empty) $A$-weighted zero-subsum of $S$, then $ c' = ( c_1' , \dots , c_n' ) $, where we set $c_i'= 0$ for $i \notin J$, is an element of $\mathcal{C}$ with weight $|J|$, the length of the subsum. Of course, $0 \in \mathcal{C}$ corresponds to the empty $A$-weighted zero-subsum. Thus, we have a direct correspondence between non-zero codewords and $A$-weighted zero-subsums.

We summarize these results in the lemmas below (recall $A = \{1,\dots , p - 1 \}$).

\begin{lemma}
\label{lem_codebasic}
The minimal distance of a $p$-ary linear code $\mathcal{C}$ is equal to  the minimal length of
a $A$-weighted zero-subsum of columns of a parity check matrix of $\mathcal{C}$.
\end{lemma}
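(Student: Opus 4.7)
The plan is to make explicit the correspondence already sketched in the paragraphs preceding the lemma: non-zero codewords of $\mathcal{C}$ correspond bijectively to $A$-weighted zero-subsums of the sequence of columns of a parity check matrix, in a way that matches the weight of a codeword with the length of the subsum. Once this correspondence is in place, taking minima on both sides yields the equality of invariants.

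Concretely, I would fix a parity check matrix $H = [g_1 | \dots | g_n]$ of the $[n,k]_p$-code $\mathcal{C}$, interpret the columns as elements of $C_p^{n-k} \cong \F_p^{n-k}$, and form the sequence $S = g_1 \dots g_n$ over $C_p^{n-k}$. For the first direction, given a non-zero codeword $c = (c_1, \dots, c_n) \in \mathcal{C}$, let $J \subset \{1,\dots,n\}$ denote its support, so that $c_i \neq 0$ precisely when $i \in J$. Then the relation $Hc = 0$ reads $\sum_{i \in J} c_i g_i = 0$ with each $c_i \in \F_p \setminus \{0\}$; identifying $\F_p \setminus \{0\}$ with $A = \{1, \dots, p-1\}$ this exhibits an $A$-weighted zero-subsum of $S$ of length $|J| = d(c)$.

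For the reverse direction, suppose $\sum_{i \in J} w_i g_i = 0$ is an $A$-weighted zero-subsum of $S$, where $J$ is non-empty and each $w_i \in A$. Defining $c' = (c_1', \dots, c_n')$ by $c_i' = w_i$ for $i \in J$ and $c_i' = 0$ otherwise yields a vector in $\F_p^n$ satisfying $H c' = 0$, hence $c' \in \mathcal{C}$. Since $J$ is non-empty, $c'$ is a non-zero codeword, and its weight $d(c')$ equals $|J|$, the length of the subsum. These two constructions are visibly inverse to one another on non-zero codewords and non-empty $A$-weighted zero-subsums, so they set up a length-preserving bijection.

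Taking the minimum of $d(c)$ over non-zero $c \in \mathcal{C}$ on the coding side gives $d(\mathcal{C})$, while taking the minimum length over $A$-weighted zero-subsums of $S$ on the zero-sum side gives the asserted quantity; by the bijection these are equal. There is no real obstacle here beyond bookkeeping: the only points deserving care are the non-emptiness convention for subsums (matched with the restriction to non-zero codewords) and the identification of $A$ with $\F_p^\times$, both of which were already set up in the preamble to the lemma.
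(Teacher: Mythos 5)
Your proposal is correct and follows exactly the same route as the paper, which proves the lemma by appealing to the correspondence between non-zero codewords and $A$-weighted zero-subsums set up in the discussion immediately preceding the statement. You have simply written out that correspondence in full detail, including the length-preserving bijection and the handling of the non-emptiness convention, which the paper leaves implicit.
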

\begin{proof}
This is immediate by the discussion just above.
\end{proof}

\begin{lemma}
\label{lem_codematrix}
Let $S = g_1 \dots g_n$ be a sequence over $C_{p}^{r}$ (with some fixed basis) such that the set of all $g_i$'s is a generating set of $C_{p}^{r}$. Moreover, let $H=[g_1|\dots |g_n]$ denote the $r\times n$ matrix over the field $\F_p$ (we identify the $g_i$'s with their coordinate vectors, in column-form). Then the code $\mathcal{C}_{S}$ with parity check matrix $H$ is an $[n,n-r]_{p}$-code. And, each $[n,n-r]_{p}$-code can be obtained in this way.
\end{lemma}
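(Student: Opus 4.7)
The plan is to observe that the claim is essentially a reformulation of elementary linear algebra, specifically the rank--nullity theorem combined with the definition of a parity check matrix, and that the hypothesis on the sequence $S$ is exactly what is needed for $H$ to qualify as a parity check matrix.

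For the forward direction, I would first verify that $H=[g_1|\dots |g_n]$ is a valid parity check matrix of some $[n,k]_p$-code. The key point is that, by hypothesis, the $g_i$ generate $C_p^r$, so the column space of $H$ is all of $C_p^r$; equivalently, $H$ has rank $r$. Thus $H$ is an $r\times n$ matrix of full row rank, which is precisely the definition required of a parity check matrix. The associated code $\mathcal{C}_S = \{c \in C_p^n : Hc = 0\}$ is the kernel of the linear map $C_p^n \to C_p^r$ induced by $H$, so by rank--nullity its dimension equals $n - r$. Its length is $n$ by construction, so $\mathcal{C}_S$ is an $[n, n-r]_p$-code, as desired.

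For the converse, let $\mathcal{C}$ be an arbitrary $[n, n-r]_p$-code. By definition a parity check matrix $H'$ of $\mathcal{C}$ is an $(n-(n-r)) \times n = r \times n$ matrix of full row rank whose kernel (acting on column vectors) is $\mathcal{C}$. Writing $H' = [g_1' | \dots | g_n']$ and interpreting the columns as elements of $C_p^r$ via the fixed basis, the full row rank condition translates exactly to the statement that the columns span $C_p^r$, i.e., that the sequence $S' = g_1'\dots g_n'$ generates $C_p^r$. Taking this $S'$, we obviously have $\mathcal{C}_{S'} = \mathcal{C}$, establishing that every $[n, n-r]_p$-code arises in this fashion.

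There is no genuine obstacle here; the whole statement is a matter of recognizing that ``sequence over $C_p^r$ whose terms generate the group'' and ``list of column vectors of a full-rank $r \times n$ matrix over $\F_p$'' are two names for the same object, and then invoking rank--nullity to read off the dimension of the kernel. The one minor point worth being explicit about is that $H$ must be required to have full rank in the definition of a parity check matrix (as indeed stipulated in the preceding paragraph of the paper), since otherwise the code might have larger dimension than $n-r$.
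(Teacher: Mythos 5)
Your proof is correct and follows the same route as the paper's: full rank of $H$ from the generating hypothesis, rank--nullity for the dimension, and the existence of a full-rank parity check matrix for the converse. You simply spell out the steps in more detail than the paper does.
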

\begin{proof}
Since the set of $g_i$'s is a generating set of $C_{p}^{r}$, the matrix $H$ has full rank, i.e. rank $r$. Therefore, $\mathcal{C}_{S}$ is an $(n-r)$-dimensional subspace of $C_{p}^{n}$. This gives the first claim. The second claim follows, since for each $[n,n-r]_{p}$-code there is an  $r\times n$ parity check matrix, which has full rank.
\end{proof}

\begin{lemma}
\label{lem_sad-codes}
Let $d,r \in \N$ and $p$ prime. Then $\s_{A, \le d}(C_p^r)-1$ is equal to the maximum $n$ such that there exists an $[n,n-r]_p$-code of minimal distance at least $d+1$.
\end{lemma}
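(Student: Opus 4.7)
The plan is to deduce the equality from the correspondence already established in Lemmas \ref{lem_codebasic}--\ref{lem_codematrix}, which links $[n,n-r]_p$-codes to sequences of length $n$ over $C_p^r$ whose terms \emph{generate} $C_p^r$, and under which the minimal distance of the code equals the minimal length of an $A$-weighted zero-subsum of the associated sequence (here and below, with the convention that the empty subsum, corresponding to the zero codeword, is disregarded on both sides).

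For the direction that the right-hand side bounds $\s_{A,\le d}(C_p^r)-1$ from below, I would start with an $[n,n-r]_p$-code $\mathcal{C}$ of minimal distance at least $d+1$ and a parity check matrix $H = [g_1 | \dots | g_n]$ of full rank $r$. The columns form a sequence $S = g_1 \dots g_n$ over $C_p^r$ of length $n$ whose terms generate $C_p^r$, and by Lemma \ref{lem_codebasic} (applied in both directions: every non-zero codeword gives an $A$-weighted zero-subsum of matching length, and vice versa) $S$ admits no $A$-weighted zero-subsum of length at most $d$. Hence $n \le \s_{A,\le d}(C_p^r) - 1$, which covers the inequality (and handles trivially the case where the supremum on the right is $\infty$).

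For the converse direction, assume $\s_{A,\le d}(C_p^r)$ is finite (otherwise there is nothing to prove, using that the right-hand side is then also unbounded by the previous paragraph), and set $n = \s_{A,\le d}(C_p^r) - 1$. There exists a sequence $S = g_1 \dots g_n$ over $C_p^r$ with no $A$-weighted zero-subsum of length at most $d$. The one subtle point — and the main obstacle — is that to invoke Lemma \ref{lem_codematrix} we need the $g_i$'s to \emph{generate} $C_p^r$, which is not built into the definition of $\s_{A,\le d}$. This is exactly handled by the structural observation following Lemma \ref{lem_sadd}: letting $H$ denote the subgroup generated by the terms of $S$, we have $\s_{A,\le d}(C_p^r / H) = 1$, and since $A = \{1,\dots,p-1\}$ is non-trivial modulo $p = \exp(C_p^r)$, this forces $C_p^r/H$ to be trivial, i.e.\ $H = C_p^r$.

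Given that the terms of $S$ generate $C_p^r$, Lemma \ref{lem_codematrix} produces an $[n, n-r]_p$-code $\mathcal{C}_S$ whose parity check matrix has columns $g_1,\dots,g_n$, and by Lemma \ref{lem_codebasic} its minimal distance is at least $d+1$. Therefore the supremum on the right-hand side is at least $n = \s_{A,\le d}(C_p^r) - 1$, and combined with the first inequality this yields equality. (Both sides are equal to $\infty$ in the remaining case, as noted above.)
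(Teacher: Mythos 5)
Your proof is correct and follows essentially the same route as the paper's: both directions go through the sequence--code correspondence of Lemmas \ref{lem_codebasic} and \ref{lem_codematrix}, and the generation issue for the extremal sequence is resolved exactly as the paper does, via the observation following Lemma \ref{lem_sadd}. Your treatment is in fact slightly more careful than the paper's, notably in handling the case $\s_{A,\le d}(C_p^r)=\infty$ explicitly.
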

\begin{proof}
By definition of $\s_{A, \le d}(C_p^r)$ there exists a sequence $S$ over $C_p^r$ of length $\s_{A, \le d}(C_p^r)-1$ that does not have an  $A$-weighted zero-subsum of length at most $d$. Also note that the elements in $S$ generate $C_p^r$; this follows for example from Lemma \ref{lem_sadd}. Thus, $\mathcal{C}_S$ is an $[n,n-r]_p$-code, and it cannot have a non-zero codeword of weight at most $d$; that is, its minimal distance is at least  $d+1$. Conversely, given an $[n,n-r, d']_p$-code with $d' > d$, we get a sequence $S_{\mathcal{C}}$ over $C_p^r$ of length $n$ whose shortest $A$-weighted zero-subsum has length $d'> d$. Thus,  $\s_{A, \le d}(C_p^r)>n$, completing the argument.
\end{proof}

We end this general section on the link between linear codes and $A$-weighted zero-sum problems by pointing out that the existence of $m$ \emph{disjoint} $A$-weighted zero-subsums of some sequence $S$ corresponds precisely to the existence of $m$ non-zero codewords in $\mathcal{C}_S$  with pairwise disjoint supports.
This fits together with a notion considered in coding theory, especially the case $m=2$. We recall that Cohen and Lempel \cite{CL} called a linear code for which any two non-zero codewords do not have disjoint support an intersecting code. We refer to \cite{PS} for a more detailed discussion of (binary) intersecting codes in the current context.

\section{Some exact values and bounds for $\s_{A, \le d}(C_p^r)$}
\label{sec_exactvalue}

We continue to use the notation that  $G$ denotes  a finite abelian group and $A = \{1, \dots, \exp(G)-1\}$.
In this section we establish various results for  $\s_{A,\le d}(C_p^r)$.
The problem of determining this constant is equivalent to a problem in coding theory, as we saw in Section \ref{sec_codesgeneral}. Our results are mainly based on this link and results in coding theory. For the case that $d= 3$, it will however be advantageous to use a further equivalence to reduce the problem to one in discrete geometry.

We start by discussing some simple extremal cases.  It is clear that for $|G|\neq 1$ we always have  $\s_{A, \le 1}(G) = \infty$. Moreover, by \eqref{eq_davA} we have that $\s_{A, \le d} (C_p^r)= r+1$ for $d \ge r+1$.

In the following lemma  we determine  $\s_{A, \le 2} (C_p^r)$ by a simple direct argument.

\begin{lemma}
\label{lem_sa2_p}
Let $p$ be a prime and let $r \in \N$.
Then $\s_{A, \le 2}(C_p^r)= 1 + \frac{p^{r} - 1}{p - 1}$.
\end{lemma}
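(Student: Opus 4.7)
The plan is to give a direct combinatorial characterization of the sequences that avoid an $A$-weighted zero-subsum of length at most $2$, and then count them.

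First I would analyze what it means for a sequence $S = g_1 \dots g_n$ over $C_p^r$ to have no $A$-weighted zero-subsum of length $\le 2$. A length-$1$ subsum is of the form $w g_i$ with $w \in \{1,\dots,p-1\}$, and since $w$ is a unit in $\F_p$, this equals $0$ if and only if $g_i = 0$. A length-$2$ subsum is of the form $w_1 g_i + w_2 g_j$ with $w_1,w_2 \in \{1,\dots,p-1\}$ and $i \ne j$; this equals $0$ if and only if $g_i = -(w_2/w_1) g_j$, i.e.\ $g_i$ and $g_j$ generate the same $1$-dimensional $\F_p$-subspace of $C_p^r$. Hence $S$ avoids such subsums precisely when all $g_i$ are nonzero and pairwise non-proportional; equivalently, the $g_i$ represent distinct points of the projective space $PG(r-1,p)$.

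Next I would observe that the number of points of $PG(r-1,p)$ is $(p^r-1)/(p-1)$, since the $p^r - 1$ nonzero elements of $C_p^r$ partition into lines through the origin of size $p-1$ each. So any sequence with pairwise non-proportional nonzero terms has length at most $(p^r-1)/(p-1)$, and this bound is attained by taking one representative from each line. This shows that the longest sequence without an $A$-weighted zero-subsum of length $\le 2$ has length exactly $(p^r-1)/(p-1)$, giving the claimed formula for $\s_{A,\le 2}(C_p^r)$.

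No real obstacle arises; the only thing to take care of is the verification that scalar-multiple collisions are exactly the forbidden configurations, which is immediate from $A = \F_p^{\times}$. As a side remark, via Lemma~\ref{lem_sad-codes} the statement can equivalently be read as: the largest $n$ for which an $[n,n-r]_p$-code of minimal distance $\ge 3$ exists is $(p^r-1)/(p-1)$, achieved by the $p$-ary Hamming code, so the result is consistent with the classical Hamming/sphere-packing bound for single-error correcting codes.
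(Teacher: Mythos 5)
Your proposal is correct and follows essentially the same route as the paper: characterize the admissible sequences as those whose terms are nonzero and pairwise non-proportional (the paper phrases this as ``at most one element from each non-trivial cyclic subgroup''), then count the $\frac{p^r-1}{p-1}$ lines through the origin. Even your closing remark about Hamming codes mirrors a comment the paper makes immediately after its proof.
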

\begin{proof}
Let $S$ be a sequence over $C_p^r$. Clearly, $S$ has an $A$-weighted zero-subsum of length $1$ if and only if $S$ contains $0$. So, suppose $S$ contains only non-zero elements. Moreover, we observe that $S$ has an $A$-weighted zero-subsum if and only if $S$ contains two elements (including multiplicity) from the same (non-trivial) cyclic subgroup. Thus, the maximal length of a sequence without $A$-weighted zero-subsum of length at most $2$ is equal to the number of non-trivial cyclic subgroups of $C_p^r$, which equals $\frac{p^{r} - 1}{p - 1}$. This implies the claim.
\end{proof}

One could also use the link to coding theory to obtain this result---observe that the  extremal examples correspond to the parity check matrix of $p$-ary Hamming codes---yet in view of the simplicity of a direct argument we preferred to give it.

We proceed to discuss the case that $d=3$. It is well-known and not hard to see that $\s_{A, \le 3}(C_2^r) = 1 + 2^{r-1}$ for each $r\ge 1$ (see \cite[Section 7]{FS} for a more detailed discussion of this and related problems).  We thus restrict to considering $p > 2$. We begin by a further equivalent description of  $\s_{A, \le 3}(C_p^r)$. Recall that a cap set is a subset of an affine or projective space that does not contain three co-linear points. It is well-known that linear codes of minimal distance (at least) four and cap sets are related;
see, e.g., \cite[Section 4]{bierbrauer-edel} or \cite[Section 27.2]{hirschfeld-thas}.
We summarize this relation in a form convenient for our applications in the following lemma. We include the already established relation of our problem on sequences to a problem on codes.

\begin{lemma}
\label{lemma_cap}
Let $p$ be an odd prime, and let $r \ge 3$ and $n \ge 4$ be integers. Let $g_1, \dots, g_n \in C_p^r\setminus \{0\}$ and assume the $g_i$'s generate $C_p^r$.
The following statements are equivalent.
\begin{enumerate}
\item The sequence $g_1 \dots g_n$ has no $A$-weighted zero-subsum of length at most $3$.
\item The $[n,n-r]_p$-code with parity check matrix $[g_1\mid \dots \mid g_n]$ has minimal distance at least $4$.
\item The set of points represented by the $g_i$'s in the projective space of dimension $r-1$ over the field with $p$ elements is a cap set of size $n$.
\end{enumerate}
In particular, the following integers are equal.
\begin{itemize}
\item  $\s_{A, \le 3}(C_p^r) - 1$.
\item The maximal $n$ such that there exists an $[n,n-r]_p$-code of minimal distance at least four.
\item The maximal cardinality of a cap set in the projective space of dimension $r-1$ over $\F_p$.
\end{itemize}
\end{lemma}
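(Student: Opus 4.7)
The plan is to reduce the three-way equivalence to a careful accounting of codewords of low weight, and then to read off the equality of the three maxima. First, I would dispatch (1) $\Leftrightarrow$ (2) directly from Lemma \ref{lem_codebasic}: the minimal distance of the code $\mathcal{C}_S$ with parity check matrix $[g_1 \mid \dots \mid g_n]$ equals the minimum length of an $A$-weighted zero-subsum of $g_1 \dots g_n$, so ``no such zero-subsum of length at most $3$'' translates verbatim to ``minimum distance at least $4$.''

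Next, for (2) $\Leftrightarrow$ (3), I would analyze codewords separately in weights $1$, $2$, $3$. Since each $g_i$ is nonzero, there is no weight-$1$ codeword. A weight-$2$ codeword is a relation $a g_i + b g_j = 0$ with $i \neq j$ and $a, b \in A$, which holds if and only if $g_i$ and $g_j$ are $\F_p$-proportional, i.e., determine the same point in the projective space over $\F_p$; hence the absence of weight-$2$ codewords is equivalent to the $g_i$ representing $n$ \emph{distinct} projective points. Assuming this distinctness, a weight-$3$ codeword is a relation $a g_i + b g_j + c g_k = 0$ with $i, j, k$ distinct, and the one subtlety is that $a, b, c$ must then all be nonzero, for otherwise one would recover a weight-$\leq 2$ relation among the $g_i$, contradicting the distinctness of their projective images. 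Thus such a codeword exists if and only if the three distinct points represented by $g_i, g_j, g_k$ are collinear, and combining everything, minimum distance at least $4$ is exactly the cap set condition.

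Finally, for the ``in particular'' part, the equality between $\s_{A, \le 3}(C_p^r) - 1$ and the maximum length of an $[n, n - r]_p$-code of minimal distance at least $4$ is Lemma \ref{lem_sad-codes}, while the equality with the maximum cap set size in the projective space of dimension $r - 1$ over $\F_p$ follows by applying the weight analysis of (1) $\Leftrightarrow$ (3) in both directions: a witnessing sequence for $\s_{A, \le 3}(C_p^r) - 1$ projectivizes to a cap set of the same size, and conversely any cap set lifts, via arbitrary nonzero representatives, to a sequence of equal length without $A$-weighted zero-subsum of length at most $3$. The main difficulty, such as it is, is the coefficient-nonvanishing observation in the weight-$3$ analysis; the rest is a direct unfolding of the three dictionaries.
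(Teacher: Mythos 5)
Your proposal is correct and follows essentially the same route as the paper: the equivalence of (1) and (2) is the code--sequence dictionary (Lemma \ref{lem_codebasic}/\ref{lem_sad-codes}), and the equivalence with (3) is exactly the paper's observation that weight-$2$ relations correspond to coinciding projective points while weight-$3$ relations with all coefficients nonzero correspond to collinear triples. Your extra remark on coefficient nonvanishing in the weight-$3$ case is a harmless elaboration of the same point the paper makes implicitly by insisting the cap set has size $n$.
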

\begin{proof}
The equivalence of the first two is merely Lemma \ref{lem_sad-codes}. For the equivalence with the third  it suffices to note that a relation of the form $ag_i + bg_j + cg_k= 0$ with non-zero (modulo $p$) coefficients $a,b,c$  means that the points represented by $g_i,g_j,g_k$ are co-linear; in addition note that  a relation of the form $ag_i + bg_j= 0$ with non-zero (modulo $p$) coefficients $a,b$ would mean that $g_i$ and $g_j$ represent the same point in the projective space, which is excluded by insisting that the size of the cap set is $n$.
\end{proof}

Having this equivalence at hand, there is wealth of results on $\s_{A, \le 3}(C_p^r)$ available. We refer to
\cite{bierbrauer-edel} for a recent survey article on the problem of determining large cap sets in projective spaces. However, only for small dimensions an answer for all $p$ is known. We summarize these results in the current notation.

\begin{theorem}
\label{thm_expl_lowd}
Let $p$ be an odd prime. Then
\begin{enumerate}
\item $\s_{A, \le 3}(C_p)= 2$.
\item $\s_{A, \le 3}(C_p^2)= 3$.
\item $\s_{A, \le 3}(C_p^3)= 2 + p$.
\item $\s_{A, \le 3}(C_p^4)= 2 + p^2$.
\end{enumerate}
\end{theorem}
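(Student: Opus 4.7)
My plan is to handle the four cases separately, exploiting the fact that for $r \geq 3$ the problem translates directly (via Lemma \ref{lemma_cap}) into the problem of determining the maximal size of a cap set in a projective space over $\F_p$, and for $r \leq 2$ a direct linear-algebraic argument is more convenient since Lemma \ref{lemma_cap} was stated for $r\ge 3$.

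For the case $r=1$, I would observe that a single non-zero element of $C_p$ has no $A$-weighted zero-subsum (note $0 \notin A \pmod{p}$), while any two non-zero elements $g,h$ of $C_p$ satisfy $h = kg$ for some $k \in \{1,\dots,p-1\}$ and thus $(p-k)g + h \equiv 0$, giving a length-$2$ $A$-weighted zero-subsum; if one of the elements is $0$ the length-$1$ subsum exists. So $\s_{A,\le 3}(C_p)=2$. For the case $r=2$, a basis $(e_1,e_2)$ forms a sequence of length $2$ with no $A$-weighted zero-subsum of length at most $3$ (since $A$-weighted zero-subsums correspond to non-trivial linear dependencies), so $\s_{A,\le 3}(C_p^2)\ge 3$. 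Conversely, any sequence $g_1g_2g_3$ over $C_p^2$ of length $3$ has a non-trivial linear relation $a_1g_1+a_2g_2+a_3g_3=0$; if all $a_i$ are non-zero mod $p$ we get a length-$3$ $A$-weighted zero-subsum, and otherwise we obtain a non-trivial relation on a proper non-empty subsequence, yielding a zero-subsum of length $1$ or $2$ (a length-$1$ one forcing a $g_i=0$ which itself is a zero-subsum of length $1$).

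For $r=3$ and $r=4$, I would invoke Lemma \ref{lemma_cap}, reducing to the determination of the maximal size of a cap set in $PG(2,p)$ and $PG(3,p)$ respectively. The key inputs are then classical results from finite geometry: by Segre's theorem (or direct elementary arguments using the fact that a conic in $PG(2,p)$ is a cap set with $p+1$ points and no larger cap exists when $p$ is odd), the maximal cap in $PG(2,p)$ has size $p+1$; and by the Barlotti--Panella theorem (elliptic quadrics, or equivalently ovoids, in $PG(3,p)$), the maximal cap in $PG(3,p)$ has size $p^2+1$ for odd $p$. These yield $\s_{A,\le 3}(C_p^3) = p+2$ and $\s_{A,\le 3}(C_p^4) = p^2+2$ respectively.

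The work is essentially bookkeeping plus citing the two finite-geometry results; the real obstacle, if any, would lie in those two classical theorems (which I would cite from \cite{bierbrauer-edel} or \cite{hirschfeld-thas} rather than reproving), and in being careful that the definitional conventions match: the $g_i$'s in Lemma \ref{lemma_cap} must be pairwise non-proportional and generate $C_p^r$, but this is automatic for an extremal sequence in view of Lemma \ref{lem_sadd} (which forces generation of $C_p^r$) and of the fact that two proportional non-zero elements themselves yield a length-$2$ $A$-weighted zero-subsum.
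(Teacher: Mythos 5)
Your proposal is correct and follows essentially the same route as the paper: the cases $r=1,2$ are the linear-independence observation already recorded in \eqref{eq_davA} (namely $\s_{A,\le d}(C_p^r)=r+1$ for $d\ge r+1$), and the cases $r=3,4$ are obtained exactly as the paper obtains them, by passing through Lemma \ref{lemma_cap} to the classical maximal cap sizes $p+1$ in $PG(2,p)$ and $p^2+1$ in $PG(3,p)$ for odd $p$, cited from the finite-geometry literature. Your remark that extremal sequences automatically consist of pairwise non-proportional generators (via Lemma \ref{lem_sadd} and the length-$2$ zero-subsum from proportional elements) is the right bookkeeping to make the correspondence with cap sets of size $n$ legitimate.
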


For larger values of $r$ exact values are only known for small $p$; some of these (for $p \le 7$) are implicitly recalled in the subsequent results. We refer again to \cite{bierbrauer-edel} for more complete information.
Moreover, we point out that data on this problem can be retrieved from databases for the parameters of codes; we mention specifically  \url{www.codetables.de} by Grassl \cite{grassl} and MinT (see \url{http://mint.sbg.ac.at}) by Sch\"urer and Schmid \cite{mint}.

Indeed, recall from Lemma \ref{lem_sad-codes} that to determine $\s_{A, \le d}(C_p^r)-1$ is equivalent to determining the largest $n$ such that an $[n,n-r]_p$-code with minimal distance greater than $d$ exists. Or put differently, determining $\s_{A,\le d}(C_p^r)$ is equivalent to determining the smallest $n$ such that  $d$ is the largest minimum distance of an $[n,n-r]_p$-code.

This type of information can be obtained from data on parameters of codes, though the particular piece of information we need is rather not highlighed in such data.
 
Yet, note that, for some fixed $p$, having a table, such as given in \url{www.codetables.de}, with on one axis the length $n$ of a code, on the other axis the dimension  $k$ of a code, and with entries the maximal minimal distance of an $[n,k]_p$-code (or bounds for it), to find information on $\mathsf{s}_{A,\le d}$ for $C_p^r$ we merely have to look along a suitable diagonal of the table, namely the one with $n-k=r$. 

Moreover, note that using the advanced user interface to MinT one can in fact specify as one of the search-parameters the co-dimension of the code, that is $n-k=r$ (in our notation).

The following results are extracted from the above mentioned data (except for the results for large $d$ that are merely \eqref{eq_davA}). We do not include results for groups of rank one and two as these were already discussed in Lemma \ref{lem_sa2_p}.
However, we do include the results for $d=2$ and $d=3$, already mentioned, to compare the size of the constants.

\begin{theorem}
\label{thm_expl_3} \
\begin{enumerate}
\item $\s_{A, \le 2}(C_3^3) = 14$, $\s_{A, \le 3}(C_3^3) = 5$, and $\s_{A, \le d}(C_3^3)= 4$ for $d \ge 4$.
\item $\s_{A, \le 2}(C_3^4) = 41$, $\s_{A, \le 3}(C_3^4) = 11$, $\s_{A, \le 4}(C_3^4) = 6$,  and $\s_{A, \le d}(C_3^4)= 5$ for $d \ge 5$.
\item $\s_{A, \le 2}(C_3^5) = 122$, $\s_{A, \le 3}(C_3^5) = 21$, $\s_{A, \le 4}(C_3^5) = 12$, $\s_{A, \le 5}(C_3^5) = 7$,  and $\s_{A, \le d}(C_3^5)= 6$ for $d \ge 6$.
\item $\s_{A, \le 2}(C_3^6) = 365$, $\s_{A, \le 3}(C_3^6) = 57$, $\s_{A, \le 4}(C_3^6) = 15$, $\s_{A, \le 5}(C_3^6) = 13$,  $\s_{A, \le d}(C_3^6)= 8$; and $\s_{A, \le d}(C_3^6)= 8$ for $d \ge 7$.
\end{enumerate}
\end{theorem}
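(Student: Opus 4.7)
The overall strategy is to translate every equality $\mathsf{s}_{A,\le d}(C_3^r)=N$ into the code-theoretic assertion, via Lemma \ref{lem_sad-codes}, that $N-1$ is the largest $n$ for which a $[n,n-r]_3$-code of minimum distance at least $d+1$ exists. Once this translation is made, I will handle the table entry by entry, grouping values of $d$ according to which technique is needed: the extremal cases $d=2$ and $d\ge r+1$, the geometric case $d=3$, and the intermediate cases.

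First I would dispose of the extremal cases. For $d=2$, Lemma \ref{lem_sa2_p} directly gives $\mathsf{s}_{A,\le 2}(C_3^r)=1+(3^r-1)/2$, which produces $14,41,122,365$ for $r=3,4,5,6$ and matches the first entry of each item. For $d\ge r+1$ the value $\mathsf{s}_{A,\le d}(C_3^r)=r+1$ is exactly \eqref{eq_davA}; this covers the last entry of each item, and in particular the $[n,1,n]_3$ repetition code realises the boundary case $d=r$ whenever it is needed, since a length $r+1$ minimal $A$-weighted zero-subsum in $C_3^r$ can only be obtained from a linearly dependent family of size $r+1$ of non-zero vectors, none of which is forced to be short.

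Next I would handle $d=3$. By Lemma \ref{lemma_cap} the value $\mathsf{s}_{A,\le 3}(C_3^r)-1$ equals the maximal cardinality $m_2(\mathrm{PG}(r-1,3))$ of a cap set in the projective space of dimension $r-1$ over $\F_3$. The four values I need, namely $m_2(\mathrm{PG}(2,3))=4$, $m_2(\mathrm{PG}(3,3))=10$, $m_2(\mathrm{PG}(4,3))=20$ and $m_2(\mathrm{PG}(5,3))=56$, are classical and one simply cites the standard references on caps in small projective spaces (Hill, Pellegrino, Edel--Bierbrauer and the surveys \cite{bierbrauer-edel}, \cite{hirschfeld-thas}).

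The remaining entries are the intermediate distances $4\le d\le r$. For each of these I propose to read off both the lower bound (construction of an $[n,n-r,d+1]_3$-code of the stated maximal length) and the upper bound (non-existence of an $[n+1,n+1-r,d+1]_3$-code) from the tables referenced in the text, namely \cite{grassl} and \cite{mint}; whenever convenient one can replace the table lookup by a self-contained argument, e.g.\ the repetition code $[d+1,1,d+1]_3$ for the diagonal case $d=r-1$, a length-six MDS-style argument for $\mathsf{s}_{A,\le 5}(C_3^5)=7$, or a Griesmer-bound calculation on the non-existence side. The main obstacle is precisely this intermediate regime: the values $\mathsf{s}_{A,\le 4}(C_3^5)=12$, $\mathsf{s}_{A,\le 4}(C_3^6)=15$ and $\mathsf{s}_{A,\le 5}(C_3^6)=13$ are not obtainable from a simple uniform construction but rest on known optimal ternary codes together with non-trivial non-existence results (often via linear-programming bounds or exhaustive search), so for a paper-length treatment one is essentially forced to invoke the databases, which is what the statement announces.
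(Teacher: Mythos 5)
Your proposal is correct and follows essentially the same route as the paper: translate each entry via Lemma \ref{lem_sad-codes} into a statement about the maximal length of an $[n,n-r]_3$-code of minimum distance at least $d+1$, settle $d=2$ by Lemma \ref{lem_sa2_p} and $d\ge r+1$ by \eqref{eq_davA}, use the cap-set equivalence of Lemma \ref{lemma_cap} for $d=3$, and read the remaining intermediate values off the code tables of Grassl and MinT. The paper gives no further argument beyond this, so your slightly more explicit organization (and the identification of witnesses such as the ternary Golay codes) is entirely consistent with its intent.
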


\begin{theorem}
\label{thm_expl_5} \
\begin{enumerate}
\item $\s_{A, \le 2}(C_5^3) = 32$, $\s_{A, \le 3}(C_5^3) = 7$, and $\s_{A, \le d}(C_5^3)= 4$ for $d \ge 4$.
\item $\s_{A, \le 2}(C_5^4) = 157$, $\s_{A, \le 3}(C_5^4) = 27$, $\s_{A, \le 4}(C_5^4) = 7$,  and $\s_{A, \le d}(C_5^4)= 5$ for $d \ge 5$.
\item $\s_{A, \le 2}(C_5^5) = 782$, $67\le \s_{A, \le 3}(C_5^5) \le 89$, $\s_{A, \le 4}(C_5^5) = 13$, $\s_{A, \le 5}(C_5^5) = 7$,  and $\s_{A, \le d}(C_5^5)= 6$ for $d \ge 6$.
\end{enumerate}
\end{theorem}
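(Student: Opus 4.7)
The plan is to convert each value $\s_{A,\le d}(C_5^r)$ into the corresponding coding-theoretic or combinatorial parameter and then invoke either a closed-form formula established earlier in the paper, a known result in the literature, or tabulated data on optimal linear codes. The underlying translation is Lemma~\ref{lem_sad-codes}: $\s_{A,\le d}(C_5^r)-1$ equals the largest $n$ for which there exists an $[n,n-r]_5$-code of minimum distance at least $d+1$. So the whole theorem is really a summary of what is known about $5$-ary linear codes of small co-dimension.

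First, I would handle the extremal regimes uniformly. The values $\s_{A,\le d}(C_5^r)=r+1$ for $d\ge r+1$ are immediate from \eqref{eq_davA}, since in that range the constraint on the length of the subsum is vacuous, and the value for $d\ge r+1$ reduces to the ordinary $A$-weighted Davenport constant of $C_5^r$. The values for $d=2$ follow from Lemma~\ref{lem_sa2_p}: substituting $p=5$ and $r\in\{3,4,5\}$ into $\s_{A,\le 2}(C_5^r)=1+(5^r-1)/4$ gives $32$, $157$, and $782$, respectively.

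Next, I would treat $d=3$ via Lemma~\ref{lemma_cap}. For $r=3$ and $r=4$ this is precisely parts (3) and (4) of Theorem~\ref{thm_expl_lowd}, yielding $2+5=7$ and $2+5^2=27$. The $r=5$ case amounts to determining the maximal cardinality of a cap set in the projective $4$-space $PG(4,5)$; this is an open problem, so one only gives the best known upper and lower bounds from the cap-set literature surveyed in \cite{bierbrauer-edel}, which translate into the stated inequality $67\le \s_{A,\le 3}(C_5^5)\le 89$. This is the step where the main obstacle lies: no exact determination can currently be written down, and one must content oneself with citing the best known constructions (for the lower bound) and the best known non-existence arguments for caps of the given size (for the upper bound).

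Finally, the remaining intermediate values $\s_{A,\le 4}(C_5^4)=7$, $\s_{A,\le 4}(C_5^5)=13$, and $\s_{A,\le 5}(C_5^5)=7$ would be obtained by applying Lemma~\ref{lem_sad-codes} and reading off the required diagonal of the tables of optimal $5$-ary linear codes. Concretely, $\s_{A,\le d}(C_5^r)$ is the smallest $n$ such that no $[n,n-r]_5$-code has minimum distance exceeding $d$; in the tables of \cite{grassl} and \cite{mint} one inspects, for fixed co-dimension $r\in\{3,4,5\}$, the $n$ at which the optimal minimum distance drops below $d+1$. In each of these three cases the relevant entries are known exactly (both existence of a code realizing the given distance and an MDS- or Singleton-type upper bound preventing a longer one), yielding the listed values. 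I would present the argument as a single proof that lists, for each $(r,d)$, the code or cap construction realizing the lower bound $\s_{A,\le d}(C_5^r)-1$ and the matching non-existence result giving the upper bound, with explicit references to \cite{bierbrauer-edel,grassl,mint}.
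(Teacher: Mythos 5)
Your proposal matches the paper's approach exactly: the paper likewise obtains the $d=2$ values from Lemma \ref{lem_sa2_p}, the $d=3$ values from the cap-set equivalence (Theorem \ref{thm_expl_lowd} and the bounds surveyed in \cite{bierbrauer-edel}), the large-$d$ values from \eqref{eq_davA}, and the remaining entries by reading the co-dimension-$r$ diagonal of the code tables in \cite{grassl} and \cite{mint} via Lemma \ref{lem_sad-codes}. No substantive difference.
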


\begin{theorem}
\label{thm_expl_7} \
\begin{enumerate}
\item $\s_{A, \le 2}(C_7^3) =58$, $\s_{A, \le 3}(C_7^3) = 9$, and $\s_{A, \le d}(C_7^3)= 4$ for $d \ge 4$.
\item $\s_{A, \le 2}(C_7^4) =400$, $\s_{A, \le 3}(C_7^4) = 51$, $\s_{A, \le 4}(C_7^4) = 9$,  and $\s_{A, \le d}(C_7^4)= 5$ for $d \ge 5$.
\end{enumerate}
\end{theorem}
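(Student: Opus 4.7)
The plan is to handle each numerical claim by applying the appropriate reduction from Section~\ref{sec_codesgeneral} together with a classical result on projective caps or linear MDS codes over $\F_7$. The claims split naturally into four regimes, only one of which requires a case-specific argument.

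The values of $\s_{A,\le 2}(C_7^r)$ are direct applications of Lemma~\ref{lem_sa2_p}, which yields $1 + (7^r-1)/6$ for $r \in \{3, 4\}$. The statements $\s_{A,\le d}(C_7^r) = r+1$ for $d \ge r+1$ follow from \eqref{eq_davA} together with the monotonicity of $\s_{A,\le d}$ in $d$ noted after Definition~\ref{def_main}: once the constant hits $r+1$ at $d = r+1$ it cannot go lower, and \eqref{eq_davA} gives the matching upper bound.

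For $d = 3$, I would invoke Lemma~\ref{lemma_cap} to reduce the problem to determining the maximum size of a cap set in $\mathrm{PG}(r-1, 7)$. For $r = 3$ this is the classical question of the maximum arc in the projective plane of order $7$; since $7$ is odd, the maximum is an oval of size $7 + 1 = 8$ (a result essentially due to Bose), which gives $\s_{A,\le 3}(C_7^3) = 9$. For $r = 4$ the relevant object is an ovoid in $\mathrm{PG}(3, 7)$, which by the classical theorems of Barlotti and Panella on ovoids in projective three-space has size $7^2 + 1 = 50$; consequently $\s_{A,\le 3}(C_7^4) = 51$.

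The only remaining case is $\s_{A,\le 4}(C_7^4) = 9$. By Lemma~\ref{lem_sad-codes}, I need the maximum length $n$ of an $[n, n-4]_7$-code of minimal distance at least $5$. The Singleton bound forces such a code to be MDS, since $5 \le n - (n-4) + 1 = 5$, and for $q = 7$ the maximum length of an MDS code with these parameters is $q + 1 = 8$, realized by a doubly extended Reed--Solomon code (this falls in the range where the MDS conjecture is classically known for odd $q$). Hence $\s_{A,\le 4}(C_7^4) = 9$. The only genuinely substantive ingredients are the classical theorems on ovals, ovoids, and small-length MDS codes over $\F_7$; the main obstacle is locating the cleanest reference for each rather than any conceptual difficulty, since once the reductions of Section~\ref{sec_codesgeneral} are in place every claim becomes bookkeeping.
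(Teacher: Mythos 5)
Your proposal is correct in substance but takes a genuinely different route from the paper. For this theorem the paper offers no self-contained argument: beyond the cases $d=2$ (Lemma \ref{lem_sa2_p}) and $d\ge r+1$ (equation \eqref{eq_davA}), it states that the values are ``extracted from the above mentioned data,'' i.e.\ read off from the code-parameter databases of Grassl and MinT via Lemma \ref{lem_sad-codes}. You instead identify the classical theorems underlying each entry: Segre/Bose on ovals in $\mathrm{PG}(2,7)$ and Barlotti--Panella on ovoids in $\mathrm{PG}(3,7)$ for the $d=3$ values (note these are in fact already recorded in the paper as Theorem \ref{thm_expl_lowd}, parts (3) and (4), so you are re-deriving $2+p$ and $2+p^2$ for $p=7$), and for $\s_{A,\le 4}(C_7^4)=9$ the Singleton bound forcing any $[n,n-4,\ge 5]_7$-code to be MDS, together with the fact that such a code exists for $n=8$ (doubly extended Reed--Solomon) but not for $n=9$. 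Your approach buys verifiability and proper attribution where the paper only cites tables; the paper's approach buys uniformity (the same lookup also yields Theorems \ref{thm_expl_3}, \ref{thm_expl_5}, and \ref{thm_expl_2}, for which no such clean classical description is available in every entry). One caveat on the MDS step: the MDS conjecture is not ``classically known for odd $q$'' in general, but what you actually need is only the dual statement that a $4$-dimensional MDS code over $\F_7$ (equivalently an arc in $\mathrm{PG}(3,7)$) has length at most $q+1=8$, which is indeed classical (Segre); you should phrase it that way.

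One discrepancy your method exposes and which you should have flagged: Lemma \ref{lem_sa2_p} gives $\s_{A,\le 2}(C_7^4) = 1 + (7^4-1)/6 = 1 + 400 = 401$, not the value $400$ printed in the theorem. Since Lemma \ref{lem_sa2_p} is proved in the paper and the analogous entries in Theorems \ref{thm_expl_3} and \ref{thm_expl_5} all match the formula $1+(p^r-1)/(p-1)$, the printed $400$ is evidently a typo for $401$; your derivation is the correct one, but as written your proposal silently asserts agreement with the stated value.
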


In particular, for lager values of $d$ also rather precise information for groups of larger rank could be obtained in this way. However, we do not include this information explicitly.

We also include some information that we can obtain for $p=2$; here, the problem reduces to the classical case. The general point is already established in \cite{CZ}; we merely added the numerical values using known data.  We recall that in this case not only is it known that $\s_{\le 2}(C_2^r) = 2^{r}$ but also that $\s_{\le 3}(C_2^r) = 1 + 2^{r-1}$ (see above). Thus, we can reduce to considering $d\ge 4$ and groups of rank at least $4$.

\begin{theorem}
\label{thm_expl_2} \
\begin{enumerate}
\item $\s_{ \le 4}(C_2^4) = 6$, $\s_{\le d}(C_2^4) = 5$  for $d \ge 5$.
\item $\s_{ \le 4}(C_2^5) = 7$, $\s_{ \le 5}(C_2^5) = 7$, and $\s_{\le d}(C_2^5) = 6$  for $d \ge 6$.
\item $\s_{ \le 4}(C_2^6) = 9$, $\s_{ \le 5}(C_2^6) = 8$,  $\s_{\le 6}(C_2^6) = 8$, and $\s_{\le 6}(C_2^6) = 7$  for $d \ge 7$.
\item $\s_{ \le 4}(C_2^7) = 12$, $\s_{ \le 5}(C_2^7) = 10$, $\s_{ \le 6}(C_2^7) = 9$, $\s_{ \le 7}(C_2^7) = 9$, and $\s_{ \le d}(C_2^7) = 8$ for $d \ge 8$.
\item $\s_{ \le 4}(C_2^8) = 18$, $\s_{ \le 5}(C_2^8) = 13$, $\s_{ \le 6}(C_2^8) = 10$, $\s_{ \le 7}(C_2^8) = 10$,  $\s_{\le 8}(C_2^8) = 10$, and $\s_{ \le d}(C_2^8) = 9$ for $d \ge 9$.
\item $\s_{ \le 4}(C_2^9) = 24$, $\s_{ \le 5}(C_2^9) = 19$, $\s_{ \le 6}(C_2^9) = 12$, $\s_{ \le 7}(C_2^9) = 11$, $\s_{ \le 8}(C_2^9) = 11$, $\s_{ \le 9}(C_2^9) = 11$, and  $\s_{ \le d}(C_2^9) = 10$ for $d \ge 10$.
\item $\s_{ \le 4}(C_2^{10}) = 34$, $\s_{ \le 5}(C_2^{10}) = 25$, $\s_{ \le 6}(C_2^{10}) = 16$, $\s_{ \le 7}(C_2^{10}) = 13$, $\s_{ \le 8}(C_2^{10}) = 12$, $\s_{ \le 9}(C_2^{10}) = 12$, $\s_{ \le 10}(C_2^{10}) = 12$, and  $\s_{ \le d}(C_2^{10}) = 11$ for $d \ge 11$.
\end{enumerate}
\end{theorem}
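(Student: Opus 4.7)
The plan is to reduce each numerical equality to a statement about optimal parameters of binary linear codes, and then to invoke the relevant entries from standard tables. Since $p=2$ forces $A = \{1,\dots,2-1\} = \{1\}$, the fully-weighted constants coincide with the classical (unweighted) ones, which is why the statement uses the notation $\s_{\le d}$ rather than $\s_{A,\le d}$. This reduction and the general translation to coding theory were already carried out in \cite{CZ}; the role of this theorem is merely to collect the numerical consequences.

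First, I would invoke Lemma \ref{lem_sad-codes}, specialized to $p=2$: the value $\s_{\le d}(C_2^r) - 1$ equals the largest integer $n$ for which there exists a binary linear $[n,n-r]$-code of minimum distance at least $d+1$. Equivalently, $\s_{\le d}(C_2^r)$ is the smallest $n$ such that every binary $[n,n-r]$-code has minimum distance at most $d$. Phrased this way, each required value can be read off directly from the co-dimension-$r$ diagonal of any table of optimal $[n,k,d]_2$-code parameters.

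Second, I would dispose of the ``stable'' entries at the right end of each line, that is, the assertions of the form $\s_{\le d}(C_2^r) = r+1$ for all sufficiently large $d$. These follow at once from \eqref{eq_davA}, which gives $\s_{A,\le r+1}(C_p^r) = r+1$, together with the trivial lower bound $\s_{\le d}(C_2^r) \ge r+1$ (a sequence consisting of a basis has no non-empty $A$-weighted zero-subsum) and the monotonicity $\s_{\le d'}(G) \le \s_{\le d}(G)$ for $d \le d'$ recorded in Section \ref{sec_general}. This explains why each line settles down to the value $r+1$ and fixes the threshold at which it does so.

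Third, for each intermediate entry I would consult Grassl's table \cite{grassl} or MinT \cite{mint} to read off the maximum $n$ admitting a binary $[n,n-r,d+1]_2$-code; adding one yields $\s_{\le d}(C_2^r)$. For instance, the claim $\s_{\le 4}(C_2^7) = 12$ translates into the twin statement that an $[11,4,5]_2$-code exists but no $[12,5,5]_2$-code exists, both of which are recorded in the standard tables. The main obstacle is not conceptual but purely organizational: one must identify the correct diagonal $n-k = r$ for each pair $(r,d)$, read off the entry, and verify the passage from the tabulated value to the stable value $r+1$. Since the relevant code-parameter data is comprehensively tabulated and well established, this bookkeeping step is routine.
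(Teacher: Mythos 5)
Your proposal is correct and follows essentially the same route as the paper: the paper itself obtains these values by the translation of Lemma \ref{lem_sad-codes} combined with the tabulated optimal parameters of binary linear codes from \cite{grassl} and \cite{mint}, handling the stable tail values $r+1$ via \eqref{eq_davA}, exactly as you describe. Your explicit worked example for $\s_{\le 4}(C_2^7)=12$ (existence of an $[11,4,5]_2$-code, non-existence of a $[12,5,5]_2$-code) is the correct instantiation of that translation.
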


We end the section by some additional discussion of the case $p=3$ and $d=3$, which is particularly popular. In this case only, we also include a somewhat detailed discussion of asymptotic bounds. We omit such a discussion in the general case; again we refer to \cite{bierbrauer-edel} for additional information.

As mentioned in the introduction Godinho, Lemos, and Marques \cite{GLM} studied some plus-minus weighted zero-sum constants. For groups of exponent $3$, the sets of weights $\{+1, -1\}$ and $\{1, \dots, \exp(G)- 1\}$ are equivalent. Thus, our constant $\s_{A, \le 3} (C_3^r)$ coincides with their $\eta_{A}(C_3^r)$; and as shown there (see Propositions 1 and 2 in \cite{GLM})  their other constants $\s_{A} (C_3^r)$ and $\mathsf{g}_A(C_3^r)$ can be expressed in terms of $\eta_{A}(C_3^r)$, namely $\s_{A} (C_3^r) = \mathsf{g}_A(C_3^r) =  2 \eta_{A}(C_3^r) - 1$.

The results in Theorem \ref{thm_expl_3}  improve on their results, giving the exact value of $\eta_A(C_3^5)$ and $\eta_A(C_3^6)$ in addition. Moreover, in the same way we can obtain the bounds $113 \le \eta_{A}(C_3^7) \le 137$, $249 \le \eta_{A}(C_3^8) \le 387$, $533 \le \eta_{A}(C_3^9) \le 1038$, $1217 \le \eta_{A}(C_3^{10}) \le 2817$, and in fact MinT would contain explict values up to $r=17$, in part stemming from results on cap sets (see the respective entries in MinT for precise references).  

Moreover, using the link established in Lemma \ref{lemma_cap} and using lower bounds on the size of caps in ternary spaces, we get that for sufficiently large $r$ one has
\[\s_{A, \le 3} (C_3^r) \ge 2.217^r\]
and indeed one could take a slightly larger constant (see \cite{bierbrauer-edel} for details). For large $r$ this is considerably better than the bound given in \cite{GLM} (see Propositions 3 and 4 there).
For an upper bound we recall that Bateman and Katz \cite{bateman-katz} recently showed that the maximal size of a cap set in a ternary affine space of dimension $r$ is $O(3^r/ r^{1 + \epsilon})$ for some universal $\epsilon>0$.  This is clearly also an upper bound for the size of a cap set in a ternary projective space of dimension $r-1$  and so
\[\s_{A, \le 3} (C_3^r) = O(3^r/ r^{1 + \epsilon}).\]
The gap between upper and lower bound is significant and even conjecturally it is not at all clear what should be the actual order of magnitude of $\s_{A, \le 3} (C_3^r)$ as $r$ tends to infinity, while the problem, in the equivalent formulation for cap sets, received considerable attention.

\section{All multi-wise fully-weighted Davenport constants for some groups}
\label{sec_allmultiwise}

In the current section we establish the value of all fully-weighted multi-wise constants for certain groups, that is for some $G$ and $A = \{ 1 , \dots , \exp(G) - 1 \}$ we determine $\Dav_{A,m}(G)$ for each $m$. We make use of the results for $\s_{A, \le d}(G)$ established before. The group we consider are on the one hand  elementary $p$-groups of rank at most two, and on the other hand $C_3^3$. We recall that a solution to this  problem for $C_2^r$ for $r\le 5$ is also know (see \cite[Section 7]{FS}).

\begin{theorem}
Let $p$ be a prime number.
\begin{enumerate}
\item $\Dav_{A,m}(C_p) = 2m$, in particular $m_A(C_p)=1$ and $\Dav_{A,0}(C_p)=0$.
\item $\Dav_{A,m}(C_p^2) = 3m$ for $m \le \lceil p/3 \rceil$ and $\Dav_{A,m}(C_p^2) = 2m + \lceil p/3 \rceil$ for $m > \lceil p/3 \rceil$, in particular  $m_A(C_p)= \lceil p/3 \rceil$ and $\Dav_{A,0}(C_p)= \lceil p/3 \rceil$.
\end{enumerate}
\end{theorem}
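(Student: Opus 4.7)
My plan is to handle the two parts separately; both reduce to an induction using Lemma \ref{lem_ub_rec} together with matching lower-bound constructions. For Part (1), the lower bound $\Dav_{A,m}(C_p) \ge 2m$ would follow from Lemma \ref{lem_lb_general} once I note that $\e_A(C_p) = 2$. For the upper bound, I would take the base case $\Dav_{A,1}(C_p) = 2$ from \eqref{eq_davA} and apply Lemma \ref{lem_ub_rec} with $d = 2$: using $\s_{A, \le 2}(C_p) = 2$ from Lemma \ref{lem_sa2_p}, this gives the recursion $\Dav_{A,m+1}(C_p) \le \Dav_{A,m}(C_p) + 2$, so induction yields $\Dav_{A,m}(C_p) \le 2m$.

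For the upper bound in Part (2), the same strategy applies with two regimes. Starting from $\Dav_{A,1}(C_p^2) = 3$ (by \eqref{eq_davA}), for $1 \le m \le \lceil p/3 \rceil$ I would iterate Lemma \ref{lem_ub_rec} with $d = 3$, using $\s_{A, \le 3}(C_p^2) = 3$ from \eqref{eq_davA}; this yields $\Dav_{A,m}(C_p^2) \le 3m$. At $m = \lceil p/3 \rceil$ I would switch to $d = 2$, using $\s_{A, \le 2}(C_p^2) = p+2$ from Lemma \ref{lem_sa2_p}; the inequality $3 \lceil p/3 \rceil \ge p$ ensures the transition is clean, and the iteration then gives $\Dav_{A,m}(C_p^2) \le 2m + \lceil p/3 \rceil$ for $m > \lceil p/3 \rceil$.

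For the matching lower bounds on $\Dav_{A,m}(C_p^2)$, I would use the key geometric observation that any three elements of $C_p^2$ lying in three distinct one-dimensional subspaces are linearly dependent with \emph{all} coefficients non-zero modulo $p$, and hence form an $A$-weighted zero-sum of length exactly $3$. For $m \le \lceil p/3 \rceil$ I would take a sequence of length $3m-1$ by choosing one element from each of $3m-1$ distinct one-dimensional subspaces of $C_p^2$, which is feasible since $3m - 1 \le p + 1$, the total number of such subspaces. Every $A$-weighted zero-subsum of this sequence then has length at least $3$, so at most $\lfloor (3m-1)/3 \rfloor = m - 1$ disjoint such subsums exist, giving $\Dav_{A,m}(C_p^2) \ge 3m$.

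For $m > \lceil p/3 \rceil$ I would construct a sequence of length $2m + \lceil p/3 \rceil - 1$ consisting of a ``full transversal'' $S_0$ with one element from each of the $p+1$ one-dimensional subspaces, together with $s$ additional copies of a fixed non-zero element $v_1$, with $s$ chosen so that the total length matches. The disjoint $A$-weighted zero-subsums split naturally into length-$2$ pairs of $v_1$'s and length-$3$ zero-subsums within the remaining transversal part, of total count $\lfloor (1+s)/2 \rfloor$ plus either $\lfloor p/3 \rfloor$ or $\lfloor (p+1)/3 \rfloor$ depending on whether a copy of $v_1$ is left over. The main obstacle will be the case analysis modulo $3$: the appropriate value of $s$ and its parity depend on the residue of $p$ modulo $3$ (with $p = 3$ handled as a boundary case), and I must verify in each situation that the total number of disjoint zero-subsums is exactly $m - 1$, yielding $\Dav_{A,m}(C_p^2) \ge 2m + \lceil p/3 \rceil$.
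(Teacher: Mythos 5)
Your argument is correct, and for Part (1) and for both upper bounds in Part (2) it coincides with the paper's proof: the paper likewise combines $\Dav_{A,1}(C_p^r)=r+1$ from \eqref{eq_davA}, $\s_{A,\le 2}$ from Lemma \ref{lem_sa2_p}, $\s_{A,\le 3}(C_p^2)=3$, and the recursion of Lemma \ref{lem_ub_rec}. Where you genuinely diverge is in the lower bounds for $C_p^2$. For $m\le\lceil p/3\rceil$ the paper simply cites Lemma \ref{lem_lb_general} with $d=3$ and $\ell=3m\le \s_{A,\le 2}(C_p^2)=p+2$; your transversal of $3m-1$ distinct lines is exactly the extremal sequence implicit in that lemma, so this is the same argument made explicit. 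For $m>\lceil p/3\rceil$ the paper instead applies Lemma \ref{lem_lb_+2}, which gives $\Dav_{A,m+1}\ge\Dav_{A,m}+2$ in one stroke and immediately propagates the bound $3\lceil p/3\rceil$ upward; you build a single global extremal sequence (full transversal plus $s$ extra copies of $v_1$) and count its disjoint zero-subsums directly. Your route works — I checked that with $s=2m+\lceil p/3\rceil-p-2\ge 0$ the maximum number of disjoint $A$-weighted zero-subsums comes out to exactly $m-1$ in each residue class of $p$ modulo $3$ — but it costs you two things the paper's route avoids: (i) the mod-$3$ case analysis you flag, and (ii) a point you state only as ``split naturally,'' namely that the greedy decomposition is optimal. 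For (ii) you need to observe that any zero-subsum of your sequence either consists of two copies of $v_1$ or has length at least $3$ (no length-$1$ or length-$2$ relations exist among elements of distinct lines, and $av_1+bv_1+cw=0$ with $c\ne 0$ is impossible), and then run the small optimization showing that $k$ pairs and $j$ longer subsums satisfy $k+j\le\lfloor(1+s)/2\rfloor+\lfloor(p+1+s-2\lfloor(1+s)/2\rfloor)/3\rfloor$. What your approach buys in exchange is a self-contained, concrete extremal example; what the paper's buys is brevity and reusability of Lemma \ref{lem_lb_+2} (which it also uses for $C_3^3$).
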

\begin{proof}
The statement for cyclic groups is a direct consequence of Lemmas \ref{lem_lb_general}, \ref{lem_ub_rec}, and  \ref{lem_sa2_p}.

For $C_p^2$ we note that $\s_{A,\le 2}(C_p^2)=p+2$  by Lemma \ref{lem_sa2_p} and $\s_{A,\le 3}(C_p^2)=3$.
Thus, $\Dav_{A,m}(C_p^2) \le 3m$ for each $m$ by Lemma \ref{lem_ub_rec} and $\Dav_{A,m}(C_p^2) \ge 3m$ for $3m \le p+2$ by Lemma \ref{lem_lb_general}.
Note that $3m \le p+2$ is equivalent to $m \le \lceil p/3 \rceil$.

Now, suppose $m >  \lceil p/3 \rceil$. Write $m = \ell +  \lceil p/3 \rceil$. We need to show $\Dav_{A,m}(C_p^2) = 2m + \lceil p/3 \rceil$, that is $\Dav_{A,m}(C_p^2) = 2 \ell  + 3 \lceil p/3 \rceil$. That this is a lower bound follows from $\Dav_{A,\lceil p/3 \rceil}(C_p^2) =  3 \lceil p/3 \rceil$ and Lemma \ref{lem_lb_+2}.
To show this is an upper bound, let $S$ be a sequence of length $2 \ell  + 3 \lceil p/3 \rceil$. Since $\s_{A, \le 2}(C_p^2)= p + 2$, the sequence $S$ has (at least) $\ell$ disjoint $A$-weighted zero-subsums of length  at most two; let us denote the corresponding sequences by $T_1, \dots, T_{\ell}$. Then the sequence $R=(T_1 \dots T_{\ell})^{-1}S$ has length at least $3 \lceil p/3 \rceil$. Thus, since $\s_{A, \le 2}(C_p^2)= 3$, the sequence $R$ has at least $\lceil p/3 \rceil$ disjoint $A$-weighted zero-subsums. Consequently, we have at least $\ell + \lceil p/3 \rceil=m$ disjoint $A$-weighted zero-subsums of $S$. This shows that $\Dav_{A,m}(C_p^2) \le 2m + \lceil p/3 \rceil$.
\end{proof}

To complement the result for elementary $p$-groups of rank at most two, we  consider the problem for $C_3^3$. For the values of all the classical multi-wise Davenport constants for $C_3^3$ we refer to \cite{BSP}. The result below shows an interesting phenomenon, namely that the difference between $\Dav_{A,m+1}(G) $ and $\Dav_{A,m}(G) $ is not necessarily non-increasing.  

\begin{theorem}
We have $\Dav_{A,1}(C_3^3) = 4$, $\Dav_{A,2}(C_3^3) = 7$, $\Dav_{A,3}(C_3^3) = 9$, $\Dav_{A,4}(C_3^3) = 12$, and $\Dav_{A,m}(C_3^3) = 4 + 2m$ for $m\ge 5$. In particular, $\Dav_{A,0}(C_3^3) = 4$ and $m_{A}(C_3^3) = 5$.
\end{theorem}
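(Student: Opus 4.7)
The plan is to determine $\Dav_{A,m}(C_3^3)$ by handling the small cases $m \in \{1,2,3,4,5\}$ individually and then closing off $m \ge 5$ by induction, based on the values $\s_{A,\le 2}(C_3^3) = 14$ and $\s_{A,\le 3}(C_3^3) = 5$ from Theorem \ref{thm_expl_3}, together with Lemmas \ref{lem_ub_rec} and \ref{lem_lb_+2} and the base $\Dav_{A,1}(C_3^3) = 4$ from \eqref{eq_davA}. Throughout I will exploit the observation that a sequence over $C_3^3$ containing no $0$, no repeated element, and no pair $g,-g$ has no $A$-weighted zero-subsum of length $\le 2$, so all its $A$-weighted zero-subsums have length $\ge 3$.

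For the lower bounds, two explicit constructions do most of the work; the remaining cases follow from iterating Lemma \ref{lem_lb_+2}. For $m=2$, the sequence $S_0 = e_1 \cdot e_2 \cdot e_3 \cdot (e_1+e_2) \cdot (e_2+e_3) \cdot (e_1+e_3)$ of length $6$ has as its only length-$3$ $A$-weighted zero-subsums the three ``coordinate triangles'' $\{e_i, e_j, e_i+e_j\}$ (these being the only linearly dependent triples among the six projective classes represented), and these pairwise share a vertex; since $S_0$ admits no length-$\le 2$ zero-subsum, the only way two disjoint zero-subsums could fit in length $6$ would be as a $(3,3)$-partition, which is excluded. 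This yields $\Dav_{A,2}(C_3^3) \ge 7$. For $m=4$, any length-$11$ sequence whose terms represent $11$ distinct projective classes in $C_3^3$ has every $A$-weighted zero-subsum of length $\ge 3$, so four disjoint ones would require total length $\ge 12 > 11$, giving $\Dav_{A,4}(C_3^3) \ge 12$. Lemma \ref{lem_lb_+2} then yields $\Dav_{A,3}(C_3^3) \ge 9$ from $\Dav_{A,2}(C_3^3) = 7$, and $\Dav_{A,m}(C_3^3) \ge 4 + 2m$ for all $m \ge 4$ from $\Dav_{A,4}(C_3^3) = 12$.

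For the upper bounds, the general template is: given a sequence $S$ of the required length, either $S$ contains $0$, a repeated element, or a pair $g, -g$, in which case we extract an $A$-weighted zero-subsum of length $\le 2$ and invoke the previously established value of $\Dav_{A,m-1}(C_3^3)$ on the remainder; or else the elements of $S$ represent pairwise distinct projective classes, so every $A$-weighted zero-subsum has length $\ge 3$. The bound $\Dav_{A,2}(C_3^3) \le 7$ is immediate from Lemma \ref{lem_ub_rec} with $d=3$. For $m \ge 4$, Lemma \ref{lem_ub_rec} with $d=2$ gives $\Dav_{A,m+1}(C_3^3) \le \max\{\Dav_{A,m}(C_3^3) + 2, 14\}$, which equals $\Dav_{A,m}(C_3^3) + 2$ as soon as $\Dav_{A,m}(C_3^3) \ge 12$; together with the lower bound this handles $m \ge 5$. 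The substantive cases are the distinct-projective sub-case of $m=3$ (length $9$) and $m=4$ (length $12$), where the number and minimum length of the required zero-subsums force a partition of the sequence into collinear triples in the projective plane $\mathrm{PG}(2,3)$ (which has exactly $13$ points). For $m=4$, the $12$ points are $\mathrm{PG}(2,3) \setminus \{p\}$ for some $p$, and the four lines through $p$ (which pairwise meet only at $p$) partition them into four collinear triples. For $m=3$, the $9$ points form the complement of some $4$-subset of $\mathrm{PG}(2,3)$; since $\mathrm{PGL}(3,3)$ acts with exactly three orbits on such $4$-subsets (classified by the maximum number of collinear points among them, which is $4$, $3$, or $2$), it suffices to exhibit a partition into three collinear triples for one representative of each orbit: four collinear points (complement is the affine plane $\mathrm{AG}(2,3)$, each of whose four parallel classes gives a partition), three collinear plus one off (an explicit partition using the line joining the off-line removed point to the unique non-removed point of the collinear line, together with its two parallels), and the standard frame $\{e_1,e_2,e_3,e_1+e_2+e_3\}$ (direct computation).

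The main technical obstacle is the $m=3$ case analysis, and within it the $4$-cap orbit: the $9$-point complement of a $4$-cap carries no transparent affine structure, so the partition into three collinear triples must be exhibited by direct computation, even though projective equivalence reduces the work to one configuration. The proof also highlights the ``interesting additional phenomenon'' flagged in the introduction, namely that the gap $\Dav_{A,4}(C_3^3) - \Dav_{A,3}(C_3^3) = 3$ strictly exceeds $\e_A(C_3^3) = 2$; this is not a subtle group-theoretic effect but simply a pigeonhole consequence of $|\mathrm{PG}(2,3)| = 13$, so eleven distinct projective classes cannot support four disjoint subsums of length $\ge 3$ each.
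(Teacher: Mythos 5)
Your proposal is correct and follows essentially the same route as the paper: the same reduction via Lemmas \ref{lem_lb_general}, \ref{lem_ub_rec} and \ref{lem_lb_+2} to the two substantive claims $\Dav_{A,2}(C_3^3)\ge 7$ (with the identical extremal sequence) and $\Dav_{A,3}(C_3^3)\le 9$, together with the same trichotomy for the four omitted points (four collinear, exactly three collinear, a cap/frame), which the paper handles algebraically while explicitly noting it is the projective classification you use. The one step you defer to ``direct computation''---partitioning the nine-point complement of the frame $\{e_1,e_2,e_3,e_1+e_2+e_3\}$ into three collinear triples---does succeed (the three lines of the plane disjoint from the frame form a triangle covering exactly those nine points, and assigning each vertex to one of its two lines yields the partition), so there is no genuine gap.
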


\begin{proof}
First, we use results we obtained earlier, to reduce the problem  to showing $\Dav_{A,2}(C_3^3) \ge 7$ and $\Dav_{A,3}(C_3^3) \le 9$.

By \eqref{eq_davA} we get $\Dav_{A,1}(C_3^3)=4$. By Lemma  \ref{lem_lb_general}, with $d=3$ and since $\s_{A,\le 2}(C_3^3) = 14$, see Lemma \ref{lem_sa2_p}, we get the lower bounds for $\Dav_{A,m}(C_3^3)$ for $m \in \{ 3, 4\}$. Then, by Lemma \ref{lem_lb_+2} we also get the lower bounds for each $m \ge 5$.

Since by Theorem \ref{thm_expl_3} we have $\s_{A,\le 3}(C_3^3) = 5$, it follows  by Lemma \ref{lem_ub_rec} that $\Dav_{A,2}(C_3^3) \le 7$.  Moreover, $\s_{A,\le 3}(C_3^3) = 5$ and Lemma \ref{lem_ub_rec} shows that $\Dav_{A,3}(C_3^3) = 9$ implies that $\Dav_{A,4}(C_3^3) \le 12$, and furthermore  $\s_{A,\le 2}(C_3^3) = 14$, see Lemma \ref{lem_sa2_p}, and Lemma \ref{lem_ub_rec} then show $\Dav_{A,m}(C_3^3) \le 4 + 2m$ for $m\ge 5$.

Consequently, it only remains to show that $\Dav_{A,2}(C_3^3) \ge 7$ and $\Dav_{A,3}(C_3^3) \le 9$.

To see that $\Dav_{A,2}(C_3^3)  \ge 7$ we consider the sequence $e_1 e_2 e_3 (e_1+e_2) (e_1+e_3) (e_2+e_3)$. It cannot have an $A$-weighted zero-subsum of length at most $2$, thus if it had two disjoint $A$-weighted zero-subsums they would both be of length $3$. One of the two subsums has to contain at least two of $e_1,e_2,e_3$, say it contains $e_1$ and $e_2$. Then the third element is necessarily $e_1+e_2$. However, the three other elements  $e_3,(e_1+e_3),(e_2+e_3)$ do not have a $A$-weighted zero-subsum. Thus,  $e_1e_2 e_3 (e_1+e_2) (e_1+e_3) (e_2+e_3)$ does not have two disjoint $A$-weighted zero-subsums. Consequently $\Dav_{A,2}(C_3^3)  > 6$.

Now,  we show $\Dav_{A,3}(C_3^3) \le 9$. We reduce this problem to the problem of checking whether three specific sequences have three disjoint $A$-weighted zero-subsums.

First, we recall that any sequence of length $9$ that contains $0$ or an element more than once or an element and its inverse,  has an $A$-weighted subsum of length at most two, and thus, since $\Dav_{A,2}(C_3^3) \le 7$, it has three disjoint $A$-weighted zero-subsums.
Thus, we can restrict to considering squarefree sequences of length $9$ where each cyclic subgroup of $C_3^3$ contains at most one element.

Second, we recall that replacing an element occurring in a sequence by its inverse has no effect on the number of disjoint $A$-weighted zero-subsums.

Thus, we can  restrict to considering subsequences of
\begin{equation*}
\begin{split}
e_1e_2e_3 \ (e_1+e_2)(e_1+e_3)(e_2+e_3) \ (e_1-e_2)(e_1-e_3)(e_2-e_3) \\ (e_1+e_2+e_3) (e_1+e_2-e_3)(e_1-e_2+e_3)(e_1-e_2-e_3);
\end{split}
\end{equation*}
this sequence has length $13$ and contains one non-zero element from each cyclic subgroup.

A subsequence of length $9$ is characterized by the $4$ elements of the above $13$ that it does not contain. But, we do not need to check all sequences resulting from omitting each possible choice of  four elements, since the problem is invariant under isomorphisms of the group.

We argue there are only three cases to consider. Let $g_1, g_2, g_3, g_4$ be four distinct non-zero elements (none the inverse of each other). While below we give a purely algebraic treatment, we remark that we could also consider this as a problem in the two-dimensional ternary projective space; the three cases being four points on a line, three (yet not four) on a line, and no three co-linear points (i.e., a cap-set).    

Case 1: $g_1, g_2, g_3, g_4$ do not generate $C_3^3$. In this case, we can assume that $g_3 = g_1 + g_2$ and $g_4 = g_1 - g_3$. (Certainly, $g_3,g_4$ can be written as a linear combination of $g_1, g_2$ and since we are free to choose signs, this is the only possibility we need to consider.)

Case 2: $g_1, g_2, g_3, g_4$ generate $C_3^3$, but there is some $g_j$, say $g_3$, such that the set $\{g_1,g_2,g_3,g_4\}\setminus \{g_3\}$ does not generate the group.
We observe that $g_1, g_2, g_3$ is a generating set; $g_1, g_2$ are independent by assumption, while $g_3$ is not in the subgroup generated $g_1$ and $g_2$. Moreover, $g_4$ is an element of the group generated by $g_1$ and $g_2$, and as we can ignore signs, we can assume $g_4 = g_1 - g_2$.

Case 3: $g_1,g_2,g_3,g_4$ generate $C_3^3$, and in fact $\{g_1,g_2,g_3,g_4\}\setminus \{g_j\}$ generates the group for each $j$. We note that $g_1,g_2,g_3$ is a generating set and $g_4= a_1g_1 + a_2g_2+ a_3g_3$. Since $\{g_1,g_2,g_3,g_4\}\setminus \{g_j\}$ is a generating set for each $j$ it follows that all $a_i$ are non-zero and therefore, as signs are irrelevant, we can assume $g_4 = - (g_1 + g_2 + g_3)$.

Since isomorphisms preserve $A$-weighted zero-subsums we can choose for the independent elements whatever independent elements we like. In case 1 we choose $g_1 = e_2$ and $g_ 2 = e_3$. Thus after removing the four elements $g_1, g_2, g_3, g_4$ the following sequence remains:
\[e_1(e_1+e_2)(e_1+e_3) (e_1-e_2)(e_1-e_3) (e_1+e_2+e_3) (e_1+e_2-e_3)(e_1-e_2+e_3)(e_1-e_2-e_3).\]
In case 2 and 3 we chose $g_1 = e_1 - e_2 + e_3$ and $g_2 = e_1 + e_ 2 - e_3$ and $g_3 = e_1 - e_2 - e_3$.

Thus after removing the four elements $g_1, g_2, g_3, g_4$ the following sequence remains in case 2:
\[e_1e_2e_3  (e_1+e_2)(e_1+e_3)(e_2+e_3)  (e_1-e_2)(e_1-e_3)  (e_1+e_2+e_3) .\]

And, in case 3:
\[e_1e_2e_3 (e_1+e_2)(e_1+e_3)  (e_1-e_2)(e_1-e_3)(e_2-e_3)  (e_1+e_2+e_3).\]

Now, it remains to check that these three sequences of length $9$ each have $3$ disjoint $A$-weighted zero-subsums. If this is established it follows from the above arguments that in fact each sequence of length $9$ over $C_3^3$ has this property, and thus $\Dav_{A,3}(C_3^3)\le 9$.

We give the decompositions into $A$-weighted subsums of length $3$, where for clarity we write each element in parenthesis:
\[
\begin{split} 
& (e_1)   + (e_1+e_2) + (e_1-e_2), (e_1+e_3) + (e_1+e_2+e_3) + (e_1-e_2+e_3), \\
& (e_1-e_3) + (e_1+e_2-e_3) + (e_1-e_2-e_3) 
\end{split}
\]

\[(e_1)   + (e_2+e_3) - (e_1+e_2+e_3), (e_2) - (e_1-e_2) + (e_1+e_2), ( e_3) + (e_1+e_3) - (e_1-e_3) \]

\[ (e_1)   + (e_3) - (e_1+e_3) ,  (e_2) + (e_1+e_2) - (e_1-e_2), (e_1-e_3) + (e_2-e_3) - (e_1+e_2+e_3)\]

\end{proof}

\section{Some asymptotic results}
\label{sec_asymptotic}

In the current section we complement the earlier results focused on small groups with results focused on elementary $p$-groups of larger rank.  Again, we use the link to coding  theory explained in Section \ref{sec_codesgeneral}.

We use some ad-hoc terminology based on the one introduced in \cite{PS}. A function $f:[0,s]\rightarrow [0,1]$, for $0< s \le 1$, is called $p$-upper-bounding if it is non-increasing, continuous, and each $[n,k,d]_{p}$-code with $d/n$ in $[0,s]$ satisfies
\[
\frac{k}{n}\leq f\left(\frac{d}{n}\right)
\]
And, $f$ is called asymptotically $p$-upper-bounding if $\frac{k}{n}\lesssim  f\left(\frac{d}{n}\right)$
for $[n,k,d]_{p}$-codes with sufficiently large $n$.

Thus, (asymptotically) $p$-upper-bounding functions are the functions in the (asymptotic) upper bounds for the rate $k/n$ of a $p$-linear code as a function of its normalized minimal distance $d/n$.

The following is essentially a reformulation of Lemma \ref{lem_sad-codes}, in a way more suitable for the current applications.

\begin{lemma}
\label{lem_bounding_basic}
Let $f$  be a $p$-upper-bounding function, and let $d, n, r\in \N$ with $2\leq d \leq n-1$ and
\[
\frac{n-r}{n} > f \left(\frac{d+1}{n}\right)
\]
then $\s_{A,\leq d}(C_{p}^{r})\le n$. Moreover, the same holds true for $f$ an asymptotically $p$-upper-bounding function and sufficiently large $n$ such that the inequality holds uniformly in $n$ (that is $\frac{n-r}{n} >  f \left(\frac{d+1}{n}\right)  + \varepsilon$ for some $\varepsilon>0$ independent of $n$).
\end{lemma}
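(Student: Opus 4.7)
The plan is to argue by contradiction, converting the hypothesis into the existence of a certain code via Lemma~\ref{lem_sad-codes} and then deriving a contradiction from the $p$-upper-bounding property of $f$.

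I would first suppose that $\s_{A,\le d}(C_p^r) > n$, which by Lemma~\ref{lem_sad-codes} yields some $n^* \ge n$ together with an $[n^*, n^*-r]_p$-code of minimal distance at least $d+1$. If $n^* > n$, iterated shortening of this code (equivalently, iterated removal of a column of its parity-check matrix chosen so as to preserve the full rank $r$) reduces the length by one while preserving the parameter $r$ and the minimal-distance lower bound. This step is available because $n > r$, as is forced by the hypothesis $\frac{n-r}{n} > f\left(\frac{d+1}{n}\right) \ge 0$. After $n^* - n$ such reductions one obtains an $[n, n-r, d']_p$-code with $d' \ge d+1$.

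The $p$-upper-bounding property of $f$, applied to this code, then yields
\[
\frac{n-r}{n} \le f\left(\frac{d'}{n}\right),
\]
and since $f$ is non-increasing and $d'/n \ge (d+1)/n$, this gives $\frac{n-r}{n} \le f\left(\frac{d+1}{n}\right)$, contradicting the hypothesis. Implicit throughout is that $(d+1)/n$ lies in the domain $[0,s]$ of $f$, for otherwise the hypothesis itself would be vacuous; this also takes care of $d'/n$, since one may (if needed) weaken $d'$ back down to $d+1$ before invoking the bound.

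The asymptotic case is structurally identical: the same code of length $n$ is extracted from the contradiction hypothesis, and now the asymptotic upper bound gives $\frac{n-r}{n} \le f\left(\frac{d+1}{n}\right) + o(1)$ as $n \to \infty$; for $n$ large enough the $o(1)$ term drops below $\varepsilon$, contradicting the uniform lower bound $\frac{n-r}{n} > f\left(\frac{d+1}{n}\right) + \varepsilon$. The main (and mild) obstacle in executing this plan is verifying that the shortening step can indeed be carried out so as to normalise the length to exactly $n$ while keeping both the parameter $r$ and the minimal-distance threshold intact; this reduces to observing that a full-rank $r \times n^*$ matrix with $n^* > r$ must have a column whose removal preserves rank, which is standard.
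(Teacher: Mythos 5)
Your proof is correct; it is essentially the paper's argument run in the contrapositive. Where the paper takes an arbitrary sequence of length $n$ (reduced, via Lemma \ref{lem_sadd}, to one whose terms generate $C_p^r$), forms the code $\mathcal{C}_S$ of Lemma \ref{lem_codematrix}, and concludes from the upper-bounding inequality that its minimal distance $e$ is at most $d$ --- whence Lemma \ref{lem_codebasic} supplies the required short $A$-weighted zero-subsum --- you instead assume $\s_{A,\le d}(C_p^r)>n$ and invoke Lemma \ref{lem_sad-codes}. Because that lemma only produces a code of the \emph{maximal} admissible length $n^*\ge n$, you must pay for this choice with the shortening step; your justification of it is sound (the hypothesis forces $n>r$, so a rank-preserving column deletion always exists and keeps the dimension at length minus $r$ while not decreasing the minimal distance), but the paper's direct formulation makes it unnecessary. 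The two routes are otherwise identical, including the treatment of the asymptotic case. One point you share with the paper rather than improve on: the upper-bounding property is stated for the exact minimal distance $d'$ with $d'/n$ in the domain $[0,s]$ of $f$, so passing from $d'\ge d+1$ to the value $f\bigl(\tfrac{d+1}{n}\bigr)$ via monotonicity implicitly assumes $d'/n\le s$; this is harmless for the concrete bounding functions used later but is glossed over in both proofs.
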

\begin{proof}
Let $S=g_1 \dots g_n$ be an arbitrary sequence over $C_{p}^{r}$. By Lemma \ref{lem_sadd} we know that $\mathsf{s}_{A, \le d} (C_p^r) \ge \mathsf{s}_{A, \le d} (C_p^s)$ for $s \le r$, and we thus can assume that the $g_i$ generate $C_p^r$. 

We choose some basis of $C_{p}^{r}$ and $C_{p}^{n}$. We apply Lemma \ref{lem_codematrix} with the sequence $S$ to get an $[n,n-r]_{p}$-code $\mathcal{C}_{S}\subset C_{p}^{n}$. Let $e$ be the minimal distance of $\mathcal{C}_{S}$, i.e $\mathcal{C}_{S}$ is an $[n,n-r,e]_{p}$-code. But by assumption since $f$ is upper-bounding and 
\[
\frac{n-r}{n} > f \left( \frac{d+1}{n} \right)
\]
an $[n,n-r,d+1]_{p}$-code cannot exist. This implies that $e< d+1$, or equivalently $d \geq e$. We conclude by applying Lemma \ref{lem_codebasic}, which shows that $S$ possesses an $A$-weighted zero-sum subsequence of length $e$.

The additional claim, for asymptotically upper-bounding functions, is immediate in view of the just given argument and the fact that our condition just  negates $\frac{n-r}{n}\lesssim  f\left(\frac{d+1}{n}\right)$
\end{proof}

We recall the following well-known fact that we need in the subsequent arguments (see, e.g., \cite[Appendix B.3]{MWS}).

\begin{lemma}
\label{lemma_subspaces}
Let $k, n \in \N$ with $n\geq k$. In an $n$-dimensional vector space over a field with $p$ elements, the number of $k$-dimensional subspaces is equal to the $p$-ary binomial coefficient defined as
\[
{n \brack  k}_{p} = \frac{(p^{n}-1)\ldots (p^{n-k+1}-1)}{(p^{k}-1)\ldots (p-1)}
\]
Moreover, the number of $k$-dimensional subspaces containing a fixed $j$-dimensional subspace, $k\geq j$, is equal to
\[
{n - j \brack  k - j}_{p}.
\]
\end{lemma}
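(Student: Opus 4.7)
The plan is to prove both parts by a standard double-counting argument, which is very well-known, but since the lemma is stated as a reference (with a citation to \cite{MWS}) I would sketch it briefly for self-containedness rather than give a long development.

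First I would count ordered linearly independent $k$-tuples in the ambient space in two ways. Directly, the first vector can be any nonzero vector, giving $p^{n}-1$ choices; once $i$ linearly independent vectors are chosen, the next must lie outside their span, giving $p^{n}-p^{i}$ choices. Hence the total number of such ordered $k$-tuples is $\prod_{i=0}^{k-1}(p^{n}-p^{i})$. Alternatively, each $k$-dimensional subspace $V$ has exactly $\prod_{i=0}^{k-1}(p^{k}-p^{i})$ ordered bases by the same count applied inside $V$. Dividing and factoring $p^{i}$ out of each factor in numerator and denominator (which cancels cleanly), one obtains
\[
\frac{\prod_{i=0}^{k-1}(p^{n-i}-1)}{\prod_{i=0}^{k-1}(p^{k-i}-1)} = {n \brack k}_{p},
\]
which gives the first claim.

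For the second claim I would pass to the quotient. Fix a $j$-dimensional subspace $U$ and let $\pi \colon \mathbb{F}_{p}^{n} \to \mathbb{F}_{p}^{n}/U$ be the canonical projection. The assignment $W \mapsto \pi(W)$ is a bijection between the $k$-dimensional subspaces of $\mathbb{F}_{p}^{n}$ containing $U$ and the $(k-j)$-dimensional subspaces of the $(n-j)$-dimensional space $\mathbb{F}_{p}^{n}/U$; the inverse is $V \mapsto \pi^{-1}(V)$. Applying the first part to the quotient space then yields ${n-j \brack k-j}_{p}$ as desired.

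The argument is entirely routine; there is no real obstacle, and the only minor care needed is the cancellation of the $p^{i}$ factors between $p^{n}-p^{i}=p^{i}(p^{n-i}-1)$ and $p^{k}-p^{i}=p^{i}(p^{k-i}-1)$, together with verifying that $W \mapsto \pi(W)$ is indeed a bijection on the relevant subspaces (which follows from the correspondence theorem for vector spaces, since $W \supseteq U = \ker \pi$ means $W = \pi^{-1}(\pi(W))$, and $\dim \pi(W) = \dim W - \dim U$).
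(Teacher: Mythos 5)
Your argument is correct: the double count of ordered linearly independent $k$-tuples (equivalently, ordered bases of $k$-dimensional subspaces) gives the first formula after cancelling the factors $p^{i}$, and the correspondence theorem for the quotient by the fixed $j$-dimensional subspace reduces the second claim to the first. The paper itself gives no proof of this lemma, merely citing \cite[Appendix B.3]{MWS}; your proof is the standard one found there, so there is nothing to reconcile.
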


Now, we state one of the main results of this section, a lower bound for the fully-weighted multi-wise Davenport constants for elementary $p$-groups of large rank.

\begin{proposition}
\label{propgeneral}
Let $m \in \N$ and let $p$ be a prime number. Then, for sufficiently large $r$, with $A = \{1, \dots, p-1\}$,
\[
\Dav_{A,m}(C_{p}^{r})\geq \log p \frac{m}{\log(1 + m(p-1))}r.
\]
\end{proposition}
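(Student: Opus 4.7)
My approach is to use the coding-theoretic correspondence from Section \ref{sec_codesgeneral} combined with a probabilistic counting argument over codes of a fixed codimension. By Lemma \ref{lem_codematrix} and the correspondence between disjoint $A$-weighted zero-subsums of a sequence $S = g_1 \dots g_n$ and non-zero codewords with pairwise disjoint supports in the associated code $\mathcal{C}_S \subset \F_p^n$ (discussed at the end of Section \ref{sec_codesgeneral}), it suffices to exhibit, for $n$ slightly below $rm \log p / \log(1+m(p-1))$, a subspace of $\F_p^n$ of codimension $r$ that contains no $m$ non-zero codewords with pairwise disjoint supports; the sequence given by the columns of a parity check matrix then has length $n$ and no $m$ disjoint $A$-weighted zero-subsums, so $\Dav_{A,m}(C_p^r) \ge n+1$.

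For the counting step, I would bound the number of ordered $m$-tuples $(c_1, \dots, c_m)$ of non-zero vectors of $\F_p^n$ with pairwise disjoint supports. Each coordinate $i \in \{1, \dots, n\}$ is either outside all supports or in exactly one support with a chosen value in $\F_p \setminus \{0\}$, giving at most $(1 + m(p-1))^n$ such tuples (an overcount, which includes degenerate tuples where some $c_j = 0$). Any valid tuple is automatically linearly independent, by disjointness of supports, hence spans an $m$-dimensional subspace; so by Lemma \ref{lemma_subspaces} it lies in exactly ${n-m \brack n-r-m}_p$ of the ${n \brack n-r}_p$ subspaces of codimension $r$. Averaging, the expected number of bad $m$-tuples in a uniformly random $(n-r)$-dimensional subspace is at most
\[
(1 + m(p-1))^n \cdot \frac{{n-m \brack n-r-m}_p}{{n \brack n-r}_p}.
\]
A standard double-count of pairs of nested subspaces gives $\frac{{n-m \brack n-r-m}_p}{{n \brack n-r}_p} = \frac{{n-r \brack m}_p}{{n \brack m}_p} = \prod_{i=0}^{m-1} \frac{p^{n-r-i}-1}{p^{n-i}-1}$, and writing $p^{n-r-i}-1 = p^{-r}(p^{n-i}-p^r)$ shows each factor is at most $p^{-r}$, so the whole ratio is at most $p^{-rm}$.

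Combining these estimates, the expected number of bad $m$-tuples is at most $(1 + m(p-1))^n p^{-rm}$, which is strictly less than $1$ whenever $n < rm \log p / \log(1 + m(p-1))$. For $r$ sufficiently large, the largest integer $n$ satisfying this strict inequality automatically meets $n \ge r+m$, so the probabilistic setup is non-degenerate, and some $(n-r)$-dimensional subspace is free of bad tuples. This produces a sequence of length $n$ over $C_p^r$ without $m$ disjoint $A$-weighted zero-subsums, giving $\Dav_{A,m}(C_p^r) \ge n+1 \ge \frac{m \log p}{\log(1+m(p-1))} r$. The principal technical obstacle is the clean bounding of the $p$-binomial ratio by $p^{-rm}$; the rest is routine bookkeeping together with the verification that for large $r$ the rounding does not spoil the stated inequality.
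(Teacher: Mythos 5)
Your argument is essentially identical to the paper's proof: the same coding-theoretic reduction, the same $(1+m(p-1))^n$ count of disjoint-support tuples, the same use of Lemma \ref{lemma_subspaces} to get the factor ${n-m \brack n-r-m}_p / {n \brack n-r}_p \le p^{-rm}$, and the same conclusion that some code of codimension $r$ avoids all bad configurations. The only point to patch is $m=1$: there the factor $m\log p/\log(1+m(p-1))$ equals $1$, so no integer $n$ with $n<r$ can satisfy $n\ge r+m$, and one should instead (as the paper does) dispose of this case directly via $\Dav_{A,1}(C_p^r)=r+1\ge r$.
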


\begin{proof}
For $m=1$ we know by \eqref{eq_davA} that $\Dav_{A,m}(C_{p}^{r})= r+1$. Since $\log p /(\log(1+1(p-1))) \le 1$, the claim follows. Now, we fix a positive integer $m>1$.

Since $m\log p /(\log(1 + m(p-1)))> 1$ and $r$ is sufficiently large,  there is an integer $n$ such that
\[
r+m \leq n < \log p \frac{m}{\log(1 + m(p-1))}r
\]
Recall that by Lemma \ref{lem_sad-codes} we can associate to each sequence $S$ of length $n$ over $C_{p}^{r}$ whose elements generate $C_{p}^{r}$ an $[n,n-r]_p$-code, and indeed we can obtain every such code in this way.

We observe that the condition that  $S$ has $m$ disjoint $A$-weighted zero-sum  subsequences translates to the condition that the associated code contains $m$ non-zero codewords $c_1, \dots, c_m$ such that pairwise intersections of their supports are empty. We call such a code $m$-inadmissible, otherwise it will be called $m$-admissible.

We first produce an upper bound on the total number of $[n,n-r]_p$-codes that are $m$-inadmissible.

By definition any $m$-inadmissible code contains $c_1,\dots,c_m$ with the above mentioned property. These $c_{i}$'s generate an $m$-dimensional vector space; to see this just note that the non-zero coordinates of  each $c_i$ are unique to that element and thus the $c_{m}$'s are certainly independent.

Let $\mathcal{V}$ denote the set of all subsets $\{d_{1},\dots,d_{m}\}\subset C_{p}^{n}\setminus \{0\}$ such that the intersection of the support of $d_{u}$ and $d_{v}$ is empty for all distinct $u,v \in \{1,\dots, m\}$; thus, in particular, all the $d_{i}$'s are distinct.

We note that a code $\mathcal{C}$ is $m$-inadmissible if and only if $V\subset \mathcal{C}$ for some $V\in \mathcal{V}$ (this $V$ is not necessarily unique). Moreover, Lemma \ref{lemma_subspaces} implies that for each $V\in \mathcal{V}$ there are
${n-m \brack n-r-m}_{p}$ codes containing $V$; to see this note that if $V\subset \mathcal{C}$ then $\mathcal{C}$ also contains the vector space generate by $V$, which is $m$-dimensional, and apply Lemma \ref{lemma_subspaces}. It thus  follows that the total number of $m$-inadmissible codes cannot exceed
\[
|\mathcal{V}|{n-m \brack n-r-m}_{p}.
\]
We give a simple estimate for $|\mathcal{V}|$.
An element $\{d_{1},\dots ,d_{m}\}$ of $\mathcal{V}$ can be described by specifying for each $l \in \{1,\dots, n\}$  which of the supports of the $d_i$'s (if any) contains $l$ and  which (non-zero) value the respective coordinate has. Thus, for each $l$ there are $1 + m (p-1)$ possibilities, and consequently $(1 + m (p-1))^n$ is an upper bound for $|\mathcal{V}|$.

We therefore infer that the total number of $m$-inadmissible $[n,n-r]_p$-codes is bounded above by
\[
(1 + m (p-1))^n {n-m \brack  n - r - m  }_p.
\]
Again, by Lemma \ref{lemma_subspaces}, it follows that the ratio of total number of $m$-inadmissible $[n,n-r]_p$-codes divided by total number of $[n,n-r]_p$-codes is bounded above by
\[\begin{aligned}
\frac{(1 + m (p-1))^n {n-m \brack  n - r - m  }_{p}}{{n \brack  n - r  }_{p}} & = (1 + m (p-1))^n \prod_{k=n-m+1}^{n}\frac{p^{k-r}-1}{p^{k}-1} \\
              & \leq  (1 + m (p-1))^n \prod_{k=n-m+1}^{n}\frac{p^{k-r}}{p^{k}}\\
              & =   (1 + m (p-1))^n p^{-rm}\\
              & =  p^{n\log_{p}(1 + m (p-1))-rm}
\end{aligned}\]
Thus, it follow that as soon as $(n\log_{p}(1 + m (p-1))-rm)$ is negative, that is
\[
\frac{n}{r}< \frac{m}{\log_{p}(1 + m (p-1))}
\]
the existence of at least one admissible code is guaranteed.
From this we deduce
\[
\mathsf{D}_{A,m}(C_{p}^{r})\geq \log p \frac{m}{\log(1 + m (p-1))}r
\] for sufficiently large $r$.
\end{proof}

The following result combines Lemmas \ref{lem_ub_rec} and \ref{lem_bounding_basic}.
\begin{lemma}
\label{lem_induct}
Let $m,r \in \N$ and let $p$ be a prime number, and let $A= \{1, \cdots , p-1\}$. Furthermore, let $f$ be an asymptotic upper-bounding function.
\begin{enumerate}
\item If  $\Dav_{A,m}(C_{p}^{r})\leq br$ for each sufficiently large $r$ and $c$ is a solution to the inequality
\[
\frac{b+c-1}{b+c} > f \left(\frac{c}{b+c}\right),
\]
then for each sufficiently large integer $r$, we have $\Dav_{A,m+1}(C_{p}^{r})\leq (b+c)r$.
\item If $\Dav_{A,m}(C_{p}^{r})\lesssim br$ and $c$ is a solution to the inequality
\[
\frac{b+c-1}{b+c}\ge f \left(\frac{c}{b+c}\right),
\]
then we have $\Dav_{A,m+1}(C_{p}^{r}) \lesssim (b+c)r$.
\end{enumerate}
\end{lemma}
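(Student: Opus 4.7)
The plan is to combine Lemma \ref{lem_ub_rec} (which bounds $\Dav_{A,m+1}$ via $\min_{d}\max\{\Dav_{A,m}+d,\ \s_{A,\le d}\}$) with Lemma \ref{lem_bounding_basic} (which turns a condition on $f$ into an upper bound for $\s_{A,\le d}$), choosing $d$ roughly equal to $cr$.

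For part (1), I take $d=\lceil cr\rceil$. Lemma \ref{lem_ub_rec} then yields
\[
\Dav_{A,m+1}(C_p^r)\le \max\{\Dav_{A,m}(C_p^r)+d,\ \s_{A,\le d}(C_p^r)\}.
\]
The first entry is at most $br+\lceil cr\rceil\le (b+c)r+1$ by the assumed bound on $\Dav_{A,m}$. For the second, I apply Lemma \ref{lem_bounding_basic} with target length $n=\lceil(b+c)r\rceil$; the hypothesis to verify is $\frac{n-r}{n}>f\!\left(\frac{d+1}{n}\right)$. As $r\to\infty$ we have $\frac{n-r}{n}\to\frac{b+c-1}{b+c}$ and $\frac{d+1}{n}\to\frac{c}{b+c}$, so the strict hypothesis $\frac{b+c-1}{b+c}>f\!\left(\frac{c}{b+c}\right)$, together with the continuity and monotonicity of $f$, supplies the uniform gap required by the asymptotic version of Lemma \ref{lem_bounding_basic} for all sufficiently large $r$. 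This gives $\s_{A,\le d}(C_p^r)\le n\le(b+c)r+1$, and the additive constant is absorbed by replaying the argument with $n=\lfloor(b+c)r\rfloor$ (which only changes the key ratios by $o(1)$ and is therefore still covered by the strict inequality).

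For part (2), I argue by a small perturbation $(b,c)\mapsto(b',c')=(1+\eta)(b,c)$ for arbitrary $\eta>0$. Under this scaling $\frac{c'}{b'+c'}=\frac{c}{b+c}$, while
\[
\frac{b'+c'-1}{b'+c'}=1-\frac{1}{(1+\eta)(b+c)}>1-\frac{1}{b+c}=\frac{b+c-1}{b+c}\ge f\!\Bigl(\tfrac{c}{b+c}\Bigr)=f\!\Bigl(\tfrac{c'}{b'+c'}\Bigr),
\]
so $(b',c')$ satisfies the strict hypothesis of part (1). Since $\Dav_{A,m}(C_p^r)\lesssim br$ implies $\Dav_{A,m}(C_p^r)\le b'r$ for all sufficiently large $r$, part (1) yields $\Dav_{A,m+1}(C_p^r)\le(b'+c')r=(1+\eta)(b+c)r$ for large $r$. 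As $\eta>0$ was arbitrary, $\Dav_{A,m+1}(C_p^r)\lesssim(b+c)r$.

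The main obstacle will be largely bookkeeping---managing the integer parts $\lceil cr\rceil$, $\lceil(b+c)r\rceil$ and the ``$+1$'' in $f\!\left(\frac{d+1}{n}\right)$---but the continuity of $f$ ensures these $o(1)$ corrections cannot destroy the strict inequality in (1), and the scaling trick absorbs the gap between strict and non-strict hypotheses in (2).
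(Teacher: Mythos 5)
Your proposal is correct, and part (1) is essentially the paper's own argument: choose $d$ of size roughly $cr$, feed the resulting ratio condition into Lemma \ref{lem_bounding_basic} to bound $\s_{A,\le d}(C_p^r)$ by roughly $(b+c)r$, and conclude via Lemma \ref{lem_ub_rec}. The one bookkeeping point is that to land exactly on $(b+c)r$ (the lemma's stated bound, not $(b+c)r+1$) you should take $d=\lfloor cr\rfloor$ and $n=\lfloor (b+c)r\rfloor$ from the start, as the paper does --- the ceiling on $d$ is what produces the stray $+1$ in $\Dav_{A,m}(C_p^r)+d$, so switching only $n$ to a floor does not remove it; with floors everywhere the limiting ratios are unchanged and the strict uniform gap still applies. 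Where you genuinely diverge is part (2). The paper keeps $b$ fixed, bumps $c$ to $c+\varepsilon/2$ (which moves the argument of $f$ upward, so monotonicity of $f$ restores a strict inequality), and then re-runs the splitting argument directly: a sequence of length $(b+c+\varepsilon)r$ loses a zero-subsum of length at most $(c+\varepsilon/2)r$ and what remains still has length at least $(b+\varepsilon/2)r$. You instead scale both parameters by $1+\eta$, observe that this leaves $\tfrac{c}{b+c}$ invariant while strictly increasing $\tfrac{b+c-1}{b+c}$, and invoke part (1) as a black box. Your reduction is cleaner in that it avoids repeating the sequence-splitting argument, at the cost of needing $b>0$ (which is automatic here since $\Dav_{A,m}(C_p^r)\ge r+1$ forces $b\ge 1$) and of the implicit conversion of $\lesssim$ into ``$\le b'r$ for large $r$,'' which you state and which is indeed the intended meaning of $\lesssim$ in the paper. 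Both routes are sound.
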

\begin{proof}
We start by proving 1. Given the assumptions, we have
\[
\frac{(b+c)r-r}{(b+c)r}=\frac{b+c-1}{b+c}>f\left(\frac{c}{b+c}\right)=f\left(\frac{cr}{(b+c)r}\right)\geq f\left(\frac{cr+1}{(b+c)r}\right),\]
where we used that  $f$ is  non-increasing.  
Clearly, the inequality holds uniformly in $r$.
Replacing $\lfloor (b+c)r\rfloor$ by $n$ and $\lfloor cr \rfloor$ by $d$, we obtain (for $r$ sufficiently large and by the continuity of $f$) that
\[
\frac{n-r}{n} > f \left( \frac{d+1}{n} \right).
\]
And, the inequality still holds uniformly. By Lemma \ref{lem_bounding_basic} we have
\[
\s_{A, \le \lfloor cr \rfloor}(C_{p}^{r})\le n = \lfloor (b+c)r\rfloor \leq (b+c)r
\]
and  then by Lemma \ref{lem_ub_rec}
\[
\Dav_{A,m+1}(C_{p}^{r})\leq \min_{i\in \mathbb{N}}\max\{\Dav_{A,m}(C_{p}^{r})+i, \s_{A, \le i}(C_{p}^{r})\}.
\]
Finally,
\[\begin{aligned}
\Dav_{A,m+1}(C_{p}^{r}) & \le  \min_{i\in \mathbb{N}}\max\{ \Dav_{A,m}(C_{p}^{r})+i, \s_{A, \le i}(C_{p}^{r})\}\\
												& \le  \max\{br+\lfloor cr \rfloor, \s_{A, \le \lfloor cr \rfloor}(C_{p}^{r})\}\\
                        & \le  \max\{(b+c)r,(b+c)r\}\\
                        &  = (b+c)r
\end{aligned}\]
showing the claim in 1.

Now,  let $\varepsilon > 0$ and assume the conditions in 2. are fulfilled. Since $\frac{b+c-1}{p+c}\ge f(\frac{c}{b+c})$, we get that (the left hand-side increases while the right-hand side does not increase)
\[
\frac{b+c +\varepsilon/2 - 1}{b+c+\varepsilon/2} > \frac{b+c  - 1}{b+c} \ge f \left(\frac{c}{b+c} \right) \ge f \left( \frac{c+\varepsilon/2}{b+c+\varepsilon/2} \right).
\]
As above we thus get, for sufficiently larger $r$, that
\[
\s_{A, \le \lfloor (c +\varepsilon/2)  r \rfloor}(C_{p}^{r})\le n=\lfloor (b+(c +\varepsilon/2))r\rfloor \leq (b+(c +\varepsilon/2))r.
\]
A sequence $S$ of length at least $(b + c + \varepsilon)r$ thus contains a subsequence $T$ of length at most $(c + \varepsilon/2) r$ that has $0$ as an   $A$-weighted  sum. Since the length of $T^{-1}S$ is at least $(b + \varepsilon/2)r$ and since we assumed  $\Dav_{A,m}(C_{p}^{r}) \lesssim b r$ it follows that (if $r$ is sufficiently large) the sequence  $T^{-1}S$ admits $m$ disjoint $A$-weighted zero-subsums. Thus, $S$ admits $m+1$ disjoint $A$-weighted zero-subsums, showing that
$\Dav_{A,m+1}(C_{p}^{r}) \le (b + c + \varepsilon)r $ for all sufficiently large $r$. Consequently, $\Dav_{A,m+1}(C_{p}^{r}) \lesssim (b + c)r$.
\end{proof}

We use the just established lemma in two ways. First, we give bounds for $\Dav_{A,m}(C_{p}^{r})$ for small $p$ and $m$ yet large $r$. Then, in Theorem \ref{upper_bound_th} we investigate the asymptotic behavior of $\Dav_{A,m}(C_{p}^{r})$ for large $m$ and $r$; recall that we studied the problem for fixed $r$ and large $m$ in Theorem \ref{thm_arithprogr}.

Of course, to apply Lemma \ref{lem_induct} we need some asymptotic $p$-upper bounding function. We recall some asymptotic bounds on the parameters of codes that we use (see for example \cite[Section 2.10]{huffman-pless}). For a prime $p$ and $0 < x \le (p-1)/p$, let
\[h_p (x) =  -x \log_p (x/(p-1)) - (1 - x) \log_p (1-x) \]
and $h_p(x)=0$, denote the $p$-ary entropy function.
The following functions are $p$-upper bounding functions on $[0, (p-1)/p]$:
\begin{enumerate}
\item \[1 - h_p \left( \frac{x}{2} \right) \] by the asymptotic Hamming bound.
\item \[1 -  h_p \left( \frac{p-1}{p} -  \sqrt{\frac{p-1}{p} \left( \frac{p-1}{p} - x \right)} \right) \] by the asymptotic Elias bound.
\item \[h_p \left( \frac{ p - 1 - (p - 2) x - 2 \sqrt{(p - 1) x (1-x) }}{p} \right) \] by the first MRRW bound.
\end{enumerate}

We now formulate the result for small $p$ and $m$; as the proof shows, we could obtain similar results for further values.  We recall from \eqref{eq_davA} that $\Dav_{A,1}(C_{p}^{r})=r+1$, which is why we do not include this case. Moreover, the case $p=2$ was considered in \cite{PS} and we do not repeat the result.

\begin{theorem}
\label{thm_asymp_small}
For each sufficiently large integer $r$ we have:
\begin{enumerate}
\item 
\[
\begin{aligned}
 1.365\,r\leq & \Dav_{A,2}(C_{3}^{r})& \leq 1.549\,r\\
 1.693\,r\leq & \Dav_{A,3}(C_{3}^{r})& \leq 2.085\,r\\
 2\,r\leq & \Dav_{A,4}(C_{3}^{r})& \leq 2.610\,r\\
 2.290\,r\leq & \Dav_{A,5}(C_{3}^{r})& \leq 3.112\,r
\end{aligned}
\]
 \item \[
\begin{aligned}
 1.464\,r\leq & \Dav_{A,2}(C_{5}^{r})& \leq 1.699\,r\\
 1.882\,r\leq & \Dav_{A,3}(C_{5}^{r})& \leq 2.397\,r\\
 1.272\,r\leq & \Dav_{A,4}(C_{5}^{r})& \leq 3.065\,r\\
 2.643\,r\leq & \Dav_{A,5}(C_{5}^{r})& \leq 3.707\,r
\end{aligned}
\]
\item \[
\begin{aligned}
 1.517\,r\leq & \Dav_{A,2}(C_{7}^{r})& \leq 1.779\,r\\
 1.982\,r\leq & \Dav_{A,3}(C_{7}^{r})& \leq 2.563\,r\\
 2.418\,r\leq & \Dav_{A,4}(C_{7}^{r})& \leq 3.311\,r\\
 2.833\,r\leq & \Dav_{A,5}(C_{7}^{r})& \leq 4.032\,r
\end{aligned}
\]
\end{enumerate}
\end{theorem}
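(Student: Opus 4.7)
The lower bounds are a direct consequence of Proposition \ref{propgeneral}. For each pair $(p,m)$ in the tables it suffices to compute $(\log p)\cdot m/\log(1+m(p-1))$ numerically and verify that it is at least the stated constant. For instance, $(\log 3)\cdot 4/\log 9 = 2$ exactly, which explains the clean entry in the third line of part~(1); the remaining entries are standard numerical evaluations of the same formula.

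For the upper bounds, the plan is to iterate part~(2) of Lemma~\ref{lem_induct}. By \eqref{eq_davA} we have $\Dav_{A,1}(C_p^r)=r+1\lesssim r$, so we may start with $b_1=1$. Assuming inductively that $\Dav_{A,m}(C_p^r)\lesssim b_m r$, choose any $c_m>0$ satisfying
\[
\frac{b_m+c_m-1}{b_m+c_m}\;\ge\;f\!\left(\frac{c_m}{b_m+c_m}\right)
\]
for some asymptotic $p$-upper-bounding function $f$, and set $b_{m+1}:=b_m+c_m$. Lemma~\ref{lem_induct}(2) then delivers $\Dav_{A,m+1}(C_p^r)\lesssim b_{m+1}r$, and iterating this step four times produces the upper bounds in each of the twelve cases.

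To make $b_{m+1}$ as small as possible, at each step one tests the three asymptotic $p$-upper-bounding functions recalled just before the theorem---the asymptotic Hamming, asymptotic Elias, and first MRRW bounds---and retains the one that minimises the admissible $c_m$. The optimal choice depends on both $p$ and the current ratio $c_m/(b_m+c_m)$, so the selection has to be performed case by case.

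The main obstacle is not conceptual but numerical: for each $(p,m)$ the smallest feasible $c_m$ is determined by a transcendental inequality involving the $p$-ary entropy function $h_p$, so $f$ must be evaluated with enough precision that rounding errors do not accumulate over the four iterations and spoil the three-decimal constants displayed. One should also note that $b_1=1$ is a legitimate starting value in the asymptotic formulation of Lemma~\ref{lem_induct}(2), which is immediate from the fact that $\Dav_{A,1}(C_p^r)=r+1\le(1+\varepsilon)r$ for any $\varepsilon>0$ and all sufficiently large $r$.
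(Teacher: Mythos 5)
Your proposal is correct and follows essentially the same route as the paper: lower bounds read off numerically from Proposition \ref{propgeneral}, upper bounds obtained by iterating Lemma \ref{lem_induct} four times from $m=1$, at each step testing the asymptotic Hamming, Elias, and first MRRW bounds and keeping the smallest admissible increment $c_m$. The only (immaterial) variation is that the paper starts from $b_1=1.001$ and uses the strict inequality of Lemma \ref{lem_induct}(1) to get a genuine ``$\leq$ for large $r$'' at every step, whereas you start from $b_1=1$ with part (2) and rely on the displayed constants being rounded up to convert the resulting $\lesssim$ into the stated inequalities.
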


\begin{proof} The lower bounds are merely derived from Proposition \ref{propgeneral}, rounding \emph{down} the exact value. For the upper bounds we apply repeatedly Lemma \ref{lem_induct}. Since $\Dav_{A,1}(C_{p}^{r})=r+1$, for any fixed $b>1$, we have $\Dav_{A,1}(C_{p}^{r}) < br $; we take $b_1=1.001$ as starting value.  Numerically, we find a solution $c_1^f$ to the inequality $\frac{b_1+ c - 1}{b_1 + c}>f(\frac{c}{b_1 + c})$ for $f$ one of the upper-bounding functions mentioned above; in practice we find a solution of the inequality and round it \emph{up}. We then know $\Dav_{A,2}(C_{p}^{r}) \le (b_1 + c_{1,f})r$.  For $f$ the upper-bounding function that yields the smallest $c_{1,f}$, we set $b_2 = b_1 + c_{1,f}$. (In fact, in this case this is always the first MRRW bound but later it can also be the asymptotic Elias bound.) We have $\Dav_{A,2}(C_{p}^{r}) \le b_2 r$ and this is the bound we give in the result. We then proceed in the same way to get a bound for  $\Dav_{A,3}(C_{p}^{r})$, and so on.
\end{proof}

We continue by investigating the behavior of $\Dav_{A,m}(C_{p}^{r})$ for large $m$ and $r$.

\begin{theorem}
\label{upper_bound_th}
Let $p$ be a prime number and $A=\{1, \cdots , p-1\}$. When $m$ tends to infinity, we have
\[
\limsup_{r \rightarrow +\infty} \frac{\Dav_{A,m}(C_{p}^{\,r})}{r} \lesssim 2\log p\frac{m}{\log m}.
\]
\end{theorem}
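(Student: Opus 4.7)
The strategy is to iterate Lemma \ref{lem_induct}(2) starting from $\Dav_{A,1}(C_p^r) = r+1$, using the asymptotic Hamming bound $f(x) = 1 - h_p(x/2)$ as the $p$-upper-bounding function. Define a sequence by $b_1 = 1$ and $b_{m+1} = b_m + c_m$, where $c_m$ is the smallest positive solution of
\[
h_p\!\left(\frac{c_m}{2(b_m+c_m)}\right) \ge \frac{1}{b_m+c_m},
\]
which is the defining inequality of Lemma \ref{lem_induct}(2) for the Hamming choice of $f$. By induction this gives $\limsup_{r \to \infty} \Dav_{A,m}(C_p^r)/r \le b_m$ for every $m \ge 1$, so the task reduces to proving the asymptotic $b_m \sim 2\log p \cdot m/\log m$ as $m \to \infty$.

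To estimate $c_m$ for large $b_m$, I would use the standard expansion $h_p(y) \sim y\log_p(1/y)$ as $y \to 0^+$. Setting $y_m = c_m/(2(b_m+c_m))$, the defining inequality becomes, to leading order, $y_m\log_p(1/y_m) \ge (1+o(1))/(b_m+c_m)$. Substituting the ansatz $y_m = \alpha/(b_m\log b_m)$ and letting $b_m \to \infty$ shows that the threshold is $\alpha = 1$, whence
\[
c_m \sim \frac{2\log p}{\log b_m}.
\]

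The final step is to solve the asymptotic recursion $b_{m+1} - b_m \sim 2\log p/\log b_m$ by comparison with the ODE $b'(x) = 2\log p/\log b(x)$. Separating variables and integrating yields $b\log b - b \sim 2\log p \cdot x$, which inverts to $b(x) \sim 2\log p \cdot x/\log x$. A discrete-to-continuous comparison---most cleanly by introducing $S_m = b_m\log b_m$ and computing $S_{m+1} - S_m \sim c_m\log b_m + O(c_m) \sim 2\log p$, so that $S_m \sim 2\log p \cdot m$---then gives $b_m \sim 2\log p \cdot m/\log m$, which is the claim.

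The main obstacle is propagating the asymptotic analysis rigorously through the iteration: one must ensure that the $o(1)$ error in the expansion of $h_p$ near $0$ and the errors accumulated over $m$ steps of the recursion do not spoil the leading constant $2\log p$. The cleanest way around this is to fix $\varepsilon > 0$, replace the asymptotic inequality for $c_m$ by the stronger $c_m \le (1+\varepsilon)\cdot 2\log p/\log b_m$ (which is valid once $m$ is sufficiently large), and derive from the monotone recursion that $\limsup_{m \to \infty} b_m \log m /m \le (1+\varepsilon)\cdot 2\log p$; letting $\varepsilon \to 0$ completes the argument.
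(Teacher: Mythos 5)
Your proposal is correct and follows essentially the same route as the paper: iterate Lemma \ref{lem_induct} with the asymptotic Hamming bound, show the increment satisfies $c_m \sim 2\log p/\log b_m$, and sum the recursion. The only (minor) divergence is at the final step, where the paper sidesteps solving the recursion $b_{m+1}-b_m\sim 2\log p/\log b_m$ directly by importing the lower bound $V_m \gg m/\log m$ from Proposition \ref{propgeneral} to justify $\log V_m \gtrsim \log m$ and then summing $\sum_k 1/\log k$, whereas you close it self-containedly via the substitution $S_m=b_m\log b_m$; both arguments are sound.
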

\begin{proof}
We apply Lemma \ref{lem_induct} with the function
\[
1 - h_{p}\left( \frac{x}{2} \right),
\]
which is  an asymptotic $p$-upper bounding function by the asymptotic Hamming bound (see above).

Recursively, we define a sequence  $(v_{m})_{m\in \mathbb{N}}$. We set  $v_1=1$ and we define $v_{m+1}$ as the smallest positive real such that (where $V_{m}=v_{1}+\dots +v_{m}$)
\begin{equation}
\label{def_vj}
\frac{1}{V_{m}+v_{m+1}}=h_{p}\left(\frac{v_{m+1}}{2(V_{m}+v_{m+1})}\right).
\end{equation}
We note that this is well-defined and that $v_m \le 2 (p-1) / p$ for each $m$; recall  that $h_p$ is convex and attains its maximum of $1$ at $(p-1)/p$.

Note that by  Lemma \ref{lem_induct}  $\Dav_{A,m}(C_{p}^{r}) \lesssim V_mr$.  Observe that from this and Proposition \ref{propgeneral} it follows that $V_m \gg m/\log m$ for $m \to \infty$.

We proceed to investigate the thus defined quantities. Multiplying \eqref{def_vj} by  $(2 \log p) (V_m + v_{m+1})/v_{m+1}$, gives
\[
\begin{aligned}
& \frac{2\log p}{v_{m+1}}  = \\
& -\log\left(\frac{v_{m+1}}{2(V_{m}+v_{m+1})(p-1)}\right)  + \left(  1-\frac{  2(V_{m}+v_{m+1}) }{ v_{m+1}}   \right ) \log\left(1-\frac{v_{m+1}}{2(V_{m}+v_{m+1})}\right) = \\
& - \log\left(\frac{v_{m+1}}{2(V_{m}+v_{m+1})(p-1)}\right)  + O(1)
\end{aligned}
\]
where we used that $( 1 - y ) ( \log (1 - y^{-1}) )= O(1)$ for $y \to \infty$ and that $v_m$ is bounded while $V_m \to \infty $ as $m \to \infty$.
Consequently, for the second equality using again that $v_m$ is bounded while $V_m \to \infty $ as $m \to \infty$,
\[
\frac{2\log p}{v_{m+1}}= \log(V_{m} + v_{m+1})-\log(v_{m+1}) + O(1)  = \log(V_{m}) - \log(v_{m+1}) + O(1).
\]
If follows that, as $m$ tends to infinity,
\begin{equation}
\label{eq_asymp_vm+1}
v_{m+1} \sim \frac{2\log p}{\log V_{m}}.
\end{equation}

Using again $V_m \gg m/\log m$, it follows that 
\[
V_{m+1}-V_{m}=v_{m+1} \lesssim \frac{2\log p}{\log m}
\]
and therefore, summing all these estimates yields
\[
V_{m} \lesssim 2\log p \sum_{k=1}^{m-1}\frac{1}{\log k} \sim 2\log p\frac{m}{\log m}
\]
establishing the claimed upper bound.
\end{proof}

Combining the lower and the upper bound for $\Dav_{A,m}(C_p^r)$ from Proposition \ref{propgeneral} and the theorem above we get that
\[ \log p\frac{m}{\log m}  \lesssim  \limsup_{r \rightarrow +\infty} \frac{\Dav_{A,m}(C_{p}^{\,r})}{r} \lesssim 2\log p\frac{m}{\log m}.\]
The lower bound seems more likely to give the correct growth. For some discussion of this in the case of $p=2$, we refer to \cite{PS}.

\section*{Acknowledgment} The authors would like to thank the referees for numerous detailed remarks and suggestions.

\end{document}